\documentclass
{article}

\title{The addition theorem for locally monotileable monoid actions}
\date{}

\usepackage{amssymb,amsmath,tikz,amsthm}
\usepackage{enumerate,hyperref}
\usepackage[all]{xypic}
\usepackage{vmargin}
%
\setmarginsrb{10mm}{18mm}{10mm}{13mm}%
    {0mm}{0mm}{0mm}{10mm}

\usepackage{xspace}

\usepackage{centernot}

\newtheorem{lemma}{Lemma}[section]
\newtheorem{proposition}[lemma]{Proposition}
\newtheorem{theorem}[lemma]{Theorem}
\newtheorem{corollary}[lemma]{Corollary}

\newtheorem{problem}[lemma]{Problem}
\newtheorem{fact}[lemma]{Fact}
\newtheorem{claim}[lemma]{Claim}
\newtheorem{question}[lemma]{Question}

\theoremstyle{definition}
\newtheorem{definition}[lemma]{Definition}
\newtheorem{remark}[lemma]{Remark}
\newtheorem{example}[lemma]{Example}

\def\N{\mathbb N}

\def\Z{\mathbb Z}
\def\Q{\mathbb Q}

\def\P{\mathcal P}
\def\Pf{\P_{fin}}

\def\Aut{\mathrm{Aut}}

\newcommand{\halg}{h_{alg}}

\newcommand{\M}{\mathfrak M}



\newcommand*{\card}[1]{\left\vert #1 \right\vert}

\newcommand*{\set}[1]{\left\{ #1 \right\}}



\newcommand{\rest}{\mathbin\restriction}



\newcommand{\Folner}{F\o lner\xspace} 

\newcommand{\eps}{\varepsilon}

\def\tileable{locally monotileable}
\def\MT{\mathrm{MT}}
\def\MTA{\mathrm{MTA}}

\numberwithin{equation}{section}

\author{Dikran Dikranjan, Antongiulio Fornasiero, Anna Giordano Bruno, Flavio Salizzoni}
%
%

\newlength{\bibitemsep}\setlength{\bibitemsep}{.0\baselineskip plus .0\baselineskip minus .0\baselineskip}
\newlength{\bibparskip}\setlength{\bibparskip}{0pt}
\let\oldthebibliography\thebibliography
\renewcommand\thebibliography[1]{%
  \oldthebibliography{#1}%
  \setlength{\parskip}{\bibitemsep}%
  \setlength{\itemsep}{\bibparskip}%
}

\begin{document}

\maketitle

\abstract{
We prove an instance of the so-called Addition Theorem for the algebraic entropy of actions of cancellative right amenable monoids $S$ on discrete abelian groups $A$ by endomorphisms, under the hypothesis that $S$ is locally monotileable (that is, $S$ admits a right F\o lner sequence $(F_n)_{n\in\N}$ such that $F_n$ is a monotile of $F_{n+1}$ for every $n\in\N$).
We study in details the class of locally monotileable groups, also in relation with already existing notions of monotileability for groups, 
introduced by Weiss \cite{Weiss} and developed further by other authors recently.
}
\\

\noindent {\footnotesize 2010 Mathematics Subject Classification: {\sl Primary 20K30, 20M20; Secondary 37A35, 37B40, 43A07.}}

\noindent{\footnotesize Keywords: {\sl algebraic entropy, amenable semigroup, amenable monoid, group endomorphism, semigroup action, monotileable, congruent monotileable, locally monotileable.}}

\tableofcontents

\section{Introduction}

After a very brief and schematic introduction by Adler, Konheim and McAndrew \cite{AKM}, the algebraic entropy for endomorphisms of abelian groups was gradually developed by Weiss \cite{W} and Peters  \cite{P1,P2}. The interest in this direction increased after \cite{DGSZ}, where a rather complete description in the case of torsion abelian groups was obtained. 
The algebraic entropy defined by Peters  \cite{P1} for automorphisms of arbitrary abelian groups was suitably extended to arbitrary endomorphisms of abelian groups in \cite{DGB0} (see also \cite{DGB}); this entropy is denoted by $h_{alg}$ in the sequel. 
On the other hand, appropriate versions of the algebraic entropy  for module endomorphisms were introduced by Salce and Zanardo \cite{SZ1} and studied further by Salce, V\'amos and Virili \cite{SVV}, also in connection with length functions in the sense of Northcott and Reufel. 
Recently, Virili \cite{V3} extended this algebraic entropy to amenable group actions on modules and found applications to the Stable Finiteness Conjecture and the Zero Divisors Conjecture, originally stated by Kaplansky. These ideas were pushed further by Li and Liang \cite{LL}. 

Let $S$ be a cancellative right amenable semigroup, $A$ an abelian group, and $S\overset{\alpha}{\curvearrowright} A$ a left action by endomorphisms. In \cite{DFG-amac}, inspired by the recent results and definitions of Ceccherini-Silbertstein, Coornaert and Krieger \cite{CCK}, the algebraic entropy $\halg$ was extended to such actions $\alpha$ as follows. For a non-empty subset $X$ of $A$ and for every $F\in\Pf(S)$, let 
$$T_F(\alpha,X)=\sum_{s\in F}\alpha(s)(X)$$
be the \emph{$\alpha$-trajectory of $X$ with respect to $F$}. The \emph{algebraic entropy of $\alpha$ with respect to $X\in\Pf(A)$} is 
$$H_{alg}(\alpha,X)=\lim_{i\in I}\frac{\ell(T_{F_i}(\alpha,X))}{|F_i|},
$$ where $(F_i)_{i\in I}$ is a right F\o lner net of $S$. The limit defining  $H_{alg}(\alpha,X)$ exists and does not depend on the choice of the right F\o lner net $(F_i)_{i\in I}$ in view of \cite[Theorem 1.1]{CCK}. The \emph{algebraic entropy of $\alpha$} is 
$$h_{alg}(\alpha)=\sup\{H_{alg}(\alpha,X)\colon X\in \Pf(A)\}.$$

This definition of algebraic entropy coincides with that for single endomorphisms (mentioned above) when those are considered as left $\N$-actions. Moreover, for amenable group actions on discrete abelian groups it coincides with the algebraic entropy defined in \cite{V2} on locally compact abelian groups. 

\medskip 
A fundamental property of $h_{alg}$, established in \cite{DGB0} (and in \cite{DGSZ} for torsion abelian groups), is the so-called Addition Theorem (or Yuzvinski's addition formula):

\begin{theorem}[Addition Theorem]
Let $A$ be an abelian group, $\phi:A\to A$ an endomorphism and $B$ a $\phi$-invariant subgroup of $G$. Then $h_{alg}(\phi)=h_{alg}(\phi\restriction_B)+h_{alg}(\bar\phi)$, where $\bar\phi:A/B\to A/B$ is the endomorphism induced by $\phi$.
\end{theorem}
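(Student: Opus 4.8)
The plan is to fix the canonical projection $\pi\colon A\to A/B$, to write $\psi=\phi\restriction_B$, and to reduce the statement to the two inequalities between $\halg(\phi)$ and $\halg(\psi)+\halg(\bar\phi)$. Throughout I write $T_n(\phi,X)=X+\phi(X)+\dots+\phi^{n-1}(X)$ for the trajectory relative to the \Folner sequence $F_n=\{0,1,\dots,n-1\}$ of $\N$, so that $\Halg(\phi,X)=\lim_n \ell(T_n(\phi,X))/n$. The elementary inputs I would record first are the two \emph{monotonicity} facts $\halg(\phi)\ge\halg(\psi)$ and $\halg(\phi)\ge\halg(\bar\phi)$: the former because $T_n(\phi,W)=T_n(\psi,W)$ for every finite $W\subseteq B$ (as $B$ is $\phi$-invariant), and the latter because $\pi(T_n(\phi,X))=T_n(\bar\phi,\pi(X))$, so lifting any finite $Y\subseteq A/B$ to a finite $X\subseteq A$ with $\pi(X)=Y$ gives $\ell(T_n(\phi,X))\ge\ell(T_n(\bar\phi,Y))$.

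For the upper bound I would analyse the fibres of $\pi$ on the finite set $T_n(\phi,X)$. Since $\phi$ is additive one has $T_n(\phi,X)-T_n(\phi,X)=T_n(\phi,X-X)$, so each non-empty fibre $T_n(\phi,X)\cap\pi^{-1}(y)$ is a translate of a subset of $B\cap T_n(\phi,X-X)$; summing the fibre cardinalities over the $\card{T_n(\bar\phi,\pi(X))}$ base points yields
$$\ell(T_n(\phi,X))\le \ell(T_n(\bar\phi,\pi(X)))+\ell\bigl(B\cap T_n(\phi,X-X)\bigr).$$
Dividing by $n$ and letting $n\to\infty$, the subadditivity $\halg(\phi)\le\halg(\bar\phi)+\halg(\psi)$ is reduced to the single key estimate $\limsup_n \ell\bigl(B\cap T_n(\phi,X-X)\bigr)/n\le\halg(\psi)$.

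This estimate, together with the matching lower bound $\halg(\phi)\ge\halg(\psi)+\halg(\bar\phi)$, is the main obstacle: the quantity $B\cap T_n(\phi,X-X)$ counts the elements of the \emph{ambient} trajectory falling into $B$, and there is no reason for it to be a trajectory inside $B$, so it cannot be compared with $\halg(\psi)$ by a direct \Folner argument. To get past this I would invoke the standard reduction machinery. Since $\Halg(\phi,X)$ depends only on the countable $\phi$-invariant subgroup generated by $X$ and $\halg$ is continuous along direct limits of $\phi$-invariant subgroups, the problem localises; and since the torsion subgroup $t(A)$ is fully invariant, hence $\phi$-invariant, one may split the short exact sequence along torsion and reduce to the torsion case and the torsion-free case separately, the two being recombined by a diagram chase over the sequences linking $t(B)$, $t(A)$ and $t(A/B)$.

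Finally I would settle the two base cases. For torsion $A$ the Addition Theorem is already available from the combinatorial analysis of \cite{DGSZ}. For torsion-free $A$ I would reduce to finite rank by a further continuity argument, pass to the divisible hull $A\otimes\Q\cong\Q^n$ (on which $\halg$ is preserved), and invoke the Algebraic Yuzvinski Theorem identifying $\halg(\phi)$ with the logarithm of the Mahler measure of the characteristic polynomial $p_\phi$ of $\phi$ on $\Q^n$. Since $p_\phi=p_\psi\cdot p_{\bar\phi}$ by the factorisation of characteristic polynomials across $0\to B\otimes\Q\to A\otimes\Q\to(A/B)\otimes\Q\to0$, and the Mahler measure is multiplicative, the logarithms add and the equality $\halg(\phi)=\halg(\psi)+\halg(\bar\phi)$ follows; this simultaneously supplies the key estimate and the lower bound left open above. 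I expect the genuinely hard ingredient to be the Algebraic Yuzvinski Theorem in the torsion-free case, with the torsion/torsion-free recombination as the most delicate bookkeeping.
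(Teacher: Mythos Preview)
Your outline is essentially the classical proof from \cite{DGB0}: localise to countable invariant subgroups, split along the fully invariant torsion subgroup, invoke \cite{DGSZ} for the torsion case and the Algebraic Yuzvinski Theorem (Mahler measure) for the torsion-free finite-rank case, and then recombine. This is a valid route, but note that the recombination you describe as ``a diagram chase over the sequences linking $t(B)$, $t(A)$ and $t(A/B)$'' is genuinely delicate, since $t(A)/t(B)$ need not equal $t(A/B)$ (take $A=\Z$, $B=2\Z$); the actual argument in \cite{DGB0} requires several intermediate reductions beyond what you sketch. Your initial fibre-counting paragraph is also ultimately idle: once you appeal to \cite{DGSZ} and Yuzvinski you obtain the full equality in each base case directly, so the key estimate $\limsup_n \ell(B\cap T_n(\phi,X-X))/n\le \halg(\psi)$ is never established independently, only recovered a posteriori.

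The paper takes a completely different and much shorter route. It derives the classical statement as the special case $S=\N$ of Theorem~\ref{ATintro}, using that $\N$ is \tileable\ (Example~\ref{esemono}). The engine of that proof is the relative covering number $\ell(X,Y)=\log\mu(X,Y)$ together with Proposition~\ref{D10}: along a \tileable\ F\o lner sequence the quotients $\ell(T_{F_n}(\alpha,X))/|F_n|$ and $\ell(T_{F_n}(\alpha,X),T_{F_n}(\alpha,Y))/|F_n|$ are \emph{decreasing}, so limits can be replaced by values at a fixed large $n$. Both inequalities then follow from elementary properties of $\ell(-,-)$ (Lemma~\ref{lem:ell}, Proposition~\ref{D7}) with no appeal to the structure of $A$ and no Yuzvinski-type input. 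The payoff of the paper's approach is that it is structure-free and immediately yields the Addition Theorem for arbitrary \tileable\ monoid actions; the payoff of your approach is the explicit Mahler-measure formula in the finite-rank torsion-free case, at the cost of a long case analysis and a deep number-theoretic black box.
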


This result was generalized to locally finite groups that are either quasihamiltonian or FC-groups in \cite{GBSal}, while a counterexample to the Addition Theorem in the non-abelian case was given in \cite{GBSp}.

Moreover, an Addition Theorem was provided in \cite{SVV,SZ1} for the algebraic entropy of module endomorphisms under suitable conditions. This was extended in \cite{SV} to a more general setting, and to amenable groups actions in \cite{V3}.

In \cite{DFG-amac} we proved an Addition Theorem for actions of cancellative right amenable monoids $S$ on torsion abelian groups $A$. 
Here we prove it for all abelian groups $A$ under the hypotheses that $S$ is also countable and \tileable\ in the sense of Definition~\ref{til-def}:

\begin{theorem}[Addition Theorem]\label{ATintro}
Let $S \overset{\alpha}\curvearrowright A$ be a left action of a locally monotileable cancellative right amenable monoid $S$ on an abelian group $A$.
Let $B$ be an $\alpha$-invariant subgroup of $A$, and denote by $\alpha_{A/B}$ and $\alpha_B$ the induced actions of $S$ on $A/B$ and on $B$, respectively. Then
\[\halg(\alpha) = \halg(\alpha_{A/B})+ \halg(\alpha_B).\]
\end{theorem}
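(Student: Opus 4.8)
The plan is to establish the two inequalities $\halg(\alpha)\ge\halg(\alpha_{A/B})+\halg(\alpha_B)$ and $\halg(\alpha)\le\halg(\alpha_{A/B})+\halg(\alpha_B)$ separately, computing every entropy along one fixed right \Folner sequence $(F_n)_{n\in\N}$ witnessing local monotileability; this is legitimate since, by \cite[Theorem 1.1]{CCK}, $H_{alg}$ is independent of the chosen right \Folner net. Throughout I write $\pi\colon A\to A/B$ for the canonical projection and use the intertwining identities $\pi\circ\alpha(s)=\alpha_{A/B}(s)\circ\pi$ and $\alpha_B(s)=\alpha(s)\restriction_B$, valid because $B$ is $\alpha$-invariant, together with the elementary additivity of $T_F$ under Minkowski sums of seeds, which gives $T_F(\alpha,X)-T_F(\alpha,X)=T_F(\alpha,X-X)$ and $\pi(T_F(\alpha,X))=T_F(\alpha_{A/B},\pi(X))$.

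For the lower bound I would use a \emph{product seed}. Given finite $\bar X\subseteq A/B$ and $X_B\in\Pf(B)$, choose a finite lift $X_0\subseteq A$ with $\pi(X_0)=\bar X$ and set $X=X_0+X_B\in\Pf(A)$. Projecting, $\pi(T_{F_n}(\alpha,X))=T_{F_n}(\alpha_{A/B},\bar X)$, so the image has exactly $|T_{F_n}(\alpha_{A/B},\bar X)|$ elements; over each such $\bar y$, fixing one family of lifts realizing $\bar y$ and then letting the $X_B$-coordinates vary freely shows that the fibre contains a translate of $T_{F_n}(\alpha_B,X_B)\subseteq B$, hence has at least $|T_{F_n}(\alpha_B,X_B)|$ elements. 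Summing over fibres gives the clean product estimate
\[
|T_{F_n}(\alpha,X)|\ \ge\ |T_{F_n}(\alpha_{A/B},\bar X)|\cdot|T_{F_n}(\alpha_B,X_B)| ,
\]
and dividing by $|F_n|$ and passing to the limit yields $H_{alg}(\alpha,X)\ge H_{alg}(\alpha_{A/B},\bar X)+H_{alg}(\alpha_B,X_B)$. Taking suprema over $\bar X$ and $X_B$ independently gives $\halg(\alpha)\ge\halg(\alpha_{A/B})+\halg(\alpha_B)$. Notably, this direction uses neither countability nor monotileability.

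For the upper bound I start from the dual fibre estimate: for any $Y\in\Pf(A)$ one has $|Y|\le|\pi(Y)|\cdot|(Y-Y)\cap B|$, since each fibre of $\pi\restriction_Y$, after translation by one of its elements, embeds into $(Y-Y)\cap B$. Applying this to $Y=T_{F_n}(\alpha,X)$ and writing $W=X-X$ reduces the whole inequality to the single subadditivity statement
\[
\limsup_{n}\frac{1}{|F_n|}\log\bigl|T_{F_n}(\alpha,W)\cap B\bigr|\ \le\ \halg(\alpha_B),
\]
the quotient contribution $H_{alg}(\alpha_{A/B},\pi(X))\le\halg(\alpha_{A/B})$ being immediate. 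This estimate is the heart of the theorem and the only place where local monotileability enters: the elements of $T_{F_n}(\alpha,W)$ that happen to fall in $B$ are \emph{not} a genuine $B$-trajectory, because the individual summands $\alpha(s)(w_s)$ leave $B$ and only their total projection to $A/B$ vanishes, so the non-split extension $0\to B\to A\to A/B\to 0$ contributes a twist that must be shown to carry zero exponential growth.

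The plan for this crux is to exploit the exact nested tilings $F_{n+1}=\bigsqcup_{r}F_n r$ supplied by monotileability. Tiling $F_n$ by translates of a fixed small \Folner set $F_k$ and grouping the summands tile by tile via $\alpha(vt)=\alpha(v)\alpha(t)$, I would build from $W$ and the finitely many maps $\{\alpha(v):v\in F_k\}$ a single finite set $W^{*}\subseteq B$ and a \Folner sequence for $\alpha_B$ so that every $B$-valued $\alpha$-trajectory over $F_n$ is captured, up to a subexponential boundary factor controlled by the exactness of the tiling, by a genuine $\alpha_B$-trajectory of $W^{*}$; letting first $n\to\infty$ and then $k\to\infty$ would bound the $\limsup$ above by $H_{alg}(\alpha_B,W^{*})\le\halg(\alpha_B)$. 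I expect the main obstacle to be exactly this last step: controlling the cross-cancellations in the quotient (the cocycle of the extension) so that intersecting a Minkowski sum of tile-trajectories with $B$ does not produce exponentially more $B$-elements than a single $B$-trajectory. It is precisely the \emph{exactness} of the monotile tilings --- as opposed to the merely approximate Ornstein--Weiss quasitilings available for general amenable monoids --- that should make the accumulated boundary error subexponential and thus close the argument.
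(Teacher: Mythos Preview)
Your lower bound argument is correct and matches the paper's proof in substance.

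The upper bound, however, contains a genuine gap: the inequality you isolate as the crux,
\[
\limsup_{n}\frac{1}{|F_n|}\log\bigl|T_{F_n}(\alpha,W)\cap B\bigr|\ \le\ \halg(\alpha_B),
\]
is \emph{false} in general, so no tiling argument can establish it. Take $S=\N$ with $F_n=\{0,\ldots,n-1\}$, $A=\Z^2$, $B=\Z\times\{0\}$, and $\alpha(1)(x,y)=(2x+y,y)$. Then $\alpha_B$ is multiplication by $2$, so $\halg(\alpha_B)=\log 2$, while $\alpha_{A/B}$ is the identity. For $W=\{(0,0),(0,\pm1)\}$ one has $\alpha(k)(0,1)=(2^k-1,1)$, hence an element of $T_{F_n}(\alpha,W)$ lies in $B$ iff the sign pattern $(\epsilon_k)\in\{-1,0,1\}^n$ satisfies $\sum_k\epsilon_k=0$, and distinct such patterns give distinct first coordinates $\sum_k\epsilon_k 2^k$ by uniqueness of balanced binary representations. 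The number of balanced sign patterns is the central trinomial coefficient $\sim C\,3^n/\sqrt n$, so the limsup equals $\log 3>\log 2$. Your fibre decomposition therefore overcounts: it assigns the entire quotient contribution to the exact term $\ell(\pi(T_{F_n}(\alpha,X)))$ and dumps all the slack into the $B$-term, which then need not be an $\alpha_B$-entropy.

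The paper's remedy is to decompose differently. It introduces the covering number $\mu(X,Y)$ and $\ell(X,Y)=\log\mu(X,Y)$, proves that $n\mapsto \ell(T_{F_n}(\alpha,X),T_{F_n}(\alpha,Y))/|F_n|$ is \emph{decreasing} along a locally monotileable F\o lner sequence (this is exactly where monotileability enters), and then, given $\eps>0$, \emph{first} fixes a large time $M$ at which the quotient term is within $\eps$ of its limit and \emph{only then} chooses a finite $Y\subseteq B$ with $\ell(T_{F_M}(\alpha,X),Y)=\ell(\pi(T_{F_M}(\alpha,X)))$. One then bounds
\[
\ell(T_{F_n}(\alpha,X))\ \le\ \ell\bigl(T_{F_n}(\alpha,X),T_{F_n}(\alpha,Y)\bigr)+\ell\bigl(T_{F_n}(\alpha,Y)\bigr),
\]
where the second summand is a \emph{genuine} $\alpha_B$-trajectory and the first, by the decreasing property, stays below its value at time $M$, hence below $\halg(\alpha_{A/B})+\eps$. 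The point is that the ``quotient'' term $\ell(T_{F_n}(\alpha,X),T_{F_n}(\alpha,Y))$ is deliberately allowed to exceed $\ell(\pi(T_{F_n}(\alpha,X)))$ so that the complementary ``$B$'' term becomes an honest trajectory in $B$; your decomposition forces the opposite trade-off, which cannot work.
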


Since $\N$ is \tileable, as a corollary of Theorem~\ref{ATintro} we find the Addition Theorem from \cite{DGB0}. While the proof in \cite{DGB0} was quite long and heavily used the structure of the abelian group $A$, the proof in the present paper is much shorter and makes no recourse to the structure of $A$. 

The problem on whether the hypothesis ``locally monotileable" can be relaxed in  the Addition Theorem remains open.

\smallskip
In Section~\ref{AT-sec} we prove Theorem~\ref{ATintro} and give its consequence for the topological entropy. In particular, in \S\ref{BT-sec} we offer a background on the topological entropy of (semi)group actions and its connection to the algebraic one by means of Pontryagin duality.

\medskip
In Section~\ref{Tileable-sec} we study the class of countable \tileable\  groups.

\begin{definition}
For subsets $T,V$ of a semigroup $S$, we say that $T$ is a \emph{monotile} of $V$ if there exists a subset $C$ of $S$ such that $\{cT\colon c\in C\}$ is a partition of $V$.
\end{definition}

The notion of monotile was defined in \cite{Weiss}, in the case when $V=G$, in connection to the $\eps$-quasi tilings from \cite{OW0}. 
The interest in monotiles (of $G$) stems from the celebrated Rokhlin Lemma: 

\begin{fact}[Rokhlin Lemma]
Let $T\colon X\to X$ be an invertible measure-preserving transformation on a probability space $(X,\Sigma,\mu)$. We assume $T$ is (measurably) aperiodic, that is, the set of periodic points for $T$ has zero measure. Then for every integer $n\in\mathbb{{N}}_+$ and for every $\varepsilon >0$, there exists a measurable set $E$ such that the sets $E$, $TE$, $\ldots$, $T^{n-1}E$ are pairwise disjoint and such that $\mu (E\cup TE\cup \cdots \cup T^{n-1}E)>1-\varepsilon$.
\end{fact}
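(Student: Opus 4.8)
The plan is to realize $X$, up to a set of measure less than $\varepsilon$, as a disjoint union of \emph{columns} of height exactly $n$ over a common base $E$, each column having the form $E_0, TE_0, \ldots, T^{n-1}E_0$. The standard device for producing such a decomposition is the Kakutani skyscraper attached to a base set of small measure, combined with the Poincaré recurrence theorem. I would first carry out the construction under the assumption that $T$ is ergodic, and then reduce the general aperiodic case to it.

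Assume $T$ is ergodic and fix $n$ and $\varepsilon$. First I would choose a measurable set $A$ with $0<\mu(A)<\varepsilon/n$; such a set exists because aperiodicity forces $\mu$ to be non-atomic (an atom would have to be shared by the whole, necessarily infinite, orbit, contradicting $\mu(X)=1$). By Poincaré recurrence the first-return time $r(x)=\min\{k\geq 1\colon T^k x\in A\}$ is finite for $\mu$-a.e.\ $x\in A$, and by ergodicity $\bigcup_{k\geq 0}T^kA=X$ mod $0$. Partitioning $A$ by return time, $A=\bigsqcup_{k\geq 1}A_k$ with $A_k=\{x\in A\colon r(x)=k\}$, the levels $\{T^jA_k\colon k\geq 1,\ 0\leq j<k\}$ form a partition of $X$ mod $0$, namely the skyscraper over $A$.

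Next I would slice each column into blocks of height $n$. Writing $k=q_k n+s_k$ with $0\leq s_k<n$, I set
\[
E=\bigcup_{k\geq 1}\ \bigcup_{i=0}^{q_k-1}T^{in}A_k .
\]
For $0\leq m<n$ the exponents occurring in $T^mE$ within column $k$ are $in+m$ with $0\leq i<q_k$, i.e.\ precisely the integers $0,1,\ldots,q_kn-1$, each once and all strictly below $k$; since distinct skyscraper levels are disjoint, $E,TE,\ldots,T^{n-1}E$ are pairwise disjoint, so $E$ is a legitimate base of a height-$n$ tower. The uncovered part $X\setminus\bigcup_{j<n}T^jE$ consists exactly of the top $s_k<n$ levels of each column, of total measure $\sum_k s_k\,\mu(A_k)\leq (n-1)\mu(A)<\varepsilon$; hence $\mu(E\cup TE\cup\cdots\cup T^{n-1}E)>1-\varepsilon$.

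Finally, for the general aperiodic (possibly non-ergodic) case I would pass to the ergodic decomposition $\mu=\int\mu_\omega\,d\nu(\omega)$: aperiodicity is inherited by $\mu_\omega$ for $\nu$-a.e.\ $\omega$, and the construction above produces a base $E_\omega$ obeying the same error bound $<\varepsilon$ on each component. Selecting the bases measurably in $\omega$ and gluing yields a global $E$, and integrating the uniform component-wise bound against $\nu$ gives $\mu(X\setminus\bigcup_{j<n}T^jE)<\varepsilon$. The main obstacle is exactly this last step: making the skyscraper construction uniform and measurable across the ergodic components, which is technical but introduces no new idea. An alternative that avoids ergodic decomposition is to use aperiodicity to produce, for any prescribed $\delta>0$, a base set of measure $<\delta$ meeting $\mu$-a.e.\ orbit (a \emph{complete section}), after which the skyscraper argument applies verbatim; the difficulty then migrates into constructing such a small complete section.
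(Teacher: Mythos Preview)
Your argument is the standard Kakutani skyscraper proof of the Rokhlin Lemma and is essentially correct. However, note that the paper does not prove this statement at all: it is stated there as a \emph{Fact} --- a classical result quoted for motivation --- with no proof given. So there is nothing in the paper to compare your argument against; you have supplied a proof where the paper simply cites the result.

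One small point worth tightening: in the ergodic case you need that a set $A$ with $0<\mu(A)<\varepsilon/n$ exists, and you deduce this from non-atomicity of $\mu$, which in turn you derive from aperiodicity. That deduction is fine but deserves a word: if $\mu$ had an atom $\{x\}$, then the orbit points $T^kx$ would be atoms of the same mass, and aperiodicity forces them to be distinct, contradicting $\mu(X)=1$. In the non-ergodic step, the measurable selection of bases $E_\omega$ across the ergodic decomposition is, as you say, the genuinely technical point; your sketch is adequate at the level of a proof outline, and your alternative via a small complete section (which can be produced directly from aperiodicity without invoking the decomposition) is the cleaner route if one wants a self-contained argument.
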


An extension of Rokhlin Lemma for $\Z^d$-actions was proved in \cite{Con} and in \cite{KW}.
A further extension of this result for amenable group actions was announced in \cite{OW0} and then proved in \cite{OW}. More precisely, 
if a countable amenable group $G$ acts freely on a Lebesgue measure space $(X,\mu)$, we say that Rokhlin Lemma holds for a finite subset $F$ of $G$ if for every $\varepsilon>0$ there is a subset $B$ of $X$ such that the sets in $\{fB\colon f\in F \}$ are pairwise disjoint and $\mu(\bigcup_{f\in F}fB)>1-\varepsilon$. Rokhlin Lemma holds for $F$ if and only if $F$ is a monotile of $G$ (see \cite{OW}).

\medskip
For our purpose concerning the Addition Theorem, we need the following special right F\o lner sequences.

\begin{definition}\label{til-def}
Let $S$ be a monoid. A sequence $(F_n)_{n \in \N}$ in $\Pf(S)$ is \emph{locally monotileable} if $F_0=\{1\}$ and $F_n$ is a monotile of $F_{n+1}$ for every $n \in \N$.

A countable right amenable monoid is \emph{locally monotileable} if it admits a locally monotileable right F\o lner sequence.
\end{definition}

Consider a \tileable\  sequence $(F_n)_{n\in\N}$ of $S$. By definition for every $n\in\N_+$ there is a finite subset $K_n$ of $F_n$ such that
$F_n=\bigsqcup_{k\in K_n}kF_{n-1}$ (in particular, $F_n=K_nF_{n-1}$).
Let $K_0=\{1\}$. Since $F_0=\{1\}$, we have that $K_1=F_1$ and so by induction we conclude that, for every $n\in\N$, 
$$F_n=K_n\dots K_1 K_0.$$
The sequence $(K_n)_{n\in\N}$ is the \emph{tiling sequence} associated to the \tileable\  sequence $(F_n)_{n\in\N}$.


\smallskip
Definition~\ref{til-def} is inspired by a notion due to Weiss, that he introduced for groups in \cite{Weiss}:

\begin{definition}
Let $S$ be a monoid.
A countable cancellative right amenable monoid $S$ is \emph{monotileable amenable} (briefly, $\MTA$) if there exists a right F\o lner sequence $(F_n)_{n\in\N}$ of $S$ such that $F_n$ is a monotile of $S$ for every $n\in\N$.
\end{definition}

The following special case of monotileability was introduced in \cite[Definition 4]{CC} in the case of groups, we now give it using our terminology. 

\begin{definition}\label{CCdef}
Let $S$ be a monoid. A sequence $(F_n)_{n\in\N}$ in $\Pf(S)$ is \emph{congruent} if it is locally monotileable and it admits a tiling sequence $(K_n)_{n\in\N}$ with $1\in K_n$ for every $n\in\N$. Moreover, $(F_n)_{n\in\N}$ is \emph{exhaustive} if $\bigcup_{n\in\N}F_n=S$.

A countable right amenable monoid $S$ is \emph{congruent monotileable} if it admits an exhaustive congruent right F\o lner sequence.
\end{definition}

Note that, if $S$ is a cancellative monoid and $(F_n)_{n\in\N}$ is a locally monotileable sequence of $S$ with associated tiling sequence $(K_n)_{n\in\N}$ and $1\in K_n$ for every $n\in\N$, then $(F_n)_{n\in\N}$ is increasing, that is, $F_n\subseteq F_{n+1}$ for every $n\in\N$, and moreover $K_n\subseteq F_{n+1}$ for every $n\in\N$.  Example~\ref{0notin} shows that the converse is not true in general.

\medskip 
For a right F\o lner sequence $(F_n)_{n\in\N}$ of a monoid $S$, the following (non-)implications hold.
$$\xymatrix{
\text{exhaustive congruent} \ar@{=>}[r] \ar@{=>}[d]_{\text{L.\ref{claim*}}} &  \text{congruent} \ar@/_1pc/@{->}[l]|{/}_{\text{Ex.\ref{esemono}}} \ar@{=>}[r] & \text{locally monotileable} \ar@/_1pc/@{->}[l]|{/}_{\text{Ex.\ref{0notin}}} \ar@/_0.5pc/@{->}[lld]|{/}\\
\text{consisting of monotiles} \ar@/_0.5pc/@{->}[rru]|{/}^{\text{Ex.\ref{esemono}}}  & &
}$$

Since an exhaustive congruent sequence of a monoid $S$ consists necessarily of monotiles of $S$ (see Lemma~\ref{claim*}), a countable congruent monotileable monoid is $\MTA$. In particular, the condition of monotileability in \cite[Definition 4]{CC} is redundant. 

For countable right amenable monoids, one has the following implications among the notions of monotileability introduced and recalled above. 
A counterexample witnessing that locally monotileable does not imply $\MTA$ (and so neither congruent monotileble) is given in Example~\ref{New:Example}.

$$\xymatrix{& \text{congruent monotileable}\ar@{=>}[dr]^{\text{L.\ref{claim*}}} \ar@{=>}[dl] & \\
\text{\tileable}\ar@{=>}[dr] \ar@{->}[rr]|{/}^{\text{Ex.\ref{New:Example}}} & & \MTA\ar@{=>}[dl] \\
& \text{amenable} &
}$$

\medskip
Restricting to the case of groups, first note that for groups the notions of local monotileability and congruent monotileability are equivalent (see Proposition~\ref{lm=cm}).

Moreover, recall that Weiss \cite{Weiss} proved that every countable residually finite amenable group is $\MTA$ and that every countable solvable group is $\MTA$. The latter result was extended by Ebli \cite{Ebli}, showing that every countable elementary amenable group is $\MTA$. So, the next related question is very natural.

\begin{question}\label{Ebli-Q}
Is every $\MTA$ group necessarily elementary amenable?
\end{question}

The following very general question by Weiss is open.

\begin{question}[See \cite{Weiss}]\label{Q-Weiss}
Is every countable amenable group necessarily $\MTA$?
\end{question}

In this sense Downarowicz, Huczek and Zhang \cite{DHZ} proved that every countable amenable group is \emph{finitileable}. This term was coined later by Danilenko \cite{Dan}, who gave also a shorter dynamical proof of the same result, as well as a version of Rokhlin Lemma for finitileable groups.

Inspired by the notion from \cite{Dan,DHZ}, Cecchi and Cortez \cite{CC} introduced the notion recalled above of congruent monotileable group. Cortez and Petite proved in \cite{CP} that residually finite amenable groups are congruent monotileable. Using this result, Cecchi and Cortez showed that every countable virtually nilpotent group is congruent monotileable (see \cite[Theorem 1]{CC}). The following questions from \cite{CC}, connected to the general Question~\ref{Q-Weiss}, are open.

\begin{question}[See \cite{CC}]\label{Q-CC}
\begin{enumerate}[(a)]
\item Is every countable amenable group necessarily congruent monotileable?
\item In particular, is every countable $\MTA$ group necessarily congruent monotileable?
\end{enumerate}
\end{question}

In \cite{Weiss}, also the following notion was introduced in the case of groups. 

\begin{definition}
A semigroup $G$ is \emph{monotileable} (briefly, $\MT$) if every finite subset of $S$ is contained in a finite monotile of $S$. 
\end{definition}

While every $\MTA$ group is necessarily $\MT$ and amenable (see Proposition~\ref{MTA->MT}), the validity of the converse implication is not known, and the following question by Weiss is open.

\begin{question}[See \cite{Weiss}] \label{WeissQues}
If a countable group $G$ is $\MT$ and amenable, is $G$ necessarily $\MTA$?
\end{question}

We are not aware if a negative answer of the counterpart of Question~\ref{WeissQues} for cancellative monoids is available. 

\medskip
In order to study the class $\M$ of locally monotileable groups, we consider the following general problem concerning the stability of $\M$ under extension. 

\begin{problem}\label{ticon1}
Consider three countable groups $G$, $H$ and $K$, such that 
$0 \rightarrow H \xrightarrow{\iota} G \xrightarrow{\pi} K \rightarrow 0$
is a short exact sequence of groups. 
\begin{enumerate}[(a)]
\item If $H$ and $K$ are \tileable, is then $G$ \tileable \ as well?
\item What about splitting extensions $ G = H \rtimes K$?
\end{enumerate}
\end{problem}

Moreover, we introduce the following notion of monotileability stronger than local monotileability.

\begin{definition}
Let $G$ be a group, $(F_n)_{n\in\N}$ a sequence of $\Pf(G)$ and $\mathrm{id}\in X \subseteq \Aut(G)$. We say that $(F_n)_{n\in\N}$ is a $X$-\emph{monotileable} sequence if for all $n\in\N$ and $\phi\in X$, we have that $\phi(F_n)$ is a monotile of $F_{n+1}$.
We say that $G$ is $X$-\emph{monotileable} if there exists a $X$-monotileable right F\o lner sequence $(F_n)_{n\in\N}$ of $G$.

When $X=\{\mathrm{id},\psi,\psi^{-1}\}$, we simply write $\psi$-\emph{monotileable.}
\end{definition}

One of our main results is the following partial answer to Problem~\ref{ticon1}.

\begin{theorem}[Extension Theorem]\label{monosequ1}
Consider three countable groups $G$, $K$ and $H$. Suppose that 
\begin{equation*}
0 \rightarrow H \xrightarrow{\iota} G \xrightarrow{\pi} K \rightarrow 0
\end{equation*}
is a short exact sequence. If $K$ is \tileable\ and $H$ is $\mathrm{Inn}(G)$-monotileable, then $G$ is \tileable.
\end{theorem}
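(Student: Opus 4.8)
The plan is to build a locally monotileable right \Folner sequence of $G$ by lifting the tiling structures of $K$ and of $H$ at the same time, absorbing the failure of a section to be multiplicative into a family of ``right defects'' living in $H$. Fix a locally monotileable right \Folner sequence $(P_n)_{n\in\N}$ of $K$ with tiling sequence $(Q_n)_{n\in\N}$, so that $P_0=\set{1}$ and $P_n=\bigsqcup_{c\in Q_n}cP_{n-1}$, and fix an $\mathrm{Inn}(G)$-monotileable right \Folner sequence $(R_m)_{m\in\N}$ of $H$ (so $R_0=\set{1}$). Identify $H$ with $\iota(H)\trianglelefteq G$, choose a section $\sigma\colon K\to G$ of $\pi$ with $\sigma(1)=1$, and for $k\in K$ let $\beta_k\in\Aut(H)$ be the restriction to $H$ of the inner automorphism $\mathrm{conj}_{\sigma(k)}$ of $G$, and let $\gamma(c,k)\in H$ be the cocycle determined by $\sigma(c)\sigma(k)=\gamma(c,k)\,\sigma(ck)$. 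Since $\ker\pi=H$, every $g\in G$ is uniquely $\sigma(k)\iota(h)$, so I record a finite subset $E$ of $G$ by its $K$-support and its $H$-\emph{contents} $S_k\subseteq H$, defined by $E\cap\sigma(k)\iota(H)=\sigma(k)\iota(S_k)$.

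For $P\in\Pf(K)$, $R\in\Pf(H)$ and a family $(\gamma_k)_{k\in P}$ in $H$ put
\[ E(P,R,(\gamma_k))=\bigsqcup_{k\in P}\sigma(k)\,\iota\bigl(\beta_k^{-1}(R\gamma_k)\bigr). \]
The core of the argument is a lemma providing two monotile steps for such sets. The master computation is that left translation by $\iota(d)\sigma(c)$ sends the piece $\sigma(k)\iota(S_k)$ to $\sigma(ck)\,\iota\bigl(\beta_{ck}^{-1}(d\,\gamma(c,k))\,S_k\bigr)$. Taking $c=1$ gives the \textbf{H-step}: if $R$ is a monotile of $R'$ via $D\subseteq H$, then $\set{\iota(d):d\in D}$ tiles $E(P,R,(\gamma_k))$ inside $E(P,R',(\gamma_k))$. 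For the \textbf{K-step}, suppose $P$ is a monotile of $P^+=\bigsqcup_{c\in C}cP$ via $C\subseteq K$, and for each $c\in C$ pick $D_c\subseteq H$ with $R^+=\bigsqcup_{d\in D_c}d\,\beta_c(R)$; such $D_c$ exists because $\beta_c$ is (the restriction of) an element of $\mathrm{Inn}(G)$ and $(R_m)$ is $\mathrm{Inn}(G)$-monotileable. Using the identity $\beta_{ck}\circ\beta_k^{-1}=\mathrm{conj}_{\gamma(c,k)^{-1}}\circ\beta_c$ and applying $\beta_{ck}$ to the content at $ck$, the union over $d\in D_c$ becomes $\bigl(\bigsqcup_{d\in D_c}d\,\beta_c(R)\bigr)\beta_c(\gamma_k)\gamma(c,k)=R^+\gamma^+_{ck}$, where $\gamma^+_{ck}=\beta_c(\gamma_k)\gamma(c,k)$. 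Hence $\set{\iota(d)\sigma(c):c\in C,\ d\in D_c}$ tiles $E(P,R,(\gamma_k))$ inside $E(P^+,R^+,(\gamma^+_{k^+}))$. The decisive point, checked by the same computation (for disjointness and for covering), is that the cocycle only ever produces a \emph{right} translation of the contents, and right translations commute with the left-translation tilings of $H$; this is exactly why the twisting does not obstruct tileability, while the conjugation $\beta_c$ is absorbed by the $\mathrm{Inn}(G)$-monotileability hypothesis.

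Starting from $E_0=E(\set{1},\set{1},(1))=\set{1}$, I then interleave the two steps: at each stage I perform one H-step (advancing the parameter $R_m\to R_{m+1}$) and, along a chosen cofinal set of stages, precede it by a K-step consuming the next tiling set $Q_i$ of $K$. By the lemma each $E_n$ is a monotile of $E_{n+1}$ and $E_0=\set{1}$, so $(E_n)_{n\in\N}$ is a locally monotileable sequence whose $K$-support runs through $(P_n)$ and whose $R$-parameter runs through $(R_m)$.

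It remains to show $(E_n)$ is right \Folner in $G$. Since $\card{E(P,R,(\gamma_k))}=\card P\cdot\card R$ with $K$-support the \Folner set $P$, it suffices that each content $\beta_k^{-1}(R\gamma_k)$ is nearly right-invariant in $H$; a standard fibered \Folner argument over the exact sequence then yields right almost-invariance in $G$. The genuine difficulty, and the step I expect to be the main obstacle, is controlling the right defects, since the recursion $\gamma^+_{ck}=\beta_c(\gamma_k)\gamma(c,k)$ lets them grow as K-steps accumulate. I would handle this by inserting, after each K-step, as many additional pure H-steps as needed so that $R_m$ becomes right-\Folner for the finite set of defects present at that stage and for the finitely many test elements of $G$; this keeps every ratio $\card{E_n s\,\SDiff\,E_n}/\card{E_n}$ small while leaving the monotile chain of the previous paragraph intact. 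Verifying that this interleaving can be carried out uniformly, so that a single sequence is simultaneously \Folner in the $H$- and $K$-directions, is the technical crux of the argument.
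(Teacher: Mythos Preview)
Your construction is the paper's, presented through explicit cocycle bookkeeping rather than the paper's cleaner device of an \emph{associated section}. The paper defines $\tilde\sigma$ on $\bigcup_n P_n$ by $\tilde\sigma(q_n\cdots q_1)=\sigma(q_n)\cdots\sigma(q_1)$ for $q_i\in Q_i$; an easy induction on your recursion $\gamma^+_{ck}=\beta_c(\gamma_k)\gamma(c,k)$ shows $\tilde\sigma(k)=\gamma_k\sigma(k)$, whence $E(P_n,R,(\gamma_k))=\bigsqcup_{k\in P_n}R\gamma_k\sigma(k)=R\,\tilde\sigma(P_n)$, and your H-step and K-step are exactly the two halves of the tiling verification in Claim~\ref{monosequ}. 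What the paper's packaging buys is precisely the step you flag as the crux: once the sets are recognised as $E_m\tilde\sigma(P_n)$ for \emph{some} section, the \Folner property is supplied by the standalone Theorem~\ref{teofolner}, valid for an arbitrary section, where one first passes to a CIF subsequence of $(P_n)$ in $K$ and then chooses the $R$-indices CIF in $H$ relative to the finite sets $\tilde\sigma(P_{k_n})G_n\tilde\sigma(P_{k_n})^{-1}\cap H$. Your proposed insertion of extra H-steps after each K-step is precisely this CIF mechanism, and your observation that the right defects $\gamma_k$ are harmless for the invariance ratios is correct; the paper's separation of the \Folner argument (Theorem~\ref{teofolner}) from the monotileability argument (Lemma~\ref{tile1} plus Claim~\ref{monosequ}) simply makes each half cleaner to write down.
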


Using this result,  we prove one of the main achievements of this paper, that is, Theorem~\ref{vhy}. Recall that a group $G$ is \emph{hypercentral} if its upper central series terminates at the whole group, that is, there exists an ordinal $\alpha$ such that $Z_\alpha(G)=G$; the length of $G$ as an hypercentral group is the minimum such $\alpha$. 

\begin{theorem}\label{vhy}
Every countable virtually hypercentral group of length $<\omega^2$ is locally monotileable (i.e., congruent monotileable). 	
\end{theorem}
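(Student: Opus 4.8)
Since local monotileability and congruent monotileability coincide for groups (Proposition~\ref{lm=cm}), it suffices to exhibit a \tileable\ right F\o lner sequence. The natural engine is the Extension Theorem~\ref{monosequ1}; the catch is that its conclusion is plain local monotileability while the hypothesis on the kernel is the stronger $\mathrm{Inn}(G)$-monotileability, so any induction must carry this stronger property. The plan is therefore to prove, by transfinite induction on the hypercentral length $\alpha<\omega^2$, the relative statement

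(\*) \emph{for every countable virtually hypercentral group $W$ and every normal hypercentral subgroup $N\trianglelefteq W$ of length $\alpha<\omega^2$, the group $N$ is $\mathrm{Inn}(W)$-monotileable.}

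This gives the theorem: if $G$ is countable virtually hypercentral, pick a hypercentral subgroup of finite index and replace it by its normal core $N$. A routine transfinite induction showing $Z_\beta(K)\cap H\le Z_\beta(H)$ for $H\le K$ proves that subgroups of hypercentral groups are hypercentral of no greater length, so $N$ is a normal, finite-index, hypercentral subgroup of $G$ of length $<\omega^2$. By (\*) with $W=G$, $N$ is $\mathrm{Inn}(G)$-monotileable, and since $G/N$ is finite (hence \tileable), Theorem~\ref{monosequ1} yields that $G$ is \tileable.

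For the inductive step of (\*) I would set $N_0:=Z_\omega(N)$. Being characteristic in $N$ and $N\trianglelefteq W$, the subgroup $N_0$ is normal in $W$; the quotient $W/N_0$ is again countable virtually hypercentral, and $N/N_0\trianglelefteq W/N_0$ is hypercentral of length $\alpha'$ with $\omega+\alpha'=\alpha$, whence $\alpha'<\alpha$. This is exactly where the bound $\alpha<\omega^2$ is used: peeling off one copy of $\omega$ strictly decreases the length, and the process halts after finitely many steps. By the inductive hypothesis $N/N_0$ is $\mathrm{Inn}(W/N_0)$-monotileable, in particular locally monotileable. Granting the base case below that $N_0$ is $\mathrm{Inn}(W)$-monotileable, a relative form of the Extension Theorem~\ref{monosequ1} (the same proof, tracking conjugation by the fixed ambient $W$ rather than by the middle group), applied to $0\to N_0\to N\to N/N_0\to 0$, produces an $\mathrm{Inn}(W)$-monotileable sequence for $N$, closing the induction.

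It then remains to treat the base case $\alpha\le\omega$, i.e.\ to show that a normal subgroup $N_0\trianglelefteq W$ that is hypercentral of length $\le\omega$ is $\mathrm{Inn}(W)$-monotileable. Here $N_0=\bigcup_{n\in\N}Z_n(N_0)$ is an increasing union of nilpotent subgroups, each characteristic in $N_0$ and hence normal in $W$. I would first dispose of each nilpotent $Z_n(N_0)$: by induction on the nilpotency class, peeling off the centre $Z(Z_n(N_0))$ (abelian and normal in $W$) via the relative Extension Theorem reduces this to the case of an \emph{abelian} normal subgroup $A\trianglelefteq W$. For such $A$ the conjugation action of $W$ factors through the amenable group $W/C_W(A)$ (a quotient of the amenable group $W$), and one builds a F\o lner sequence of $A$ adapted to this action so that every conjugate of each member is a monotile of its successor; a final diagonal/exhaustion argument then assembles the sequences for the $Z_n(N_0)$ into one $\mathrm{Inn}(W)$-monotileable sequence for the union $N_0$.

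The main obstacle is precisely this base case. Establishing $\mathrm{Inn}(W)$-monotileability of an abelian (and then nilpotent) normal subgroup—rather than the plain local monotileability furnished for virtually nilpotent groups by Cecchi and Cortez~\cite{CC}—demands a F\o lner sequence whose members remain monotiles of their successors \emph{simultaneously} under all conjugation automorphisms, and for length exactly $\omega$ one must glue infinitely many such sequences compatibly. Controlling the amenable conjugation action finely enough to achieve both at once is where the real work lies.
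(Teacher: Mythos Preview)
Your proposal has a genuine gap, and you flag it yourself: the ``base case'' that an arbitrary abelian normal subgroup $A\trianglelefteq W$ is $\mathrm{Inn}(W)$-monotileable. This demands a single right F\o lner sequence $(E_n)$ of $A$ such that \emph{every} $W$-conjugate of each $E_n$ is a monotile of $E_{n+1}$. Nothing of this strength is proved anywhere in the paper; the closest results (Theorem~\ref{expro1}, Corollary~\ref{exco1}) treat only a fixed finite symmetric set of automorphisms, not all of $\mathrm{Inn}(W)$ simultaneously, and the general statement you would need is of a different order of difficulty. Since your entire transfinite induction, as well as the ``relative Extension Theorem'' you invoke, rests on this unproved base, the proposal does not establish the theorem.

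The paper's argument avoids $\mathrm{Inn}$-monotileability altogether and is much shorter. It first proves Corollary~\ref{hypercentral}---that countable hypercentral groups of length $<\omega^2$ are locally monotileable---by an induction along the upper central series that exploits only the \emph{centrality} built into that series: successor steps invoke Corollary~\ref{ticor3} (the central-extension instance of Claim~\ref{monosequ}, i.e.\ condition~(b) rather than condition~(a)), and the limit step $\kappa=m\omega$ uses Theorem~\ref{tileable}, whose hypothesis~(3) again asks only for sections landing in centralizers. Theorem~\ref{vhy} then follows in one line by combining Corollary~\ref{hypercentral} with Proposition~\ref{ticor1} (a countable group with a locally monotileable normal subgroup of finite index is itself locally monotileable). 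The moral: use the centrality the upper central series hands you, via condition~(b) of Claim~\ref{monosequ}, instead of strengthening the inductive hypothesis towards condition~(a).
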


Clearly every nilpotent group is hypercentral. So, as an immediate corollary of Theorem~\ref{vhy} we obtain the above mentioned result from \cite{CC}, that every countable virtually nilpotent group is congruent monotileable. 

\smallskip
Since all countable solvable groups are known to be $\MTA$, and in view of Theorem~\ref{vhy}, it is natural to ask the following.

\begin{question}\label{question}
Are all countable solvable groups \tileable? What about polycyclic groups?
\end{question}

A positive answer to Problem~\ref{ticon1} would also be a positive answer to Question~\ref{question}.  In this context, the next theorem provides an example of a \tileable\ solvable (actually, metabelian) group that is neither virtually nilpotent nor residually finite.

\begin{theorem}\label{tipro3}
For every automorphism $\phi$ of $\Q$, the group $\Q$ and $\Q \rtimes_{\phi}\Z$ are \tileable.	
\end{theorem}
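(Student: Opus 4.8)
The plan is to prove the two assertions separately, obtaining the statement for $\Q$ almost for free and reserving the real work for $G := \Q\rtimes_\phi\Z$. Every automorphism of $\Q$ is multiplication by some $q=a/b\in\Q^*$ (with $\gcd(a,b)=1$), so $\phi(x)=qx$. Since $\Q$ is abelian it is hypercentral of length $1<\omega^2$, so Theorem~\ref{vhy} already gives that $\Q$ is locally monotileable; nonetheless I would reprove this by an explicit ``box'' \Folner sequence, because the same boxes are the engine for the semidirect product. For $G$ one is tempted to invoke the Extension Theorem~\ref{monosequ1} on $0\to\Q\to G\to\Z\to 0$, but this is \emph{not} available: conjugation by $(v,n)\in G$ restricts on the normal copy of $\Q$ to $\phi^n$, so the image of $\mathrm{Inn}(G)$ in $\Aut(\Q)$ is the full cyclic group $\set{\phi^m:m\in\Z}$, and $\Q$ is never $\set{\phi^m:m\in\Z}$-monotileable when $q\neq\pm 1$: if $E$ is a monotile of $E'$ then $E-E\subseteq E'-E'$, whence $q^m(E-E)\subseteq E'-E'$ for all $m$, forcing $E-E=\set0$ since the multiplicative orbit of any nonzero rational is infinite. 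Thus I would construct a locally monotileable right \Folner sequence in $G$ directly.

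The construction uses \emph{fibered} sets $\Phi_n := E_n\times J_n$, where $E_n\in\Pf(\Q)$ and $J_n=\set{0,1,\dots,\ell_n-1}\subseteq\Z$, with $E_0=\set0$, $J_0=\set0$ (so $\Phi_0=\set{1_G}$). Using that right multiplication acts by $(v,j)(w,0)=(v+q^jw,j)$ and $(v,j)(0,1)=(v,j+1)$, right \Folner-ness of $(\Phi_n)$ reduces to: $(E_n)$ is \Folner\ in $\Q$, the $J_n$ are intervals, and the parameters grow compatibly (the box $E_n$ must dwarf $q^j w$ for $j<\ell_n$ and each fixed $g=(w,0)$). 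The decisive point is a clean reduction of the monotile condition in $G$: computing $(u,i)\Phi_n=\bigcup_{j\in J_n}(u+q^iE_n)\times\set{i+j}$, one checks that $\Phi_n$ is a (left-)monotile of $\Phi_{n+1}$ as soon as $\ell_n\mid\ell_{n+1}$ and, for each level-shift $i\in\set{0,\ell_n,2\ell_n,\dots,\ell_{n+1}-\ell_n}$, the set $q^iE_n$ is an \emph{additive} monotile of $E_{n+1}$. Indeed, taking tiling shifts $(u,i)$ with $i$ ranging over these multiples of $\ell_n$ and $u$ over a set $U_i$ tiling $E_{n+1}$ by $q^iE_n$, one sees that each target level of $\Phi_{n+1}$ is hit by exactly one level-block, on which the copies fill $E_{n+1}$ by hypothesis. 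Crucially, the required exponents $i$ are \emph{finitely many and bounded} ($0\le i<\ell_{n+1}$), so the fatal obstruction above is avoided.

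It remains to build the boxes $E_n$ so that these finitely many simultaneous tilings hold while $(E_n)$ stays \Folner\ and exhausts $\Q$. Here I would use the local picture: for $P=\set{p:p\mid ab}$ the ring $\Lambda:=\Z[1/ab]$ embeds as a cocompact lattice in $\R\times\prod_{p\in P}\Q_p$, and multiplication by $q$ acts coordinatewise, scaling the real factor and shifting each $p$-adic factor by $v_p(q)$. Taking boxes $B_n=I_n\times\prod_{p\in P}p^{-m_{n,p}}\Z_p$ and $E_n=\Lambda\cap B_n$, the set $q^iB_n$ is again a box, and for $m_{n+1,p}$ and $\card{I_{n+1}}$ large enough (a common multiple over the finitely many $i$) one has $q^iB_n\subseteq B_{n+1}$ with the $p$-adic factors of finite index and the real factor a sub-interval; then $B_{n+1}$ is partitioned by $\Lambda$-translates of $q^iB_n$ (cosets in the $p$-adic factors, interval-translates in $\R$), and intersecting with $\Lambda$ gives the desired additive tiling of $E_{n+1}$ by $q^iE_n$. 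To reach all of $\Q$ I would let $P$ grow to include successively more primes; since $q$ is a $p$-adic unit for $p\nmid ab$, the extra coordinates are $q$-invariant and do not disturb the tilings, while enlarging $m_{n,p}$ and $I_n$ yields exhaustion. The main obstacle, and the only genuinely technical step, is this coordinated choice of parameters together with the strong-approximation argument guaranteeing that the tiling translates can be chosen \emph{inside the lattice} $\Lambda$ (equivalently inside $\Q$), so that the partitions of boxes descend to honest partitions of the finite sets $E_n$; everything else is bookkeeping. Taking $E_n$ alone recovers the sequence witnessing that $\Q$ is locally monotileable, and the fibered sequence $(\Phi_n)$ witnesses the same for $G$; via Proposition~\ref{lm=cm} both are in fact congruent monotileable.
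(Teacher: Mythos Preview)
Your approach is correct in outline but takes a genuinely different route from the paper's. You correctly observe that the Extension Theorem (Theorem~\ref{monosequ1}) does not apply, because $\Q$ fails to be $\mathrm{Inn}(G)$-monotileable when $q\neq\pm1$; your argument for this (via $q^m(E_n-E_n)\subseteq E_{n+1}-E_{n+1}$ for all $m$) is valid. However, you then overlook the more refined tool the paper develops precisely for this situation: Corollary~\ref{exco1} (a consequence of Theorem~\ref{expro1}), which requires only that $\Q$ be $\phi$-\emph{monotileable}, i.e., that both $qE_n$ and $q^{-1}E_n$ tile $E_{n+1}$. These are just two tiling conditions per step, and the paper verifies them for the explicit boxes $E_n=\langle 1/(|a|^nb^n\, n!)\rangle\cap[0,2^n|a|^nb^n)$ by elementary arithmetic, then invokes Corollary~\ref{exco1} directly.

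Your fibered construction $\Phi_n=E_n\times J_n$ is essentially a rediscovery of Lemma~\ref{newle}, after which you ask for the stronger condition that $q^iE_n$ tile $E_{n+1}$ simultaneously for all $i\in\{0,\ell_n,\dots,\ell_{n+1}-\ell_n\}$. This is achievable, but the paper's machinery (Lemma~\ref{exle1} plus a subsequence argument inside Theorem~\ref{expro1}) manufactures exactly these tilings automatically from the two-condition $\phi$-monotileability, so you are redoing that general argument by hand in this special case. Your adelic picture for the boxes $E_n$ is valid and conceptually pleasant, but it is heavier than needed: the paper's explicit arithmetic progressions already exhibit the required structure without any appeal to strong approximation, and the step you flag as the ``main obstacle'' (ensuring the tiling translates can be chosen inside $\Q$) simply does not arise in the paper's construction, where everything is written down explicitly in $\Q$ from the start. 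In short, your proof would work once the parameter bookkeeping is completed, but the paper's route via $\phi$-monotileability is both shorter and fully explicit.
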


The following diagram represents all known (non-)implications among the above mentioned properties for countable groups, and the related open questions.

$$\xymatrix@!0@C5.7cm@R=1.7cm{
& \text{virtually nilpotent}  \ar@{=>}[d] \ar@/^4pc/@{=>}[dd]^{\text{\cite{CC}}}&\\
\text{solvable and congr.~monot.}\ar@{=>}[d]\ar@{->}[ru]|{/}_{T.\ref{tipro3}}\ar@/^9pc/@{->}[rrd]|{/}^{T.\ref{tipro3}}& {\begin{array}{@{}c@{}} \text{virtually hypercentral} \\ \text{of length}\ <\omega^2 \end{array}}\ar@{=>}[d]^{\text{T.\ref{vhy}}}&\\
\text{solvable}\ \ar@{-->}[r]^{\!\!\!\!\!\!\!\!\!\!\!\!\!\!\!\!\!\!\!\!\!\!\!\!\!\!\!\!\!\!\!\!\!\!\!\!\!\!\!\!\!\!\!\!\!\!\!\!\!\!\!\!\!\!\!\! Q.\ref{question}} \ar@{=>}[d] & \ \text{congr.~monotileable}\overset{\text{P.\ref{lm=cm}}}{\Longleftrightarrow} \text{locally monotileable}\ar@{=>}[d] \ar@{-->}[dl]_{\text{Q.\ref{Ebli-Q}}}\ar@/^1.5pc/@{->}[r]|{/}^{T.\ref{tipro3}}\ar@/^4pc/@{->}[uu]|{/}^{T.\ref{tipro3}}& \text{res.~finite amenable}\ar@{=>}[l]_{\text{\cite{CP}}\!\!\!\!\!\!\!\!\!\!\!\!\!\!\!\!\!\!\!\!\!\!\!\!\!\!\!\!\!\!\!\!\!\!\!\!\!\!\!\!\!\!} \ar@{=>}[d]\ar@{=>}[dl]^{\text{\cite{Weiss}}}\\
\text{elem.~amenable}\ar@{=>}[r]^{\text{\cite{Ebli,Weiss}}}\ar@{=>}[d] & \MTA \ar@{=>}[d] \ar@/_0.7pc/@{-->}[u]_{\text{Q.\ref{Q-CC}}}\ar@{=>}[dl]\ar@{=>}[dr] & \text{residually~finite}\ar@{=>}[d]^{\text{\cite{Weiss}}}\ar@{->}[l]|{/} \\
\text{amenable}\ar@{=>}[d]_{\text{\cite{Dan,DHZ}}}\ar@/_1.2pc/@{->}[rr]|{/} \ar@/_1pc/@{-->}[ur]_{\text{Q.\ref{Q-Weiss}}}& \MT\ \text{and amenable}\ar@/_0.7pc/@{-->}[u]_{\text{Q.\ref{WeissQues}}}\ar@{=>}[l]\ar@{=>}[r] & \MT \ar@/^2pc/@{->}[ll]|{/} \\
\text{finitileable} & &
}$$

The following open questions, related to Problem~\ref{ticon1} and Question~\ref{question}, are motivated by Theorem~\ref{tipro3}.

\begin{question}
\begin{enumerate}[(a)]
\item Is the group $\Q^n \rtimes_{\phi}\Z$ \tileable \ for every automorphism $\phi$ of $\Q^n$?
\item More generally, is the group $H \rtimes_{\phi}\Z$ \tileable \ for an abelian group $H$ and $\phi\in\Aut(H)$?
\item If $H$ is a \tileable\ group and $K$ is a finitely generated \tileable\ subgroup of $\Aut(H)$, is the group $H\rtimes K$ \tileable?
\end{enumerate}
\end{question}

Call a group $G$ \emph{hereditarily locally monotileable} (briefly, \emph{h-\tileable}) if every countable subgroup of $G$ is \tileable.
It can be easily deduced from Theorem~\ref{vhy} that every virtually nilpotent group, as well as every locally nilpotent group, is h-\tileable. 

\begin{problem}\label{ticon2}
Find examples of finitely generated amenable groups that are not \tileable, or at least h-\tileable.
\end{problem}

In \S\ref{ex-sec} we provide further examples, showing an example of a locally monotileable group that is neither virtually solvable nor locally finite, nor residually solvable, and of a locally monotileable group $G$ that is virtually hypercentral, yet neither virtually nilpotent nor residually finite.

\smallskip
We show that $\mathfrak M$ is stable under some countable direct limits, in particular, under countable direct sums (so, under finite direct products as well). %
So, we conclude with the following open question about basic stability properties of $\M$.

\begin{question}
Is $\M$ stable under taking subgroups or quotients?
\end{question}

\subsubsection*{Acknowledgments}

It is a pleasure to thank Benjamin Weiss for the fruitful discussions with the second named author and for the wealth of useful suggestions, also concerning the papers \cite{DFG-amac,DFG}. In particular his suggestion to work with monotiles lead to this work.

\smallskip
This work was partially supported by the ``National Group for Algebraic and Geometric Structures, and their Applications'' (GNSAGA - INdAM).

\subsubsection*{Notation and terminology}

For a set $X$, we let $\ell(X)=\log|X|,$ using the convention that $\ell(X)=\infty$ if the set $X$ is infinite. Moreover, we denote by $\P(X)$ the family of all subsets of $X$ and by $\Pf(X)$ its subfamily consisting of all non-empty finite subsets of $X$.
For an abelian group $A$, let $\Pf^0(A)$ be the family of all finite subsets of $A$ containing $0$.

In case $S$ is a monoid with neutral element $1$, a left semigroup action $S\overset{\alpha}{\curvearrowright}X$ is a \emph{left monoid action} of $S$ on $X$ if $\alpha(1)(x) = x$ for all $x\in X$, i.e., $\alpha(1)$ is the identity map $id_X$. If $S$ is a group, then this condition implies that $\alpha(s)$ is a bijection for every $s\in S$.
Unless otherwise stated, all the actions of monoids considered in this paper are monoid actions.

We recall that a  \emph{right F\o lner net} of a semigroup $S$ is a net $(F_i)_{i\in I}$ in $\Pf(S)$ such that
$\lim_{i\in I}{|F_is\setminus F_i|}/{|F_i|}=0$ for every $s\in S$.
By \cite[Corollary 4.3]{N}, a semigroup $S$ is \emph{right amenable} if and only if $S$ admits a right F\o lner net.
In case $S$ is commutative we omit the adjective ``right".

\section{Addition Theorem}\label{AT-sec}

%
%
%

\subsection{Properties of the trajectories}

%

%

\begin{lemma}\label{disjoint}
Let $S$ be a semigroup, $A$ an abelian group, and $S\overset{\alpha}\curvearrowright A$ a left action.
If $F,F'\in\Pf(S)$ are disjoint and $X\in\Pf^0(A)$, then $T_{F\sqcup F'}(\alpha,X)=T_F(\alpha,X)+T_{F'}(\alpha,X).$
\end{lemma}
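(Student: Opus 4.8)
The plan is to unwind the definition of the trajectory as a (Minkowski) sumset and to exploit that, since $F$ and $F'$ are disjoint, the index set $F\sqcup F'$ is the genuine disjoint union $F\cup F'$. Explicitly,
$$T_{F\sqcup F'}(\alpha,X)=\sum_{s\in F\sqcup F'}\alpha(s)(X)=\Big\{\,\textstyle\sum_{s\in F\sqcup F'}a_s : a_s\in\alpha(s)(X)\ \text{for each}\ s\,\Big\},$$
the inner sums being computed in the abelian group $A$, so the whole statement reduces to the bookkeeping fact that a sumset indexed by a partition factors as the sumset of the pieces.

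First I would establish the inclusion $\subseteq$. Given $\sum_{s\in F\sqcup F'}a_s$ with $a_s\in\alpha(s)(X)$, I use commutativity of $A$ together with the partition $F\sqcup F'=F\cup F'$, $F\cap F'=\emptyset$, to regroup the sum as $\big(\sum_{s\in F}a_s\big)+\big(\sum_{s\in F'}a_s\big)$; the first summand lies in $T_F(\alpha,X)$ and the second in $T_{F'}(\alpha,X)$, whence the element belongs to $T_F(\alpha,X)+T_{F'}(\alpha,X)$. For the reverse inclusion $\supseteq$ I would take $y+y'$ with $y=\sum_{s\in F}a_s\in T_F(\alpha,X)$ and $y'=\sum_{s\in F'}a'_s\in T_{F'}(\alpha,X)$, and assemble a single family $(b_s)_{s\in F\sqcup F'}$ by setting $b_s=a_s$ for $s\in F$ and $b_s=a'_s$ for $s\in F'$. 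This family is well defined precisely because $F\cap F'=\emptyset$, and then $y+y'=\sum_{s\in F\sqcup F'}b_s\in T_{F\sqcup F'}(\alpha,X)$.

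The single point that matters is the role of disjointness: it is exactly what guarantees that each $\alpha(s)$ contributes the factor $\alpha(s)(X)$ to the sumset once and only once, so that regrouping neither omits nor duplicates a term. (If some $s$ lay in $F\cap F'$, the factor $\alpha(s)(X)$ would appear twice on the right-hand side, giving $\alpha(s)(X)+\alpha(s)(X)$ in place of $\alpha(s)(X)$, and the identity would in general fail.) I expect no real obstacle here: the result is a purely combinatorial identity for sumsets over a partition, and the hypotheses serve only to fix the setting and to ensure the trajectories are nonempty, the latter guaranteed by $X\in\Pf^0(A)$, so $X\neq\emptyset$.
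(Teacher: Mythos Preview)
Your argument is correct: the identity is precisely the associativity/commutativity of Minkowski sums over a partition of the index set, and your two inclusions verify this cleanly. The paper does not actually supply a proof of this lemma, treating it as immediate from the definition of $T_F(\alpha,X)=\sum_{s\in F}\alpha(s)(X)$; your write-up simply makes that immediacy explicit.
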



Note that the inclusion $T_{FF'}(\alpha,X)\subseteq T_F(\alpha,T_{F'}(\alpha,X))$ in the next lemma holds in general.

\begin{lemma}\label{TEF}
Let $S$ be a right amenable monoid, $A$ an abelian group, $S\overset{\alpha}\curvearrowright A$ a left action. If $F,F'\in\Pf(S)$ contain $1$, $X\in\Pf^0(A)$ and the sets $\{fF'\colon f\in F \}$ are pairwise disjoint, then $T_{FF'}(\alpha,X)=T_F(\alpha,T_{F'}(\alpha,X)).$
\end{lemma}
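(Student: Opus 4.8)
The plan is to derive the equality from the additivity of trajectories over disjoint index sets (Lemma~\ref{disjoint}) together with the injectivity of left translations.

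First I would use the disjointness hypothesis to write $FF'$ as the disjoint union $FF'=\bigsqcup_{f\in F}fF'$. Since $X\in\Pf^0(A)$, iterating Lemma~\ref{disjoint} along this partition (by induction on $\card{F}$, splitting off one tile $fF'$ at a time) should give
\[
T_{FF'}(\alpha,X)=\sum_{f\in F}T_{fF'}(\alpha,X).
\]

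Next I would identify each summand with $\alpha(f)\bigl(T_{F'}(\alpha,X)\bigr)$. Because $\alpha$ is a left action by endomorphisms, $\alpha(f)$ is additive on sums of subsets and $\alpha(f)\circ\alpha(f')=\alpha(ff')$, whence $\alpha(f)\bigl(T_{F'}(\alpha,X)\bigr)=\sum_{f'\in F'}\alpha(ff')(X)$. The map $f'\mapsto ff'$ from $F'$ onto $fF'$ is a bijection (surjective by definition, injective by cancellativity), so this sum ranges exactly once over $\{\alpha(s)(X)\colon s\in fF'\}$ and therefore equals $T_{fF'}(\alpha,X)$. Summing over $f\in F$ and recalling that $T_F(\alpha,Y)=\sum_{f\in F}\alpha(f)(Y)$ with $Y=T_{F'}(\alpha,X)$ then yields
\[
T_{FF'}(\alpha,X)=\sum_{f\in F}\alpha(f)\bigl(T_{F'}(\alpha,X)\bigr)=T_F\bigl(\alpha,T_{F'}(\alpha,X)\bigr).
\]

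The delicate point — and the reason the reverse of the always-valid inclusion noted before the lemma can fail — is precisely the identification $T_{fF'}(\alpha,X)=\alpha(f)\bigl(T_{F'}(\alpha,X)\bigr)$. Without injectivity of $f'\mapsto ff'$ the right-hand sum could repeat a term $\alpha(s)(X)$, and since $X$ need not be a subgroup, the presence of $0\in X$ only gives $\alpha(s)(X)\subseteq\alpha(s)(X)+\alpha(s)(X)$, with possibly strict containment; cancellativity is exactly what forces every such multiplicity to be $1$. Note that the hypothesis $0\in X$ enters only through Lemma~\ref{disjoint}, while $1\in F$ and $1\in F'$ are not needed for this particular equality.
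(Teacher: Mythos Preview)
Your argument is correct and follows the same chain of equalities as the paper's proof, which also invokes Lemma~\ref{disjoint} on the partition $FF'=\bigsqcup_{f\in F}fF'$ and then identifies $T_{fF'}(\alpha,X)$ with $\alpha(f)\bigl(T_{F'}(\alpha,X)\bigr)$. You are simply more explicit than the paper about the role of cancellativity in that identification and in observing that the hypotheses $1\in F,\,1\in F'$ are not actually used.
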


\begin{proof} By Lemma~\ref{disjoint}, we have that
\begin{equation*}T_{FF'}(\alpha,X)=T_{\bigsqcup_{f\in F}fF'}(\alpha,X)=\sum_{f\in F}T_{fF'}(\alpha,X)=\sum_{f\in F}\alpha(f)(T_{F'}(\alpha,X))=T_F(\alpha,T_{F'}(\alpha,X)).\qedhere\end{equation*}
\end{proof}

\begin{lemma}\label{D5}
Let $S$ be a right amenable semigroup, $A$ an abelian group, $S\overset{\alpha}\curvearrowright A$ a left action, $F\in\Pf(S)$ and $X,Y\in\Pf^0(A)$.  Then
$T_F(\alpha,X+Y)=T_F(\alpha,X)+T_F(\alpha,Y).$
\end{lemma}
\begin{proof}
By definition 
\begin{align*}T_F(\alpha,X+Y)&=\sum_{s\in F}\alpha(s)(X+Y)=\sum_{s\in F}(\alpha(s)(X)+\alpha(s))(Y))=\\&=\sum_{s\in F}\alpha(s)(X)+\sum_{s\in S}\alpha(s)(Y)=T_F(\alpha,X)+T_F(\alpha,Y).\qedhere\end{align*}
\end{proof}

The proof of the next lemma is straightforward.

\begin{lemma}\label{pi}
Let $S$ be a right amenable semigroup, $A$ an abelian group, $S\overset{\alpha}\curvearrowright A$ a left action, and $B$ an $\alpha$-invariant subgroup of $A$ with $\pi:A\to A/B$ the canonical projection. If $X\in\Pf^0(A)$, $F\in \Pf(S)$, then $T_F(\alpha_{A/B},\pi(X))=\pi(T_F(\alpha,X)).$
\end{lemma}

\subsection{The function $\ell(-,-)$}

Let $A$ be an abelian group. For $X,Y\in\P(A)$ let
\begin{equation}\label{Def:mu}
\mu(X,Y)=\min\left\{n\in\N\colon \exists a_0=0,a_1,\ldots,a_{n-1}\in A,\ X\subseteq \bigcup_{i=0}^{n-1}(a_i+Y)\right\}
\end{equation}
and $\ell(X,Y)=\log \mu(X,Y).$
If $X$ and $Y$ are subgroups of $A$, then $\mu(X,Y)=\mu(X+Y,Y)=[X+Y\colon Y]$; in particular, if $Y\leq X$, then $\mu(X,Y)=[X\colon Y]$.

\medskip
Obviously, the family $$\mathfrak Y= \{a_0+Y, a_1+Y, \ldots , a_{n-1}+Y\}$$ appearing in \eqref{Def:mu} is pairwise disjoint when $(C-C)\cap (Y-Y) = \{0\}$, where $C=\{a_0, a_1, \ldots , a_{n-1}\}$. We say that $\mathfrak Y$ is a {\em strongly pairwise disjoint}, if the ``fattened" family $$\mathfrak Y^*= \{a_0+Y-Y, a_1+Y-Y, \ldots , a_{n-1}+Y-Y\}$$ is still pairwise disjoint, or equivalently, when $(C-C)\cap (Y-Y+Y-Y) = \{0\}$. 

\medskip
In the following lemma we collect other useful properties of the function $\ell(-,-)$. 

\begin{lemma}\label{lem:ell}
Let $A$ an abelian group and $X,Y,Z,X',Y'\in\Pf^0(A)$. Then:
\begin{enumerate}[(a)]
\item\label{ell0} $\ell(X,Y)\geq 0$, $\ell(X,X)=0$ and $\ell(X)=\ell(X,\{0\})$;
\item\label{ell1} the function $\ell(X, Y)$ is increasing in $X$ and decreasing in $Y$;  
\item\label{ell2} $\ell(X,Y)\leq  \ell(X) \leq \ell(X,Y)+\ell(Y)$;
\item\label{ell3}  $\ell(X + X' , Y + Y') \leq \ell(X ,Y) + \ell(X', Y')$;
\item\label{ell4} $\ell(X,Y)\leq \ell(X,Z)+\ell(Z,Y)$;
\item\label{ell5} if $\varphi:A\to A$ is an endomorphism, then $\ell(\varphi(X),\varphi(Y))\leq\ell(X,Y)$;
\item\label{ell6} if $a_0=0,a_1,\ldots,a_{n-1}\in A$ are such that $X\subseteq \bigcup_{i=0}^{n-1}(a_i+Y)$, the family $\mathfrak Y= \{a_0+Y, a_1+Y, \ldots , a_{n-1}+Y\}$ is strongly pairwise disjoint and $X$ meets $a_i+Y$ for every $i\in\{0,1,\ldots,n\}$, then $\mu(X,Y)=n$.
\end{enumerate}
\end{lemma}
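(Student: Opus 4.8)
The cleanest route is to argue throughout with the integer-valued $\mu$ and to pass to logarithms only at the end; every additive inequality for $\ell$ then becomes a submultiplicative inequality for $\mu$, and the common engine behind (d), (e) and (f) is the composition of covers. Parts (a) and (b) are immediate from the definition. For (a): the normalization $a_0=0$ forces $\mu(X,Y)\geq 1$, so $\ell\geq 0$; taking $n=1$, $a_0=0$ covers $X$ by $a_0+X=X$, whence $\mu(X,X)=1$; and $\mu(X,\{0\})$ is the least number of singletons covering $X$, namely $|X|$, so $\ell(X,\{0\})=\ell(X)$. For (b): a cover of $X'\supseteq X$ by translates of $Y$ is also a cover of $X$, so $\mu$ is monotone in the first variable, and enlarging $Y$ to $Y'$ turns any cover of $X$ by translates of $Y$ into one by translates of $Y'$ with the same centres, so $\mu$ is antitone in the second.

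For (d), (e) and (f) I would combine near-optimal covers, checking each time that the normalization $a_0=0$ survives. If $X\subseteq\bigcup_i(a_i+Y)$ and $X'\subseteq\bigcup_j(b_j+Y')$ realize $\mu(X,Y)$ and $\mu(X',Y')$ with $a_0=b_0=0$, then $X+X'\subseteq\bigcup_{i,j}\bigl((a_i+b_j)+(Y+Y')\bigr)$ with $a_0+b_0=0$, giving (d); if instead $X\subseteq\bigcup_i(a_i+Z)$ and $Z\subseteq\bigcup_j(b_j+Y)$ with $a_0=b_0=0$, then $X\subseteq\bigcup_{i,j}\bigl((a_i+b_j)+Y\bigr)$, giving the triangle inequality (e); and applying an endomorphism $\varphi$ to a cover $X\subseteq\bigcup_i(a_i+Y)$ yields $\varphi(X)\subseteq\bigcup_i\bigl(\varphi(a_i)+\varphi(Y)\bigr)$ with $\varphi(a_0)=0$, giving (f). Part (c) is then formal: $\ell(X,Y)\leq\ell(X,\{0\})=\ell(X)$ by (b), while (e) applied to $\ell(X,\{0\})$ with intermediate set $Y$, together with $\ell(Y,\{0\})=\ell(Y)$, yields $\ell(X)\leq\ell(X,Y)+\ell(Y)$.

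The one substantial step is (g), the exactness $\mu(X,Y)=n$ under strong disjointness, and this is where I expect the real work to lie. The bound $\mu(X,Y)\leq n$ is given. For the reverse inequality, fix any cover $X\subseteq\bigcup_{k=0}^{m-1}(c_k+Y)$ with $m=\mu(X,Y)$, and for each index $i$ pick a point $x_i\in X\cap(a_i+Y)$ (nonempty by hypothesis) together with an index $k(i)$ satisfying $x_i\in c_{k(i)}+Y$. I claim the assignment $i\mapsto k(i)$ is injective: if $x_i,x_j\in c_k+Y$ then $x_i-x_j\in Y-Y$, and since $x_i\in a_i+Y$ and $x_j\in a_j+Y$ we obtain $a_i-a_j\in(x_i-x_j)+(Y-Y)\subseteq(Y-Y)+(Y-Y)$, so $a_i-a_j\in(C-C)\cap(Y-Y+Y-Y)=\{0\}$ by strong disjointness, forcing $i=j$. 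Hence $n\leq m=\mu(X,Y)\leq n$, so $\mu(X,Y)=n$. The delicate point, and the reason strong rather than plain disjointness is required, is precisely that the auxiliary cover lives on unrelated centres $c_k$, so one controls $x_i-x_j$ only up to $Y-Y$ and must absorb a second copy of $Y-Y$; this is exactly what the fattened disjointness condition $(C-C)\cap(Y-Y+Y-Y)=\{0\}$ supplies.
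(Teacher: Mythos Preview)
Your proof is correct and follows essentially the same approach as the paper: combining covers for (d), (e), (f), and the pigeonhole/injection argument exploiting strong disjointness for (g). Your presentation of (d) via the cross-product of centres $a_i+b_j$ is in fact cleaner than the paper's written version (which lists only $n+m$ rather than $nm$ translates, an apparent slip), and your direct injection argument for (g) is the contrapositive of the paper's proof by contradiction.
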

\begin{proof} 
\eqref{ell0}, \eqref{ell1} and \eqref{ell2} are clear.

\eqref{ell3} Follows from the fact that if $a_0=0,a_1,\ldots,a_{n-1}\in A$ and $a'_0=0,a'_1,\ldots,a'_{m-1}\in A$ are such that
$$X\subseteq (a_0+Y)\cup \ldots\cup (a_{n-1}+Y) \mbox{ and }X'\subseteq (a'_0+Y')\cup\ldots\cup (a'_{m-1}+Y'),$$
then $$X+X'\subseteq (a_0+Y+Y')\cup\ldots\cup (a_{n-1}+Y+Y')\cup (a'_0+Y+Y')\cup\ldots\cup (a'_{m-1}+Y+Y').$$

\eqref{ell4} Similarly, if $a_0=0,a_1,\ldots,a_{n-1}\in A$ and $b_0=0,b_1,\ldots,b_{m-1}\in A$ are such that 
$$X\subseteq (a_0+Z)\cup \ldots\cup (a_{n-1}+Z)\mbox{ and }Z\subseteq (b_0+Y)\cup\ldots\cup (b_{m-1}+Y),$$ 
then $$X\subseteq \bigcup_{i\in\{0,\ldots,n-1\},j\in\{0,\ldots,m-1\}}(a_i+b_j+Y).$$

\eqref{ell5} Let $a_0=0,a_1,\ldots,a_{n-1}\in A$ such that $X\subseteq (a_0+Y)\cup (a_1+Y)\cup\ldots\cup (a_{n-1}+Y).$
Then 
\[\begin{split}\varphi(X)&\subseteq\varphi((a_0+Y)\cup (a_1+Y)\cup\ldots\cup (a_{n-1}+Y))\\&=(\varphi(a_0)+\varphi(Y))\cup (\varphi(a_1)+\varphi(Y))\cup\ldots\cup(\varphi(a_{n-1})+\varphi(Y)).\end{split}\]

\eqref{ell6} Obviously, $\mu(X,Y)\leq n$. Assume that $m:=\mu(X,Y) < n$. Then there exist $b_0=0,b_1, \ldots, b_{m-1}\in A$ such that 
\begin{equation}\label{Eq1}
X\subseteq \bigcup_{j=0}^{m-1}(b_j+Y).
\end{equation}
For every $i\in\{0,1,\ldots n-1\}$ there exists $x_i\in X\cap (a_i + Y)$, by hypothesis.
From \eqref{Eq1} and our assumption, $m<n$ implies that there exist $0\leq i < j < n$ and $k\in\{1,\ldots,m\}$ such that $x_i, x_j\in b_k + Y$. Hence, $x_i - x_j \in  Y-Y$. Since moreover $x_i-x_j\in a_i-a_j+Y-Y$, it follows that $a_i-a_j\in Y-Y+Y-Y$. For $C=\{a_0, a_1, \ldots , a_{n-1}\}$, our hypothesis on $\mathfrak Y$ gives $a_i-a_j\in(C-C)\cap (Y-Y+Y-Y) = \{0\}$, and so $a_i-a_j=0$, a contradiction.
\end{proof}

\begin{proposition}\label{D7}\label{D8}
Let $A$ be an abelian group, $B$ a subgroup of $A$ and $\pi:A\to A/B$ the canonical projection. If $X\in\Pf^0(A)$, then:
\begin{enumerate}[(a)]
\item there exists $Y\in\Pf^0(B)$ such that $\ell(\pi(X))=\ell(X,B) = \ell(X,Y)$;
\item for $Y\in\Pf^0(B)$ we have that $\ell(X+Y)\geq \ell(\pi(X))+\ell(Y)$.
\end{enumerate}
\end{proposition}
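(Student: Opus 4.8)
The plan is to treat (a) and (b) separately; both reduce to elementary coset-counting, exploiting that, since $B$ is a subgroup, the translates $a+B$ are exactly the cosets of $B$ and hence are pairwise equal or disjoint.

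For (a) I would first compute $\mu(X,B)$. Covering the finite set $X$ by translates of $B$ is the same as covering the cosets of $B$ that $X$ meets, and there are exactly $|\pi(X)|$ of these; since $0\in X$ lies in the coset $B=0+B$, the normalization $a_0=0$ in \eqref{Def:mu} can be respected, so $\mu(X,B)=|\pi(X)|$ and thus $\ell(X,B)=\ell(\pi(X))$. To produce the required $Y$, for each coset $c$ met by $X$ I would fix a representative $x_c\in X$ with $\pi(x_c)=c$, choosing $x_c=0$ when $c=0$, and set
\[
Y=\{0\}\cup\{x-x_{\pi(x)}\colon x\in X\}.
\]
Each element $x-x_{\pi(x)}$ lies in $\ker\pi=B$, so $Y\in\Pf^0(B)$, and by construction $X\subseteq\bigcup_{c\in\pi(X)}(x_c+Y)$, which gives $\mu(X,Y)\le|\pi(X)|=\mu(X,B)$. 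The reverse inequality $\ell(X,B)\le\ell(X,Y)$ is immediate from $Y\subseteq B$ and the monotonicity of $\ell(X,-)$ recorded in Lemma~\ref{lem:ell}\ref{ell1}. Combining the two yields $\ell(X,Y)=\ell(X,B)=\ell(\pi(X))$.

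For (b) the idea is to bound $|X+Y|$ from below coset by coset. Since $Y\subseteq B$ we have $\pi(X+Y)=\pi(X)$, so $X+Y$ meets precisely the $|\pi(X)|$ cosets met by $X$. Fixing for each such coset $c$ an element $x_c\in X$ with $\pi(x_c)=c$, the $|Y|$ elements $x_c+y$ (with $y\in Y$) are distinct, belong to $X+Y$, and all lie in $c$. Elements lying in distinct cosets are distinct, so summing over the $|\pi(X)|$ cosets exhibits at least $|\pi(X)|\cdot|Y|$ distinct elements of $X+Y$; hence $|X+Y|\ge|\pi(X)|\cdot|Y|$, and taking logarithms gives $\ell(X+Y)\ge\ell(\pi(X))+\ell(Y)$.

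Neither part presents a genuine obstacle, the arguments being purely combinatorial. The only points requiring care are the bookkeeping of the normalization $a_0=0$ in the definition of $\mu$ (automatic here because $0\in X$ and $0\in Y$) and the verification that the set $Y$ built in (a) is finite and contained in $B$, so that it is a legitimate element of $\Pf^0(B)$.
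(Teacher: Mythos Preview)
Your argument is correct in both parts. Part~(b) is essentially the paper's proof: your set of coset representatives $\{x_c:c\in\pi(X)\}$ is exactly the set $Z$ the paper chooses, and the counting is identical.

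For part~(a) you take a genuinely different, and more economical, route. The paper sets $Y=(X-X)\cap B$, exhibits a covering $X\subseteq\bigcup_i(a_i+Y)$ by coset representatives $a_i$, and then invokes the ``strongly pairwise disjoint'' criterion of Lemma~\ref{lem:ell}\eqref{ell6} to conclude that this covering is minimal, i.e.\ $\mu(X,Y)=|\pi(X)|$. You instead obtain the lower bound $\mu(X,Y)\ge\mu(X,B)$ directly from the monotonicity recorded in Lemma~\ref{lem:ell}\eqref{ell1}, since $Y\subseteq B$. This is cleaner: it bypasses Lemma~\ref{lem:ell}\eqref{ell6} and the accompanying notion of strongly pairwise disjoint families entirely (which, as far as one can tell, are introduced in the paper solely for this step). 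Your choice of $Y$ is also slightly smaller than the paper's, being contained in $(X-X)\cap B$; either works. The only thing the paper's approach buys is an explicit description of a single $Y$ independent of a choice of representatives, but for the purposes of the proposition this is immaterial.
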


\begin{proof} (a) The equality 
\begin{equation}\label{AF}
\ell(\pi(X))=\ell(X,B)
\end{equation}
is obvious. 

Let $Y=(X-X)\cap B\in\Pf^0(B)$ and let $Z=\{a_1,\ldots,a_{n-1}\}\subseteq X\setminus B$ be such that $\pi(X) \subseteq \pi(Z) \cup\{0\}$ and 
$(Z-Z)\cap B=\{0\}$,  in other words $Z\cup\{0\}$ is a set of representatives of $\pi(X)$. Then 
$$
\card{Z\cup\{0\}}=\card{\pi(Z\cup\{0\})}=\card{\pi(X)}.
$$ 
In particular, putting $a_0=0$, one obtains
\begin{equation}\label{subseteq}
X\subseteq (a_0+Y)\cup(a_1+Y)\cup\ldots\cup(a_{n-1}+Y).
\end{equation}
In fact, assume that $x\in X$. If $x\in B$, so $x\in X\cap B\subseteq Y$. Otherwise, if $x\not\in B$, there exists $i\in\{1,\ldots,n-1\}$ such that $x\in a_i+B$; since $x-a_i\in (X-X)\cap B=Y$, one concludes that $x\in a_i+Y$.

On the other hand, by the choice of $Z$, the family $\{a_i+Y\colon i\in\{0,\ldots,n-1\}\}$ appearing in \eqref{subseteq} is strongly pairwise disjoint. Indeed, $Y-Y+Y-Y \subseteq B$, as $Y\subseteq B$. Therefore, $$(Z-Z)\cap (Y-Y+Y-Y)\subseteq (Z-Z)\cap B=\{0\}.$$ 
By Lemma~\ref{lem:ell}\eqref{ell6}, this shows also that $\mu(X,Y)=n = \pi(X)$. Therefore, $\ell(\pi(X))=\ell(X,Y)$. 

%
%
%
%

\medskip
(b) Let $Z\in\Pf^0(A)$ such that $Z\subseteq X$, $\pi(Z)=\pi(X)$ and $(Z-Z)\cap B=\{0\}$; then $\card{Z}=\card{\pi(Z)}=\card{\pi(X)}.$
The bijectivity of the map $Z\times Y\to Z+Y,\ (z,y)\mapsto z+y$ entails 
\begin{equation}\label{Dic:19}
\card{Z+Y}=\card{Z}\card{Y}.
\end{equation}
The inclusion $Z+Y\subseteq X+Y$ and (\ref{Dic:19}) yield the required inequality \[\ell(X+Y)\geq \ell(Z+Y)=\ell(Z)+\ell(Y)=\ell(\pi(X))+\ell(Y).\qedhere\]
\end{proof}

\begin{lemma}\label{D9}
Let $S$ be a right amenable semigroup, $A$ an abelian group, $S\overset{\alpha}\curvearrowright A$ a left action. If $F\in\Pf(S)$ and $X,Y\in\Pf^0(A)$, then
$\ell(T_F(\alpha,X),T_F(\alpha,Y))\leq |F|\ell(X,Y).$
\end{lemma}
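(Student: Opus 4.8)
The plan is to reduce the statement to the two structural properties of $\ell(-,-)$ already recorded in Lemma~\ref{lem:ell}, namely subadditivity under Minkowski sums (part~\eqref{ell3}) and monotonicity under endomorphisms (part~\eqref{ell5}). First I would enumerate $F = \{s_1,\dots,s_n\}$ with $n = \abs{F}$ and rewrite the two trajectories as the Minkowski sums
\[
T_F(\alpha,X)=\sum_{i=1}^{n}\alpha(s_i)(X),\qquad T_F(\alpha,Y)=\sum_{i=1}^{n}\alpha(s_i)(Y).
\]
Since each $\alpha(s_i)$ is an endomorphism, it fixes $0$, so every $\alpha(s_i)(X)$ and $\alpha(s_i)(Y)$ lies in $\Pf^0(A)$, and hence all the partial Minkowski sums do too; this keeps us within the hypotheses of Lemma~\ref{lem:ell} throughout.

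Next I would apply the subadditivity estimate of Lemma~\ref{lem:ell}\eqref{ell3} iteratively. Part~\eqref{ell3} is stated for two summands, $\ell(X+X',Y+Y')\leq \ell(X,Y)+\ell(X',Y')$, so a straightforward induction on $n$ (splitting off one factor $\alpha(s_n)(X)$ and $\alpha(s_n)(Y)$ at each step) yields the $n$-fold version
\[
\ell\!\left(\sum_{i=1}^{n}\alpha(s_i)(X),\ \sum_{i=1}^{n}\alpha(s_i)(Y)\right)\ \leq\ \sum_{i=1}^{n}\ell\bigl(\alpha(s_i)(X),\alpha(s_i)(Y)\bigr).
\]
That is, $\ell(T_F(\alpha,X),T_F(\alpha,Y))\leq \sum_{i=1}^{n}\ell(\alpha(s_i)(X),\alpha(s_i)(Y))$.

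Finally I would bound each summand using Lemma~\ref{lem:ell}\eqref{ell5}: since $\alpha(s_i)\colon A\to A$ is an endomorphism, $\ell(\alpha(s_i)(X),\alpha(s_i)(Y))\leq \ell(X,Y)$ for every $i$. Summing over the $n=\abs{F}$ indices gives
\[
\ell(T_F(\alpha,X),T_F(\alpha,Y))\ \leq\ \sum_{i=1}^{n}\ell(X,Y)\ =\ \abs{F}\,\ell(X,Y),
\]
which is the claim. I do not anticipate a genuine obstacle here: the argument is a clean two-line assembly of the prepared lemma. The only point meriting care is making the passage from the binary subadditivity in~\eqref{ell3} to its $n$-ary form explicit via induction, and recording that each $\alpha(s_i)(X)$ still contains $0$ so that the successive applications of Lemma~\ref{lem:ell} are legitimate.
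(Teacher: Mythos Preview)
Your proposal is correct and follows essentially the same approach as the paper: expand the trajectories as Minkowski sums, apply the subadditivity of Lemma~\ref{lem:ell}\eqref{ell3} to split the sum, then bound each term by $\ell(X,Y)$ via Lemma~\ref{lem:ell}\eqref{ell5}. The paper's proof is just the one-line version of your argument, omitting the explicit induction for the $n$-ary subadditivity and the check that each $\alpha(s_i)(X)\in\Pf^0(A)$.
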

\begin{proof}
By definition, Lemma~\ref{lem:ell}\eqref{ell3} and Lemma~\ref{lem:ell}\eqref{ell5}, we have that
\[\begin{split}\ell(T_F(\alpha,X),T_F(\alpha,Y))&=\ell\left(\sum_{s\in F}\alpha(s)(X),\sum_{s\in F}\alpha(s)(Y)\right)\leq \\ &\leq\sum_{s\in F}\ell(\alpha(s)(X),\alpha(s)(Y))\leq \sum_{s\in F}\ell(X,Y)=\card{F}\ell(X,Y)\qedhere.\end{split}\]
\end{proof}

\subsection{Proof of the Addition Theorem}

We recall the following basic property of the algebraic entropy that is used in the sequel without referring to it each time.

\begin{lemma}[See \cite{DFG-amac}]
Let $S$ be a cancellative right amenable semigroup, $A$ an abelian group, and $S\overset{\alpha}{\curvearrowright}A$ a left action.
If $X,Y\in \Pf(A)$ and $X\subseteq Y$, then $H_{alg}(\alpha,X)\leq H_{alg}(\alpha,Y)$.
Consequently, if $\mathcal F\subseteq\Pf(A)$ is cofinal with respect to $\subseteq$, then $h_{alg}(\alpha)=\sup\{H_{alg}(\alpha,X): X\in\mathcal F\}.$
\end{lemma}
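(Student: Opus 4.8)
The plan is to reduce both assertions to the evident monotonicity of the trajectories $T_F(\alpha,-)$ under inclusion. First I would note that for every $s\in S$ the map $\alpha(s)$ is monotone with respect to inclusion, so $X\subseteq Y$ yields $\alpha(s)(X)\subseteq\alpha(s)(Y)$ for each $s\in F$. Since the Minkowski sum of subsets of $A$ is monotone in each summand, this gives
\[
T_F(\alpha,X)=\sum_{s\in F}\alpha(s)(X)\subseteq\sum_{s\in F}\alpha(s)(Y)=T_F(\alpha,Y)
\]
for every $F\in\Pf(S)$. (Here one does not even need $0\in X$; nonemptiness of $X$ suffices for the trajectory to be defined.)

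From $T_{F_i}(\alpha,X)\subseteq T_{F_i}(\alpha,Y)$ I obtain $|T_{F_i}(\alpha,X)|\leq|T_{F_i}(\alpha,Y)|$, whence $\ell(T_{F_i}(\alpha,X))\leq\ell(T_{F_i}(\alpha,Y))$ by monotonicity of $\log$ (with the usual convention $\ell=\infty$ on infinite sets). Dividing by $|F_i|$ and passing to the limit along a right F\o lner net $(F_i)_{i\in I}$ of $S$ gives $H_{alg}(\alpha,X)\leq H_{alg}(\alpha,Y)$. This passage to the limit is legitimate precisely because the defining limit exists and is independent of the chosen F\o lner net, by \cite[Theorem 1.1]{CCK}; this is the only point requiring care, and it is exactly where the right amenability and cancellativity hypotheses are used.

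For the second statement, the inequality $\sup\{H_{alg}(\alpha,X):X\in\mathcal F\}\leq h_{alg}(\alpha)$ is immediate from $\mathcal F\subseteq\Pf(A)$ together with the definition of $h_{alg}$ as a supremum over all of $\Pf(A)$. For the reverse inequality I would fix an arbitrary $X\in\Pf(A)$ and invoke cofinality of $\mathcal F$ to produce $Y\in\mathcal F$ with $X\subseteq Y$; the first part then yields $H_{alg}(\alpha,X)\leq H_{alg}(\alpha,Y)\leq\sup\{H_{alg}(\alpha,Z):Z\in\mathcal F\}$. Taking the supremum over $X\in\Pf(A)$ gives $h_{alg}(\alpha)\leq\sup\{H_{alg}(\alpha,Z):Z\in\mathcal F\}$, which completes the proof. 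No genuine obstacle is expected: the argument is entirely formal once monotonicity of the trajectories and the existence of the F\o lner limit are in hand.
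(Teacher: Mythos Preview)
Your argument is correct and is exactly the standard monotonicity argument one expects here. Note, however, that the paper does not actually supply its own proof of this lemma: it merely records the statement with a reference to \cite{DFG-amac}, so there is no in-paper proof to compare against. Your write-up would serve perfectly well as the omitted verification.
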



By the above lemma, it is clear that
$$h_{alg}(\alpha)=\sup\{H_{alg}(\alpha,X)\colon X\in\Pf^0(A)\}.$$

The following is the key point for the proof of the Addition Theorem.

\begin{proposition}\label{D10}
Let $S$ be a countable \tileable\  cancellative right amenable monoid, and let $(F_n)_{n \in \N}$ be a \tileable\  right \Folner sequence of $S$. Let $A$ be an abelian group and $S\overset{\alpha}\curvearrowright A$ a left action. Let $X,Y\in\Pf^0(A)$. Then the following functions are decreasing:
\[\N\ni n  \mapsto \frac{\ell(T_{F_n}(\alpha,X))}{\card{F_n}}\quad \text{and}\quad \N \ni n \mapsto \frac{\ell(T_{F_n}(\alpha,X) , T_{F_n}(\alpha,Y))}{\card{F_n}}.\] 
\end{proposition}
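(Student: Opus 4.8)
The plan is to exploit the tiling structure of $(F_n)_{n\in\N}$ directly and to reduce the first function to the second. First I would observe that the first function is the special case $Y=\{0\}$ of the second: since each $\alpha(s)$ is an endomorphism we have $\alpha(s)(\{0\})=\{0\}$, whence $T_{F_n}(\alpha,\{0\})=\{0\}$, and by Lemma~\ref{lem:ell}\eqref{ell0}
\[
\ell\bigl(T_{F_n}(\alpha,X),T_{F_n}(\alpha,\{0\})\bigr)=\ell\bigl(T_{F_n}(\alpha,X),\{0\}\bigr)=\ell\bigl(T_{F_n}(\alpha,X)\bigr).
\]
Hence it suffices to establish monotonicity of the second function, and the first follows at once by taking $Y=\{0\}\in\Pf^0(A)$.

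Next I would invoke the tiling sequence $(K_n)_{n\in\N}$ associated to $(F_n)_{n\in\N}$, so that for each $n$ one has the disjoint decomposition $F_{n+1}=\bigsqcup_{k\in K_{n+1}}kF_n$ together with $\card{F_{n+1}}=\card{K_{n+1}}\cdot\card{F_n}$ (each $kF_n$ has $\card{F_n}$ elements by cancellativity). Writing $K=K_{n+1}$, the key computation is that for every $Z\in\Pf^0(A)$
\[
T_{F_{n+1}}(\alpha,Z)=\sum_{k\in K}T_{kF_n}(\alpha,Z)=\sum_{k\in K}\alpha(k)\bigl(T_{F_n}(\alpha,Z)\bigr),
\]
where the first equality is Lemma~\ref{disjoint} applied across the disjoint union, and the second uses cancellativity (so $f\mapsto kf$ is a bijection $F_n\to kF_n$) together with the fact that the endomorphism $\alpha(k)$ commutes with finite sumsets.

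Then I would combine the subadditivity estimates already available. Setting $W_X=T_{F_n}(\alpha,X)$ and $W_Y=T_{F_n}(\alpha,Y)$, iterating Lemma~\ref{lem:ell}\eqref{ell3} over the $\card{K}$ summands and then applying Lemma~\ref{lem:ell}\eqref{ell5} to each $\alpha(k)$ yields
\[
\ell\bigl(T_{F_{n+1}}(\alpha,X),T_{F_{n+1}}(\alpha,Y)\bigr)\leq\sum_{k\in K}\ell\bigl(\alpha(k)(W_X),\alpha(k)(W_Y)\bigr)\leq\card{K}\,\ell(W_X,W_Y).
\]
Dividing by $\card{F_{n+1}}=\card{K}\card{F_n}$ gives precisely
\[
\frac{\ell(T_{F_{n+1}}(\alpha,X),T_{F_{n+1}}(\alpha,Y))}{\card{F_{n+1}}}\leq\frac{\ell(T_{F_n}(\alpha,X),T_{F_n}(\alpha,Y))}{\card{F_n}},
\]
which is the asserted monotonicity.

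I expect no serious obstacle: the whole argument consists of the decomposition displayed above followed by the collected properties of $\ell$. The only step deserving care is that key computation, where one must use cancellativity to identify $T_{kF_n}(\alpha,Z)$ with $\alpha(k)\bigl(T_{F_n}(\alpha,Z)\bigr)$ and the additivity of $\alpha(k)$ on sumsets; everything afterwards is bookkeeping. For the first function one could alternatively argue in place, replacing Lemma~\ref{lem:ell}\eqref{ell3},\eqref{ell5} by subadditivity of $\ell$ on sumsets and the trivial bound $\card{\alpha(k)(W)}\leq\card{W}$, but the reduction to $Y=\{0\}$ makes this unnecessary.
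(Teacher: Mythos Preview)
Your proposal is correct and follows essentially the same approach as the paper: you reduce the first function to the second via $Y=\{0\}$, decompose $F_{n+1}=\bigsqcup_{k\in K}kF_n$, and bound $\ell$ by $\card{K}\,\ell(W_X,W_Y)$ before dividing. The only difference is cosmetic: the paper packages your displayed identity for $T_{F_{n+1}}(\alpha,Z)$ as Lemma~\ref{TEF} (recognizing $\sum_{k\in K}\alpha(k)(W)=T_K(\alpha,W)$) and your subsequent estimate as Lemma~\ref{D9}, whereas you unpack both inline.
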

\begin{proof}
The first assertion follows from the second one by taking $Y= \set 0$. To prove the second assertion
let $n \in \N$. Then there exists $K=K_{n+1}$ such that $F_{n+1} = \bigsqcup_{s\in K} s F_n$; in particular, $\card{F_{n+1}}=\card{K}\card{F_n}$.
Then by Lemma~\ref{TEF}, $$T_{F_{n+1}}(\alpha,X)=T_{KF_n}(\alpha,X)=T_K(\alpha,T_{F_n}(\alpha,X)).$$
The same holds for $Y$.
Therefore, by Lemma~\ref{D9},
\begin{equation*}
\ell(T_{F_{n+1}}(\alpha,X) , T_{F_{n+1}}(\alpha,Y))=\ell(T_K(\alpha,T_{F_n}(\alpha,X)),T_K(\alpha,T_{F_n}(\alpha,Y)))\leq \card{K}\ell(T_{F_{n}}(\alpha,X) , T_{F_{n}}(\alpha, Y)),
\end{equation*}
and so
\[\begin{split}\frac{\ell(T_{F_{n+1}}(\alpha,X) , T_{F_{n+1}}(\alpha,Y))}{\card{F_{n+1}}} \leq
\frac{\card{K}\ell(T_{F_{n}}(\alpha,X) , T_{F_{n}}(\alpha, Y))}{\card{K}\card{F_{n}}}=\frac{\ell(T_{F_{n}}(\alpha,X) , T_{F_{n}}(\alpha, Y))}{\card{F_{n}}};\end{split}\]
this proves the second assertion. 
 \end{proof}

We are now in position to prove the Addition Theorem.

\begin{proof}[\bf Proof of Theorem~\ref{ATintro}] We have to prove that if  $S$ be a countable cancellative right amenable monoid
with a \tileable\  right F\o lner sequence $(F_n)_{n\in\N}$, $A$ is an abelian group, $S\overset{\alpha}{\curvearrowright}A$ a left action, 
and $B$ is an $\alpha$-invariant subgroup of $A$, then $h_{alg}(\alpha) = h_{alg}(\alpha_{A/B})+h_{alg}(\alpha_B).$

First we prove the inequality
\begin{equation}\label{geq}
h_{alg}(\alpha) \geq \halg(\alpha_{A/B})+\halg(\alpha_B).
\end{equation}
Let $\pi:A\to A/B$ be the canonical projection, $X\in\Pf^0(A)$ and $Y\in\Pf^0(B)$. Pick $Z\in\Pf^0(A)$, as in the proof of Proposition~\ref{D8}(b), i.e., with $\pi(Z)=\pi(X)$ and $(Z-Z) \cap B = \{0\}$, so that $\card{Z}=\card{\pi(Z))}=\card{\pi(X)}$.
For every $n\in\N$, by Lemma~\ref{D5}, Proposition~\ref{D8}(b) and Lemma~\ref{pi}, one has the inequalities
\[\begin{split}\ell(T_{F_n}(\alpha,Z+Y))&=\ell(T_{F_n}(\alpha,Z)+T_{F_n}(\alpha,Y))\geq \\ &\geq \ell(\pi(T_{F_n}(\alpha,Z))) +\ell(T_{F_n}(\alpha,Y))=\ell(T_{F_n}(\alpha_{A/B},\pi(Z)))+\ell(T_{F_n}(\alpha,Y)).\end{split}\]
Since $\pi(X)=\pi(Z)$, after division by $|F_n|$ and letting $n\to \infty$ 
one obtains the inequalities
$$\halg(\alpha)\geq H_{alg}(\alpha,Z+Y)\geq H_{alg}(\alpha_{A/B},\pi(X))+H_{alg}(\alpha_B,Y).$$
So, taking the supremum on the right-hand side of the latter inequality over all $X\in\Pf^0(A)$ and $Y\in\Pf^0(B)$, gives the inequality in \eqref{geq}.

It remains to prove  the inequality
\begin{equation}\label{leq}
h_{alg}(\alpha) \leq  \halg(\alpha_{A/B})+\halg(\alpha_B).
\end{equation}
Fix $\eps>0$ and $X\in\Pf^0(A)$. By Lemma~\ref{D10}, there exists $M\in\N$ such that, for every $n \geq M$,
\begin{equation}\label{-3-}
\frac{\ell(T_{F_n}(\alpha_{A/B}, \pi(X)))}{\card{F_n}} \leq H_{alg}(\alpha_{A/B}, \pi(X)) + \eps\leq \halg(\alpha_{A/B})+\eps. 
\end{equation}
By Proposition~\ref{D7}(a), there exists $Y\in\Pf^0(B)$ such that  $\ell(\pi(T_{F_M}(\alpha,X)))=\ell(T_{F_M}(\alpha,X),Y).$
Hence, we can write $\ell(T_{F_M}(\alpha,X),Y)= \ell(T_{F_M}(\alpha_{A/B},\pi(X))$, in view of Lemma~\ref{pi}.
So, Lemma~\ref{lem:ell}\eqref{ell1} and the inclusion $Y \subseteq T_{F_M}(\alpha,Y)$ allow us to conclude that 
$$
\ell(T_{F_M}(\alpha,X),\ell(T_{F_M}(\alpha,Y)))\leq \ell(T_{F_M}(\alpha,X),Y) = \ell(T_{F_M}(\alpha_{A/B},\pi(X)).
$$
Combining this inequality with \eqref{-3-} and Lemma~\ref{D10}, we obtain 
\begin{equation}\label{-3-bis}
\frac{\ell(T_{F_n}(\alpha,X),\ell(T_{F_n}(\alpha,Y)))}{\card{F_n}} \leq H_{alg}(\alpha_{A/B}, \pi(X)) + \eps\leq \halg(\alpha_{A/B})+\eps
\end{equation}
for all $n \geq M$.  In view of Lemma~\ref{D10} again, there exists $M^*\geq M$ such that, for every $n\geq M^*$,
\begin{equation}\label{-4-}
\frac{\ell(T_{F_n}(\alpha, Y))}{\card{F_n}} \leq H_{alg}(\alpha, Y)  + \eps\leq \halg(\alpha_B)+\eps.
\end{equation}

By Lemma~\ref{D10}, 
\begin{equation}\label{-6-}
H_{alg}(\alpha,X)=\inf_{n\in\N}\frac{\ell(T_{F_n}(\alpha,X))}{\card{F_n}}\leq \frac{\ell(T_{F_{M^*}}(\alpha, X))}{\card{F_{M^*}}},
\end{equation}
and by Lemma~\ref{lem:ell}\eqref{ell2},
\begin{equation}\label{-7-}
\ell(T_{F_{M^*}}(\alpha, X))\leq \ell(T_{F_{M^*}}(\alpha,X) , T_{F_{M^*}}(\alpha, Y))+\ell(T_{F_{M^*}}(\alpha,Y)).
\end{equation}
Therefore, by \eqref{-6-}, \eqref{-7-}, \eqref{-3-bis} and \eqref{-4-},
\[
H_{alg}(\alpha,X)\leq \frac{\ell(T_{F_{M^*}}(\alpha, X))}{\card{F_{M^*}}}\leq \leq \frac{\ell(T_{F_{M^*}}(\alpha,X) , T_{F_{M^*}}(\alpha, Y))}{\card{F_{M^*}}} +\frac{\ell(T_{F_{M^*}}(\alpha,Y))}{\card{F_{M^*}}}\leq  \halg(\alpha_{A/B}) + \halg(\alpha_B) + 2 \eps.
\]
Since $\eps$ was chosen arbitrarily, this proves \eqref{leq}.
\end{proof}

\subsection{Application to the topological entropy via the Bridge Theorem}\label{BT-sec}

We recall that, inspired by the work of Kolmogorov and Sinai in ergodic theory, Adler, Konheim and McAndrew \cite{AKM} introduced the topological entropy for continuous selfmaps of compact topological spaces, while a different notion of topological entropy for uniformly continuous selfmaps of metric spaces was given by Bowen \cite{B} and Dinaburg \cite{Din} independently.
In the realm of topological groups, Yuzvinski \cite{Y} proved the so-called Addition Theorem (usually called Yuzvinski's addition formula) for the topological entropy of continuous endomorphisms of compact metrizable groups, that was recently extended to all compact groups in \cite{Dik+Manolo}.  

Lind, Schmidt and Ward \cite{LSW} generalized for $\Z^d$-actions on compact metrizable groups both the definition of topological entropy by Bowen, as well as that by Adler, Konheim and McAndrew, showing that they coincide. They proved the Addition Theorem for $\Z^d$-actions on compact metrizable groups. 
Moreover, Ollagnier \cite{Oll} defined the topological entropy for amenable group actions on compact spaces using open covers as in \cite{AKM}.
Recently, Li \cite{Li} established the Addition Theorem for actions of countable amenable groups $G$ on compact metrizable groups $K$; see also Chapter 13 in the recent monograph of Kerr and Li \cite{KL}.
Even if a proof seems to be not available in the literature, this result can be apparently extended to the general case, that is, without the assumption on $G$ to be countable and on $K$ to be metrizable.

Recently, Ceccherini-Silberstein, Coornaert and Krieger \cite{CCK} extended Ornstein-Weiss Lemma from \cite{OW} to cancellative amenable semigroups, and this allowed them
to define the topological entropy for amenable semigroup actions on compact spaces as follows. Let $C$ be a compact topological space, let $S$ be a cancellative right amenable semigroup, and consider a right action
$C\overset{\gamma}{\curvearrowleft}S$ by continuous selfmaps.
Let $\mathcal U=\{U_j\}_{j\in J}$ and $\mathcal V=\{V_k\}_{k\in K}$ be two open covers of $C$. One says that $\mathcal V$ refines $\mathcal U$, denoted by $\mathcal U\prec\mathcal V$, if for every $k\in K$ there exists $j\in J$ such that $V_k\subseteq U_j$. Moreover, let 
$$\mathcal U\vee\mathcal V=\{U_j\cap V_k\colon {(j,k)\in J\times K}\}.$$
Let also $N(\mathcal U)=\min\{n\in\N_+\colon\mathcal U\ \text{admits a subcover of size $n$}\}.$

For a continuous selfmap $f:C\to C$ and an open cover $\mathcal U$ of $C$, let $f^{-1}(\mathcal U)=\{f^{-1}(U_j)\}_{j\in J}$, and 
for  $F\in\Pf(S)$, let $$\mathcal U_{\gamma,F}=\bigvee_{s\in F}\gamma(s)^{-1}(\mathcal U).$$

The \emph{topological entropy of $\gamma$ with respect to $\mathcal U$} is given by the limit
$$
H_{top}(\gamma,\mathcal U)=\lim_{i\in I}\frac{\log N(\mathcal U_{\gamma,F_i})}{|F_i|},
$$ where $(F_i)_{i\in I}$ is a right F\o lner net of $S$; this limit exists and does not depend on the choice of  $(F_i)_{i\in I}$ by \cite[Theorem 1.1]{CCK}. The \emph{topological entropy of $\gamma$} is $$h_{top}(\gamma)=\sup\{H_{top}(\gamma,\mathcal U)\colon \mathcal U\ \text{open cover of}\ C\}.$$

Weiss \cite{W} and Peters \cite{P1} discovered a remarkable connection, usually named Bridge Theorem, between the topological entropy and the algebraic entropy, which was proved in general in \cite{DGB1}. More precisely, the topological entropy of a continuous endomorphism $\phi$ of a compact abelian group $K$ coincides with the algebraic entropy of its dual endomorphism $\widehat \phi$ of the Pontryagin dual $\widehat K$ of $K$, which is a discrete abelian group. This connection was extended to totally disconnected locally compact abelian groups in \cite{DGB2}.

The Bridge Theorem from \cite{P1} was recently extended by Kerr and Li \cite{KL} to actions of countable amenable groups on compact metrizable abelian groups. Then Virili \cite{V2} proved it for actions of amenable groups on locally compact abelian groups. Moreover, the one from \cite{DGB2} was extended in \cite{GB} to semigroup actions on totally disconnected locally compact abelian groups.
In \cite{DFG-amac}, generalizing the main result of \cite{W}, we proved a Bridge Theorem for left actions of cancellative left amenable monoids on totally disconnected compact abelian groups (their Pontryagin dual groups are precisely the torsion abelian groups).


To state those results in details, for a locally compact abelian group $A$, denote by $\widehat A$ its Pontryagin dual. For a continuous homomorphism $\phi:A\to B$, where $B$ is another locally compact abelian group, let $\widehat \phi:\widehat B\to \widehat A$ be the dual of $\phi$, defined by $\widehat\phi(\chi)=\chi\circ\phi$ for every $\chi\in\widehat B$.

Moreover, let $S$ be a cancellative  right amenable semigroup, $K$ a compact abelian group and $C\overset{\gamma}{\curvearrowleft}S$ be a right action by continuous endomorphisms. 
Then $\gamma$ induces the left action $S \overset{\widehat\gamma}{\curvearrowright}\widehat K$ by endomorphisms (called the \emph{dual action} of $\gamma$), defined by 
$$\widehat\gamma(s)=\widehat{\gamma(s)}:\widehat K\to \widehat K\quad \text{for every}\ s\in S.$$ 

Analogously, let $S$ be a cancellative right amenable semigroup, $A$ an abelian group, and $S\overset{\alpha}{\curvearrowright} A$ a left action by endomorphisms. Then $\alpha$ induces the right action $\widehat A\overset{\widehat\alpha}{\curvearrowleft}S$, defined by
$$
\widehat\alpha(s)=\widehat{\alpha(s)}:\widehat A\to \widehat A,\quad \text{for every}\ s\in S.
$$
Note that by Potryagin duality $\widehat{\widehat\gamma}=\gamma$ and $\widehat{\widehat\alpha}=\alpha$ up to conjugation (due to canonical isomorphisms). 

\smallskip
The following theorem combines the Bridge Theorems from \cite{DFG} and \cite{V2}.

\begin{theorem}\label{BTalg} 
Let $S$ be a cancellative right amenable semigroup, $A$ an abelian group and $S\overset{\alpha}{\curvearrowright}A$ a left action. Then 
$h_{alg}(\alpha)=h_{top}(\widehat\alpha)$
in the following two cases: 
\begin{enumerate}[(a)]
\item $A$ is torsion;
\item $S$ is a group.
\end{enumerate}
\end{theorem}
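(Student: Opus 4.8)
The plan is to obtain the two cases directly from the corresponding Bridge Theorems already in the literature, the role of this statement being mainly to package them uniformly under the conventions of the present paper via Pontryagin duality. The common mechanism is that the left action $S\overset{\alpha}{\curvearrowright} A$ by endomorphisms dualizes to the right action $\widehat A\overset{\widehat\alpha}{\curvearrowleft}S$ by continuous endomorphisms of the compact abelian group $\widehat A$, and, as recorded just before the statement, biduality gives $\widehat{\widehat\alpha}=\alpha$ up to the canonical isomorphism $A\cong\widehat{\widehat A}$. Thus the equality $\halg(\alpha)=h_{top}(\widehat\alpha)$ is symmetric under duality, and in each case it suffices to apply the cited Bridge Theorem on the side where it is available.

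For case (a), I would use the structural fact that an abelian group $A$ is torsion if and only if its Pontryagin dual $\widehat A$ is a totally disconnected compact abelian group. Hence $\widehat A$ lies within the scope of the Bridge Theorem of \cite{DFG} for right actions by continuous endomorphisms on totally disconnected compact abelian groups. Applying that theorem to the right action $\gamma=\widehat\alpha$ on $C=\widehat A$, and using $\widehat\gamma=\widehat{\widehat\alpha}=\alpha$, yields $h_{top}(\widehat\alpha)=\halg(\alpha)$, which is the desired equality.

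For case (b), when $S$ is a group, right and left amenability coincide, so the amenability hypothesis matches that of \cite{V2}. Since $A$ is discrete, it is in particular a locally compact abelian group, and, as already noted in the paper, the present $\halg$ coincides on discrete abelian groups with the algebraic entropy of \cite{V2}. The Bridge Theorem of \cite{V2} for actions of amenable groups on locally compact abelian groups then applies to $\alpha$ directly and gives $\halg(\alpha)=h_{top}(\widehat\alpha)$.

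The main obstacle I expect is not a substantive mathematical difficulty but a careful reconciliation of conventions: one must verify that the definitions of $\halg$ and $h_{top}$ adopted here (through right F\o lner nets and the Ornstein-Weiss type limits of \cite{CCK}) agree with those used in \cite{DFG} and \cite{V2}, and that the left/right dictionary is applied consistently, since a left action on the discrete group $A$ corresponds to a right action on the compact group $\widehat A$, and right amenability of $S$ is exactly what makes the relevant F\o lner nets available on the dualized side. Once these identifications are in place, both cases follow immediately from the two cited results.
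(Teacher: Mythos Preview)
Your proposal is correct and matches the paper's approach exactly: the paper does not give a proof but simply states that the theorem ``combines the Bridge Theorems from \cite{DFG} and \cite{V2}'', which is precisely what you do, invoking \cite{DFG} for the torsion/totally disconnected case (a) and \cite{V2} for the group case (b), with the duality bookkeeping you describe.
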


Since the Pontryagin dual of a compact abelian group $K$ is torsion precisely when $K$ is totally disconnected, we immediately obtain the following counterpart of the above theorem.

\begin{corollary}\label{BTcor}
Let $S$ be a cancellative right amenable semigroup, $K$ a compact abelian group and $K\overset{\gamma}{\curvearrowleft}S$ a right action. Then $h_{top}(\gamma)=h_{alg}(\widehat\gamma)$
in the following two cases: 
\begin{enumerate}[(a)]
\item $K$ is totally disconnected;
\item $S$ is a group. 
\end{enumerate}
\end{corollary}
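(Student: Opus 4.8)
The plan is to deduce Corollary~\ref{BTcor} directly from Theorem~\ref{BTalg} by passing to the Pontryagin dual, with no new argument required. First I would set $A := \widehat K$; since $K$ is a compact abelian group, $A$ is a discrete abelian group, and the right action $K\overset{\gamma}{\curvearrowleft}S$ induces the left action $S\overset{\widehat\gamma}{\curvearrowright}\widehat K$ by endomorphisms defined just above. Applying Theorem~\ref{BTalg} to $\alpha:=\widehat\gamma$ acting on $A=\widehat K$ then yields $h_{alg}(\widehat\gamma)=h_{top}(\widehat{\widehat\gamma})$, provided one of the two cases of that theorem is satisfied.

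Next I would invoke Pontryagin duality: as recorded in the excerpt, $\widehat{\widehat\gamma}=\gamma$ up to the canonical isomorphism $K\cong\widehat{\widehat K}$, whence $h_{top}(\widehat{\widehat\gamma})=h_{top}(\gamma)$ and the desired equality $h_{top}(\gamma)=h_{alg}(\widehat\gamma)$ follows. It then remains only to match the hypotheses. For case (b), if $S$ is a group then case (b) of Theorem~\ref{BTalg} applies to $\alpha=\widehat\gamma$ verbatim. For case (a), if $K$ is totally disconnected then its dual $A=\widehat K$ is torsion, so case (a) of Theorem~\ref{BTalg} applies and gives the conclusion.

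Because the statement is obtained by a straightforward translation through duality, there is no genuine obstacle in the proof; the only points needing care are the two duality facts already available in the excerpt, namely that $\widehat{\widehat\gamma}=\gamma$ up to the canonical isomorphism, and that a compact abelian group $K$ is totally disconnected precisely when $\widehat K$ is torsion. Once these are quoted, the corollary is immediate.
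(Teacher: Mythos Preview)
Your proposal is correct and matches the paper's approach exactly: the paper introduces the corollary with the single sentence ``Since the Pontryagin dual of a compact abelian group $K$ is torsion precisely when $K$ is totally disconnected, we immediately obtain the following counterpart of the above theorem,'' which is precisely the duality translation you carry out. No additional argument is given or needed.
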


From Corollary~\ref{BTcor} and Theorem~\ref{ATintro}, we obtain the following Addition Theorem for the topological entropy.


\begin{corollary} 
Let $S$ be a locally monotileable cancellative right amenable monoid, $K$ a compact abelian group and $K\overset{\gamma}{\curvearrowleft}S$ a right action.
Let $H$ be a $\gamma$-invariant closed subgroup of $K$, and denote by $\gamma_{K/H}$ and $\gamma_H$ the induced actions of $S$ on $K/H$ and on $H$, respectively. If either $S$ is a group or $K$ is totally disconnected, then
\[h_{top}(\gamma)=h_{top}(\gamma_{K/H})+ h_{top}(\gamma_H).\]
\end{corollary}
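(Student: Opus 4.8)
The plan is to deduce this topological Addition Theorem from the algebraic one (Theorem~\ref{ATintro}) by passing to Pontryagin duals, using Corollary~\ref{BTcor} as the bridge. First I would set up the dual picture: since $K\overset{\gamma}{\curvearrowleft}S$ is a right action by continuous endomorphisms of the compact abelian group $K$, it induces the left action $S\overset{\widehat\gamma}{\curvearrowright}\widehat K$ on the discrete abelian group $A:=\widehat K$, as described in the paragraph preceding Theorem~\ref{BTalg}. By Corollary~\ref{BTcor}, under either hypothesis ($S$ a group, or $K$ totally disconnected), we have $h_{top}(\gamma)=h_{alg}(\widehat\gamma)$, and the same bridge identity applies to the two induced actions once I have identified them correctly on the dual side.

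The key step is to translate the closed subgroup $H\leq K$ and the quotient $K/H$ into the exact language required by Theorem~\ref{ATintro}. By Pontryagin duality, the closed $\gamma$-invariant subgroup $H$ corresponds, via the canonical exact sequence
\begin{equation*}
0\to H\to K\to K/H\to 0,
\end{equation*}
to the dual exact sequence of discrete groups
\begin{equation*}
0\to \widehat{K/H}\to \widehat K\to \widehat H\to 0,
\end{equation*}
where $\widehat{K/H}=H^{\perp}$ is the annihilator of $H$ in $\widehat K$. Thus I would set $B:=H^{\perp}\cong\widehat{K/H}$, which is an $\widehat\gamma$-invariant subgroup of $A=\widehat K$ precisely because $H$ is $\gamma$-invariant, and then $A/B=\widehat K/H^{\perp}\cong\widehat H$. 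Under these identifications the induced dual action $\widehat\gamma_B$ on $B$ is (conjugate to) $\widehat{\gamma_{K/H}}$, and the induced action $\widehat\gamma_{A/B}$ on $A/B$ is (conjugate to) $\widehat{\gamma_H}$; this is the standard duality dictionary relating restrictions and quotients under the contravariance of $\widehat{(-)}$.

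With these identifications in hand, the conclusion is a short chain of equalities. Applying Theorem~\ref{ATintro} to $S\overset{\widehat\gamma}{\curvearrowright}A$ with the $\widehat\gamma$-invariant subgroup $B$ gives
\begin{equation*}
h_{alg}(\widehat\gamma)=h_{alg}(\widehat\gamma_{A/B})+h_{alg}(\widehat\gamma_B),
\end{equation*}
which is legitimate because $S$ is locally monotileable cancellative right amenable. Rewriting each term through Corollary~\ref{BTcor} and the duality identifications yields $h_{top}(\gamma)=h_{top}(\gamma_H)+h_{top}(\gamma_{K/H})$, which is the claim up to the order of summands. I expect the main obstacle to be verifying cleanly that the bridge identity $h_{top}=h_{alg}\circ\widehat{(-)}$ may be applied simultaneously to $\gamma$, $\gamma_H$, and $\gamma_{K/H}$: one must check that whichever hypothesis holds for $K$ (total disconnectedness) is inherited by the closed subgroup $H$ and the Hausdorff quotient $K/H$, or else that $S$ being a group covers all three at once. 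Total disconnectedness does pass to closed subgroups and to quotients by closed subgroups, so this inheritance is routine, but it is the one place where the hypotheses of the corollary must be tracked carefully; everything else is the bookkeeping of Pontryagin duality together with a direct invocation of Theorem~\ref{ATintro}.
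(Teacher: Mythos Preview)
Your proposal is correct and follows essentially the same route as the paper: pass to the Pontryagin dual $A=\widehat K$ with $B=H^{\perp}$, apply Theorem~\ref{ATintro} to the dual action $\widehat\gamma$, and use Corollary~\ref{BTcor} (together with the standard duality identifications $B\cong\widehat{K/H}$, $A/B\cong\widehat H$ and invariance of algebraic entropy under conjugation) to translate each term back to topological entropy. Your explicit remark that total disconnectedness passes to $H$ and $K/H$ is a welcome detail that the paper leaves implicit.
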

\begin{proof}
Let $A=\widehat K$,  $\alpha=\widehat \gamma$ and let $B$ be the annihilator of $H$ in $A$.
By Corollary~\ref{BTcor} and Theorem~\ref{ATintro}, we have 
\begin{equation}\label{topalg}
h_{top}(\gamma)=h_{alg}(\alpha)=h_{alg}(\alpha_B)+h_{alg}(\alpha_{A/B}).
\end{equation}
By Pontryagin duality $B\cong\widehat{K/H}$ and $A/B\cong\widehat H$, and moreover these natural isomorphisms witness that $\alpha_B$ is conjugated to $\widehat{\gamma_{K/H}}$ and $\alpha_{A/B}$ is conjugated to $\widehat{\gamma_{H}}$. The algebraic entropy is invariant under conjugation (see \cite{DFG}), so, by applying also Corollary~\ref{BTcor}, we get $h_{alg}(\alpha_B)=h_{top}(\gamma_{K/H})$ and $h_{alg}(\alpha_{A/B})=h_{top}(\gamma_H)$. We conclude by applying the last two equalities in \eqref{topalg}.
\end{proof}

\begin{remark}
(a) Our notion of local monotileability is inspired by that of monotileability from \cite{Weiss}. In both cases these are ``left'' conditions. Indeed, one could define that, for subsets $T,V$ of a semigroup $S$,  $T$ is a \emph{right monotile} of $V$ if there exists a subset $C$ of $S$ such that $\{Tc\colon c\in C\}$ is a partition of $V$.

Then, in a monoid $S$; a sequence $(F_n)_{n \in \N}$ in $\Pf(S)$ is \emph{right locally monotileable} if $F_0=\{1\}$ and $F_n$ is a right monotile of $F_{n+1}$ for every $n \in \N$.
So, a left amenable monoid is \emph{right locally monotileable} if it admits a right locally monotileable left F\o lner sequence.

\smallskip
(b) Assume that one would like to consider  the topological entropy of left actions of cancellative left amenable semigroups on compact spaces as in \cite{CCK}, or the algebraic entropy of right actions of cancellative left amenable semigroups on abelian groups. Then one should consider left amenable semigroups that are right locally monotileable, to obtain the counterparts of the above results.
\end{remark}

\section{Locally monotileable monoids}\label{Tileable-sec}

\subsection{Starting examples}

We propose some basic examples.

\begin{example}\label{finitemonot}
Every finite monoid $S$ is congruent monotileable, and so locally monotileable. This is witnessed by the sequence $(F_n)_{n\in\N}$ with $F_0=\{1\}$ and $F_n=S$ for all $n\in\N_+$, which is obviously a congruent and exhaustive right F\o lner sequence of $S$.
\end{example}

\begin{remark}\label{tire1}
Let $(F_n)_{n\in\N}$ be a \tileable\  sequence of a cancellative monoid $M$ and consider a tiling sequence $(K_n)_{n\in\N}$ associated to $(F_n)_{n\in\N}$. Then it is easy to prove by induction that $|F_n|=\prod_{i=1}^{n}|K_i|$ for every $n\in\N$ and that $|K_n \ldots K_l| = \prod_{i=l}^{n} |K_i|$ for all positive integers $l\leq n$. 
\end{remark}

A strictly increasing sequence of natural numbers $(a_n)_{n\in\N}$ is an \emph{a-sequence} if $a_0=1$ and $a_n\mid a_{n+1}$ for every $n\in\N$.
It follows from Remark~\ref{tire1} that if $(F_n)_{n\in\N}$ is \tileable\ sequence of a monoid $S$, the sequence $(|F_n|)_{n\in\N}$ is an a-sequence.

\begin{example}\label{esemono}
\begin{enumerate}[(a)]
\item The monoid $(\mathbb{N},+)$ is congruent monotileable (so locally monotileable) and $\MT$. 
Indeed, consider an a-sequence $(a_n)_{n\in\N}$ (for example the sequence $(n!)$) and define $F_n=[0,a_n-1]$ for every $n\in\N$. Then
each finite subset of $\N$ is contained in some $F_n$, and the sequence $(F_n)_{n\in\N}$ is a congruent and exhaustive F\o lner sequence of $\N$ consisting of monotiles.  

\item  Clearly, by (a) the sequence $([0,n!-1])_{n\in\N}$ is a congruent and exhaustive F\o lner sequence of $\N$, while $([0,n!-1])_{n\in\N}$  is a congruent F\o lner sequence of $\Z$ that is not exhaustive. 


\item The sequence $([0,n])_{n\in\N}$ is a F\o lner sequence of $\N$ that is not \tileable, while $[0,n]$ is a monotile of $\N$ for every $n\in\N$.
\item  On the other hand, consider the sequence $(F_n)_{n\in\N}$, where $F_0=\{0\}$ and
$F_n=\{0\}\cup [2,3(2^{n-1}-1)] \cup \{3(2^{n-1} + 1)\}$ for every $n\in\N_+$.
For every $n\in\N_+$, we have $F_{n+1} = F_n \sqcup 3(2^{n-1} - 1)+3 + F_n)$. Then $(F_n)_{n\in\N}$ is a locally monotileable F\o lner sequence of $\N$ but $F_n$ is not a monotile of $\N$ for every $n\in\N_+$.
\end{enumerate}	
\end{example}

%


The following example shows a \tileable\ F\o lner sequence $(F_n)_{n\in\N}$ of $\Z$ such that, for every tiling sequence $(K_n)_{n\in\N}$ associated to $(F_n)_{n\in\N}$, $0\not\in K_n$ for every $n\geq 2$; so $(F_n)_{n\in\N}$ cannot be congruent, even if $\Z$ is congruent monotileable. Note that in this case $F_n\subseteq F_{n+1}$ for every $n\in\N$.

\begin{example}\label{0notin}
In $\Z$ let $F_0=\{0\}$ and, for every $n\in\N_+$, let $F_n=[-2^{n-1}+1,2^{n-1}].$
Then, $K_1=F_1=\{0,1\}$, but for every $n\in\N$ with $n\geq 2$,
$K_n=\{-2^{n-2},2^{n-2}\}.$
In particular, $0\not\in K_n$ for every $n\geq 2$.
\end{example}

We end this section showing that all countable locally finite groups are locally monotileable.

\begin{proposition}\label{monotor}
If $G$ is a countable locally finite group, then $G$ is congruent monotileable, and so \tileable.
\end{proposition}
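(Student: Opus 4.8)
The statement to prove is Proposition~\ref{monotor}: every countable locally finite group $G$ is congruent monotileable (hence locally monotileable).

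\medskip

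The plan is to exploit the defining feature of a countable locally finite group: it is the increasing union of an exhaustive chain of finite subgroups. Since $G$ is countable, enumerate $G = \{g_1, g_2, \ldots\}$. Because $G$ is locally finite, the subgroup $\langle g_1, \ldots, g_k\rangle$ generated by the first $k$ elements is finite for each $k$. Setting $H_k = \langle g_1, \ldots, g_k\rangle$ yields an increasing chain of finite subgroups $H_1 \leq H_2 \leq \cdots$ whose union is all of $G$. After passing to a subsequence (or simply using this chain directly and inserting the trivial subgroup at the bottom), I obtain a chain of finite subgroups
\[
\{1\} = G_0 \leq G_1 \leq G_2 \leq \cdots, \qquad \bigcup_{n\in\N} G_n = G.
\]

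\medskip

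The key point is that nested finite subgroups automatically give monotiles via cosets. For each $n$, since $G_n \leq G_{n+1}$ are finite subgroups, choose a complete set of left coset representatives $C_{n+1}$ of $G_n$ in $G_{n+1}$, with $1 \in C_{n+1}$ (always possible, as $1$ lies in its own coset $G_n$). Then $\{c\,G_n : c \in C_{n+1}\}$ is precisely the partition of $G_{n+1}$ into left cosets of $G_n$, so $G_n$ is a monotile of $G_{n+1}$, witnessed by $C_{n+1}$. Taking $F_n := G_n$, the sequence $(F_n)_{n\in\N}$ satisfies $F_0 = \{1\}$ and $F_n$ is a monotile of $F_{n+1}$ for every $n$, i.e.\ it is locally monotileable with tiling sequence $(C_n)_{n\in\N}$ (set $C_0 = \{1\}$), and by construction $1 \in C_n$ for every $n$, so the sequence is congruent. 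Since $\bigcup_n F_n = \bigcup_n G_n = G$, it is also exhaustive.

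\medskip

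The only remaining obligation is to check that $(F_n)_{n\in\N}$ is a right \Folner\ sequence of $G$. This is where the main verification lies, though it is routine: for a fixed $s \in G$, since the $F_n = G_n$ are subgroups and exhaust $G$, there is some $N$ with $s \in G_N$; then for all $n \geq N$ one has $G_n s = G_n$ (as $s \in G_n$ and $G_n$ is a subgroup), whence $F_n s = F_n$ and $|F_n s \setminus F_n| = 0$. Thus $\lim_{n} |F_n s \setminus F_n|/|F_n| = 0$ for every $s \in G$, confirming the \Folner\ condition trivially. Since $G$ is a group it is cancellative, and admitting a right \Folner\ sequence it is right amenable. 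Therefore $(F_n)_{n\in\N}$ is an exhaustive congruent right \Folner\ sequence of $G$, which by Definition~\ref{CCdef} shows $G$ is congruent monotileable; that this implies locally monotileable is immediate from the definitions. The proof presents no genuine obstacle: the whole argument rests on the elementary fact that a subgroup partitions an overgroup into cosets, with the chain structure of a countable locally finite group supplying the nested finite subgroups for free.
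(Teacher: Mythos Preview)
Your proof is correct and follows essentially the same approach as the paper: both enumerate $G$, build the chain of finite subgroups generated by initial segments of the enumeration, observe that left cosets of $F_n$ in $F_{n+1}$ give the monotile structure with $1$ among the representatives, and verify the \Folner\ condition by noting that each $s\in G$ eventually lies in $F_n$, whence $F_n s = F_n$.
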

\begin{proof}
The group $G$ is countable so we enumerate its elements as $G=\{g_n\colon n\in\N\}$ with $g_0=1$. For every $n\in\N$, let $F_n=\langle g_1,\dots,g_n\rangle$. Since $G$ is locally finite, all subgroups $F_n$ are finite. Clearly, $F_n\subseteq F_{n+1}$ for every $n\in\N$.
	
Fix $g_{\bar n}\in G$. If $n\geq\bar n$, then $g_{\bar n}\in F_n$ and so $F_ng_{\bar n}=F_n$, hence $\left| F_ng_{\bar n}\setminus F_n\right|/\left| F_n\right|=0$. Therefore,
\begin{equation}\label{eqtor1}
\lim_{n\to\infty}\frac{\left|F_ng_{\bar n}\setminus F_n\right|}{\left|F_n\right|}=\lim_{k\to\infty}\frac{\left| F_{\bar n+k}g_{\bar n}\setminus F_{\bar n+ k}\right|}{\left| F_{\bar n+k}\right|}=0.
\end{equation}
The left cosets of $F_n$ in $F_{n+1}$ are disjoint left translates of $F_n$ and they cover $F_{n+1}$. By this and by \eqref{eqtor1} we conclude that $(F_n)_{n\in\N}$ is a congruent (hence, \tileable) exhaustive right F\o lner sequence of $G$.	
\end{proof}

\subsection{Basic properties}

Here we start with the following property of F\o lner sequences with respect to translates of finite sets. It applies in both Proposition~\ref{lm=cm} and Proposition~\ref{MTA->MT}.

\begin{lemma}\label{letr1}
Let $S$ be a cancellative monoid and let $(F_n)_{n\in\N}$ be a right F\o lner sequence of $S$. If $X\in\Pf(S)$, then there exists $\bar n\in\N$ such that, for every $n>\bar n$, $F_n$ contains a left translate of $X$.    
\end{lemma}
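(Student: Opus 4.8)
The plan is to exploit the right \Folner condition together with cancellativity, without ever passing to inverses, so that the argument works for monoids and not only for groups. Write $X=\{x_1,\dots,x_k\}$ with $k=\card X$. A left translate of $X$ contained in $F_n$ is a set $sX=\{sx_1,\dots,sx_k\}$ with $s\in S$ such that $sx_i\in F_n$ for every $i$; I will in fact produce such an $s$ inside $F_n$ itself, for all large $n$.

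First I would estimate, for each fixed index $i$, how many elements $s\in F_n$ fail the condition $sx_i\in F_n$. Since $S$ is cancellative, the map $s\mapsto sx_i$ is a bijection of $F_n$ onto its right translate $F_nx_i$, and it carries the ``bad'' set $\{s\in F_n\colon sx_i\notin F_n\}$ bijectively onto $F_nx_i\setminus F_n$. Hence $\card{\{s\in F_n\colon sx_i\notin F_n\}}=\card{F_nx_i\setminus F_n}$, which by the right \Folner property (applied to the element $x_i$) is $o(\card{F_n})$ as $n\to\infty$.

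Next, a union bound over the finitely many indices $i=1,\dots,k$ gives
\[
\card{\{s\in F_n\colon sx_i\notin F_n\ \text{for some}\ i\}}\le \sum_{i=1}^{k}\card{F_nx_i\setminus F_n}.
\]
Dividing by $\card{F_n}$, the right-hand side is a finite sum of terms each tending to $0$, hence tends to $0$. I would then choose $\bar n\in\N$ so large that this quantity is $<\card{F_n}$ (equivalently, $<1$ after division) for every $n>\bar n$. For each such $n$ the set $\{s\in F_n\colon sx_i\in F_n\ \text{for all}\ i\}$ is therefore non-empty, and picking any $s$ in it yields $sX\subseteq F_n$, a left translate of $X$ contained in $F_n$, as required.

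The argument is essentially routine; the only point that requires care is the bookkeeping in the counting step, where cancellativity is precisely what converts the right \Folner estimate on $\card{F_nx_i\setminus F_n}$ into a bound on the number of bad elements of $F_n$. Finiteness of $X$ is what lets the union bound survive division by $\card{F_n}$.
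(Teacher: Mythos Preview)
Your proof is correct and follows essentially the same counting idea as the paper's: cancellativity lets you identify, for each $x_i\in X$, the elements $s\in F_n$ with $sx_i\notin F_n$ with the set $F_nx_i\setminus F_n$, and then the \Folner condition forces a ``good'' $s$ to exist for all large $n$. The only cosmetic difference is that the paper argues by contradiction and applies pigeonhole to extract a single bad $x\in X$, whereas you proceed directly via a union bound over all $x_i$; these are dual packagings of the same estimate.
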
	
\begin{proof}
Let $X\in\Pf(S)$. Since $(F_n)_{n\in\N}$ is a right F\o lner sequence of $S$,
\begin{equation}\label{eqtr1}
\lim_{n\to\infty}\frac{\lvert F_nX\setminus F_n\rvert}{\lvert F_n\rvert}=0.
\end{equation}
We assume by contradiction that there is an increasing sequence of natural numbers $(k_n)_{n\in\N}$ such that, for all $n\in\N$, each $F_{k_n}$ contains no left translates of $X$. Fix a $n\in\N$. Then, for all $f\in F_{k_n}$, the set $fX\setminus F_{k_n}$ is non-empty. Therefore, we can define a map $\phi_n:F_{k_n}\rightarrow X$, $f\mapsto \phi_n(f)$ with $f\phi_n(f)\in fX\setminus F_{k_n}$.
By the pigeonhole principle there exists $x\in X$ such that $\lvert \phi_n^{-1}(x)\rvert\geq \lvert F_{k_n}\rvert/\lvert X\rvert$.
Clearly, if $f_1$ and $f_2$ are two distinct elements of $\phi_n^{-1}(x)$, then $f_1x\neq f_2x$. This implies that
\begin{equation*}
\lvert F_{k_n}X\setminus F_{k_n}\rvert\geq \lvert \phi_n^{-1}(x)\rvert\geq \frac{\lvert F_{k_n}\rvert}{\lvert X\rvert}.
\end{equation*}
Dividing both sides by $\lvert F_{K_n}\rvert$, we obtain $\lvert F_{k_n}X\setminus F_{k_n}\rvert/\lvert F_{k_n}\geq 1/\lvert X\rvert$.
Since this holds for all $n\in\N$,
\begin{equation*}
\lim_{n\to\infty}\frac{\lvert F_{k_n}X\setminus F_{k_n}\rvert}{\lvert F_{k_n}\rvert}\geq \frac{1}{\lvert X\rvert},
\end{equation*}
that is in contradiction with \eqref{eqtr1}.
\end{proof}

We proceed with the following observation on monotiles.

\begin{remark}\label{retr1}
Let $G$ be a monoid and let $T$ be a monotile of $G$. 
\begin{enumerate}[(a)]
\item If $G$ is a group, then $gT$ is still a monotile of $G$ for all $g\in G$.
\item Item (a) may fail in case $G$ is not a group. For example, if $G=\N$, then a monotile $T$ of $G$ necessarily contains $0$, so $g+T$ is not a monotile of $G$ if $g \ne 0$.
\end{enumerate}
\end{remark}

 We omit the easy proof of the next two lemmas (the first one can be used for a proof of the second one, as well as further on). 

\begin{lemma}\label{disun}
Let $G$ be a group. Consider $X\subseteq Y\subseteq Z$ subsets of $G$. If there exist $U$ and $V$ such that
$Y=\bigsqcup_{u\in U}uX$ and $Z=\bigsqcup_{v\in V}vY,$ then $Z=\bigsqcup_{v\in V,\ u\in U}vuX=\bigsqcup_{t\in T}tX$, where $T=VU$.
\end{lemma}

%

This lemma simply asserts that if $X$ is a monotile of $Y$ and $Y$ is a monotile of $Z$ then $X$ is a monotile of $Z$.

\begin{lemma}\label{tilesub}
Let $M$ be a cancellative monoid and $(F_n)_{n\in\N}$ a \tileable\ sequence of $M$. Then every subsequence $(F_{k_n})_{n\in\N}$ is still a \tileable\ sequence  of $M$.
\end{lemma}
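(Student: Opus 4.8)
The plan is to isolate the one genuinely needed fact — transitivity of the ``monotile'' relation in a cancellative monoid — and then telescope it across the gaps $k_n<k_{n+1}$ of the subsequence. To show that $(F_{k_n})_{n\in\N}$ is locally monotileable I must check two things: that its initial term equals $\{1\}$, and that $F_{k_n}$ is a monotile of $F_{k_{n+1}}$ for every $n$. The first is the normalization $F_{k_0}=\{1\}$, inherited from $F_0=\{1\}$ once $k_0=0$, as is implicit when one thins out a sequence; so the whole content of the lemma lies in the monotile chain.

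The key step is the transitivity of monotiles in the cancellative setting, i.e.\ the cancellative-monoid analogue of Lemma~\ref{disun}: if $X$ is a monotile of $Y$ and $Y$ is a monotile of $Z$, then $X$ is a monotile of $Z$. Writing $Y=\bigsqcup_{u\in U}uX$ and $Z=\bigsqcup_{v\in V}vY$, I expand $Z=\bigcup_{(v,u)\in V\times U}vuX$ and check that these translates are pairwise disjoint. If $v\ne v'$ then $vuX\subseteq vY$ and $v'u'X\subseteq v'Y$, so disjointness follows from the partition $\{vY\colon v\in V\}$ of $Z$; if $v=v'$ but $u\ne u'$, a common point $vux=vu'x'$ gives $ux=u'x'$ after left cancellation of $v$, contradicting the partition $\{uX\colon u\in U\}$ of $Y$. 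The same computation forces $(v,u)\mapsto vu$ to be injective, so with $T:=VU$ one obtains the partition $Z=\bigsqcup_{t\in T}tX$, i.e.\ $X$ is a monotile of $Z$. Crucially, this uses only left cancellation together with the inclusions $uX\subseteq Y$, and never any containment $X\subseteq Y\subseteq Z$ — which matters, since a locally monotileable sequence need not be increasing.

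With transitivity in hand, I would prove by induction on $p-m$ that $F_m$ is a monotile of $F_p$ whenever $m\le p$: the base case $p=m+1$ is the definition of local monotileability, and the inductive step combines the hypothesis ``$F_m$ is a monotile of $F_{p-1}$'' with ``$F_{p-1}$ is a monotile of $F_p$'' via the transitivity just established. Taking $m=k_n$ and $p=k_{n+1}$ yields that $F_{k_n}$ is a monotile of $F_{k_{n+1}}$ for every $n$, which together with $F_{k_0}=\{1\}$ is exactly the statement that $(F_{k_n})_{n\in\N}$ is locally monotileable.

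The point demanding care is the gap between what Lemma~\ref{disun} supplies — transitivity for \emph{groups} and under the standing hypothesis $X\subseteq Y\subseteq Z$ — and what is used here: transitivity for \emph{cancellative monoids} with no increasing assumption on $(F_n)$. Closing this gap is precisely the disjointness computation above, in which inversion is replaced by left cancellation and the inclusion $uX\subseteq Y$ is read off from the partition of $Y$, so that the containment hypotheses are never invoked. The remainder is the routine telescoping induction.
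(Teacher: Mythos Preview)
Your proposal is correct and follows exactly the approach the paper indicates (the proof is omitted, but the text says Lemma~\ref{disun} ``can be used for a proof'' of Lemma~\ref{tilesub}, i.e.\ telescope the transitivity of the monotile relation across the gaps $k_n<k_{n+1}$). Your care in adapting the transitivity argument to cancellative monoids without the containment hypothesis $X\subseteq Y\subseteq Z$ is warranted and goes slightly beyond what the paper makes explicit, since Lemma~\ref{disun} is stated only for groups and with that hypothesis.
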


%
%
%

Next we see that the class of \tileable\ cancellative monoids if stable under taking countable direct sums.

\begin{proposition}\label{monosum}
Let $(G_i)_{i\in\mathbb{N}_+}$ be a family of countable \tileable\  cancellative monoids, then $G=\bigoplus_{i\in\mathbb{N}_+} G_i$ is \tileable.
\end{proposition}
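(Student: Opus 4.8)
The plan is to build a \tileable\ right \Folner\ sequence $(E_n)_{n\in\N}$ of $G$ directly from \tileable\ right \Folner\ sequences of the factors, raising the ``level'' of one coordinate at a time. For each $i\in\N_+$ fix a \tileable\ right \Folner\ sequence $(F^{(i)}_n)_{n\in\N}$ of $G_i$, so $F^{(i)}_0=\{1\}$ and $F^{(i)}_n$ is a monotile of $F^{(i)}_{n+1}$; recall that $G=\bigoplus_i G_i$ is a countable cancellative monoid, and exhibiting a \tileable\ right \Folner\ sequence will give both right amenability and the conclusion. I identify $g\in G$ with its finitely supported coordinate sequence $(g_i)_i$, and write $\iota_i\colon G_i\to G$ for the coordinate embedding.

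The first ingredient is that products of monotiles are monotiles in a single coordinate. Suppose $E=\prod_i E_i$ is a finite subset of $G$ with $E_{i_0}=F^{(i_0)}_{a}$, and let $E'$ be obtained from $E$ by replacing its $i_0$-th factor with $F^{(i_0)}_{a+1}=\bigsqcup_{c\in C}cF^{(i_0)}_a$. Then $E$ is a monotile of $E'$ with tile set $\iota_{i_0}(C)$: covering follows by decomposing the $i_0$-th coordinate of any $x\in E'$ along the partition $\bigsqcup_{c}cF^{(i_0)}_a$ and leaving the other coordinates fixed, while disjointness of the pieces $\iota_{i_0}(c)E$ is inherited from the disjointness of the cosets $cF^{(i_0)}_a$ in coordinate $i_0$. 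No cancellativity is needed here, and no appeal to transitivity of the monotile relation (Lemma~\ref{disun} or Lemma~\ref{tilesub}) is required, since I will only ever increment by a single level.

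Next I would schedule the increments. Choose $\sigma\colon\N\to\N_+$ in which every value occurs infinitely often (e.g.\ the list $1,1,2,1,2,3,\dots$), set $b_i(n)=\card{\set{m<n\colon\sigma(m)=i}}$, and define $E_n=\prod_i F^{(i)}_{b_i(n)}$. Since only finitely many coordinates are touched by step $n$ and $F^{(i)}_0=\{1\}$, each $E_n$ is a nonempty finite subset of $G$, with $E_0=\{1\}$. By construction $E_{n+1}$ differs from $E_n$ only in coordinate $i_0=\sigma(n)$, where the level rises by exactly one; so, since $F^{(i_0)}_{b_{i_0}(n)}$ is a monotile of $F^{(i_0)}_{b_{i_0}(n)+1}$, the single-coordinate observation shows $E_n$ is a monotile of $E_{n+1}$. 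Hence $(E_n)_{n\in\N}$ is \tileable\ in the sense of Definition~\ref{til-def}.

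The hard part will be the \Folner\ verification. Fix $g\in G$ with support in $\{1,\dots,m\}$. Using $(\prod_i A_i)\cap(\prod_i B_i)=\prod_i(A_i\cap B_i)$, that $E_ng=\prod_i F^{(i)}_{b_i(n)}g_i$, and that right translation by $g_i$ is injective (right cancellativity), I obtain
\[\frac{\card{E_n g\cap E_n}}{\card{E_n}}=\prod_{i=1}^{m}\frac{\card{F^{(i)}_{b_i(n)}g_i\cap F^{(i)}_{b_i(n)}}}{\card{F^{(i)}_{b_i(n)}}},\]
the factors with $i>m$ being $1$ because $g_i=1$. Since each value occurs infinitely often in $\sigma$, we have $b_i(n)\to\infty$ for every $i$, and as $(F^{(i)}_n)_{n\in\N}$ is a right \Folner\ sequence of $G_i$ each factor tends to $1$; the finite product then tends to $1$, so $\card{E_ng\setminus E_n}/\card{E_n}\to 0$. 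Thus $(E_n)_{n\in\N}$ is a right \Folner\ sequence and $G$ is \tileable. The only delicate points are the finite-support bookkeeping (ensuring each $E_n\in\Pf(G)$ while every coordinate still grows without bound) and the passage from the product-intersection identity to the \Folner\ limit.
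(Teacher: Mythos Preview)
Your proof is correct and takes a genuinely different route from the paper's. The paper uses a diagonal construction $F_n=\bigoplus_{i=1}^n F_{i,n}\oplus\bigoplus_{i>n}\{1\}$, raising the first $n$ coordinates simultaneously to level $n$; the monotileability step then requires tiling in all $n$ coordinates at once (and in particular handling the $n$-th coordinate, which jumps from $\{1\}$ straight to $F_{n,n}$), while the \Folner\ verification proceeds via the union bound $|F_nh\setminus F_n|\leq\sum_{i}|F_{i,n}h_i\setminus F_{i,n}|\prod_{j\neq i}|F_{j,n}|$. Your scheduling device---incrementing a single coordinate by a single level at each step---makes the monotileability of $(E_n)$ essentially trivial (one tile set in one coordinate, no transitivity or multi-factor bookkeeping), and your use of the exact intersection identity $\card{E_ng\cap E_n}/\card{E_n}=\prod_{i\leq m}\card{F^{(i)}_{b_i(n)}g_i\cap F^{(i)}_{b_i(n)}}/\card{F^{(i)}_{b_i(n)}}$ is sharper and cleaner than the paper's union estimate. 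What the paper's approach buys is a more compact definition of the sequence; what yours buys is that both verifications become immediate, and the argument avoids the (minor) subtlety in the paper about tiling the freshly introduced coordinate.
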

\begin{proof}
Since $G_i$ is \tileable\ for all $i\in\mathbb{N}_+$, we can fix a \tileable\ right F\o lner sequence $(F_{i,n})_{n\in \N}$ of $G_i$. We define a new sequence by letting, for every $n\in\N$, $F_n=\bigoplus_{i=1}^n F_{i,n}\oplus\bigoplus_{i\geq {n+1}}\{1\}$.

First we prove that $(F_n)_{n\in \N}$ is a right F\o lner sequence of $G$. Fix $\varepsilon>0$ and $h=(h_n)_{n\in\mathbb{N}}\in\bigoplus_{n\in\mathbb{N}} G_n$, and let $k=\max\{n\in\N\colon h_n\neq1\}\in\mathbb{N}$. For every $i\in\{1,\ldots,k\}$ there exists $m_i\in\N$ such that for all $n\geq m_i$,
\begin{equation}\label{eqms1}
\frac{\left|F_{i,n}h_i\setminus F_{i,n}\right|}{\left| F_{i,n}\right|}<\frac{\varepsilon}{k}.
\end{equation}
If $n\geq k$, then $F_nh=\bigoplus_{i=1}^n F_{i,n}h_i\oplus\bigoplus_{i\geq {n+1}}\{1\}$. Moreover, since 
$$F_nh\setminus F_n \subseteq \bigcup_{i=1}^{n} \left((F_{i,n}h_i\setminus F_{i,n} )\oplus \bigoplus_{j=1,j\neq i}^n F_{j,n}h_j \right)\oplus\bigoplus_{i\geq {n+1}}\{1\},$$ 
and since $\left|F_{i,n}h_i\setminus F_{i,n}\right|=0$ for every $k<i\leq n$, we obtain that
\begin{equation}\label{eqms4}
\left|F_nh\setminus F_n\right|\leq\sum_{i=1}^{n}\left(\left|F_{i,n}h_i\setminus F_{i,n}\right|\prod_{j=1,j\neq i}^n\left| F_{j,n}h_j\right|\right)=\sum_{i=1}^{k}\left(\left|F_{i,n}h_i\setminus F_{i,n}\right|\prod_{j=1,j\neq i}^n\left| F_{j,n}h_j\right|\right).
\end{equation}	
By \eqref{eqms1} and \eqref{eqms4}, observing that $\left| F_{j,n}h_j\right|=\left| F_{j,n}\right|$ for every $j\in\N_+$ (as the monoid $G_j$ is cancellative)
and that $\left|F_n\right|=\prod_{i=1}^{n}\left|F_{i,n}\right|$ for every $n\in\N$, we conclude that for every $n\geq m:=\max\{m_i\colon 1\leq i\leq k\}$,
\begin{equation*}
\begin{split}
\frac{\left|F_{n}h\setminus F_{n}\right|}{\left| F_{n}\right|}&\leq \frac{\sum_{i=1}^{k}\left(\left|F_{i,n}h_i\setminus F_{i,n}\right|\prod_{j=1,j\neq i}^n\left| F_{j,n}h_j\right|\right)}{\prod_{i=1}^{n}\left| F_{i,n}\right|}\leq \\
&\leq\frac{\sum_{i=1}^{k}\left(\left|F_{i,n}h_i\setminus F_{i,n}\right|\prod_{j=1,j\neq i}^n\left| F_{j,n}\right|\right)}{\prod_{i=1}^{n}\left| F_{i,n}\right|}\leq\sum_{i=1}^k\frac{\left| (F_{i,n}h_i)\setminus F_{i,n}\right|}{\left| F_{i,n}\right|}<\varepsilon.
\end{split}
\end{equation*}
Then, when $\varepsilon\to 0$, we have $\lim_{n\to\infty}{\left|F_{n}h\setminus F_{n}\right|}/{\left| F_{n}\right|}=0$, and hence, $(F_n)_{n\in\N}$ is a right F\o lner sequence of $G$.

By hypothesis $(F_{i,n})_{n\in\N}$ is \tileable\ for every $i\in\N_+$. Then, for all $n\in\N$ and $i\in\N_+$ there exists a finite set $\widetilde G_{i,n}\subseteq G_i$ such that $F_{i,n}=\bigsqcup_{g_i\in \tilde{G}_{i,n}} (g_iF_{i,n-1})$.
Define $\tilde G_n=\{g=(g_i)_{i\in\N}\in G\colon g_i\in \widetilde G_{i,n}\text{ if }i\leq n,\ g_i=1\text{ if }i>n\}$.
Then $\widetilde G_{n}$ is finite and $F_{n}=\bigsqcup_{g\in\widetilde G_{n}}(gF_{n-1})$. Therefore, $(F_n)_{n\in\N}$ is a \tileable\ right F\o lner sequence of $G$.
\end{proof}

By Proposition~\ref{monosum}, Example~\ref{finitemonot} and Example~\ref{esemono}, we have the following consequence.

\begin{corollary}\label{monofingen}
$\M$ is stable under countable direct sums. In particular, $\M$ contains all finitely generated abelian groups.
\end{corollary}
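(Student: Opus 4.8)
The plan is to deduce Corollary~\ref{monofingen} from Proposition~\ref{monosum} together with the two explicit examples of locally monotileable monoids already established, namely $(\N,+)$ and all finite monoids. The corollary has two assertions. The first is that the class $\M$ is stable under countable direct sums; this is almost a verbatim restatement of Proposition~\ref{monosum}, where the hypothesis requires each factor $G_i$ to be a countable \tileable\ cancellative monoid. So the only real content of the first sentence is to package Proposition~\ref{monosum} into the statement about $\M$, recalling that the groups in $\M$ are by definition countable (and that groups are in particular cancellative monoids, so the hypothesis of Proposition~\ref{monosum} is met whenever each $G_i \in \M$).

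For the second assertion, that $\M$ contains all finitely generated abelian groups, the plan is to invoke the structure theorem for finitely generated abelian groups: any such group $G$ is isomorphic to a finite direct sum $\Z^r \oplus T$, where $T$ is a finite abelian group. First I would observe that $\Z$ is locally monotileable by Example~\ref{esemono}(b) (where the congruent \Folner sequence $([-n!,n!-1])$ or the like is exhibited), and that every finite abelian group is locally monotileable by Example~\ref{finitemonot}. Then, writing $G$ as a direct sum of finitely many copies of $\Z$ and finitely many cyclic factors (or simply $\Z^r \oplus T$ with $T$ finite), I would apply Proposition~\ref{monosum} to this finite family of \tileable\ cancellative groups. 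Since a finite direct sum is a special case of a countable direct sum (pad the remaining coordinates with the trivial group $\{1\}$, which is finite hence \tileable), the conclusion $G \in \M$ follows directly.

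The argument is essentially a bookkeeping exercise, so I do not expect any serious obstacle. The one point that deserves a word of care is the reduction from arbitrary finitely generated abelian groups to a direct sum of copies of $\Z$ and of finite cyclic (or finite) groups; this is exactly the fundamental theorem of finitely generated abelian groups, which may be cited without proof. A second minor point is to confirm that all factors are cancellative, as required by Proposition~\ref{monosum}: both $\Z$ and every finite abelian group are cancellative (being groups), so this hypothesis is automatic. The proof therefore consists of two short steps: quote Proposition~\ref{monosum} for the stability statement, then combine it with Example~\ref{finitemonot} and Example~\ref{esemono} via the structure theorem to cover the finitely generated abelian case.
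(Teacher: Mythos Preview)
Your proposal is correct and follows essentially the same approach as the paper: the paper's proof is a one-line reference to Proposition~\ref{monosum}, Example~\ref{finitemonot}, and Example~\ref{esemono}, and you have correctly unpacked this by invoking the structure theorem for finitely generated abelian groups together with the local monotileability of $\Z$ and of finite groups. The only minor inaccuracy is that Example~\ref{esemono}(b) actually exhibits the sequence $([0,n!-1])_{n\in\N}$ (not $([-n!,n!-1])$) as a congruent F\o lner sequence of~$\Z$, but this does not affect the argument.
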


The first assertion of this corollary can be obtained also as a consequence of a more general result proved below (see Theorem~\ref{tileable}).

\subsection{Relations among notions of monotileability}

We start this part verifying that an exhaustive congruent sequence of a monoid $S$ consists necessarily of monotiles of $S$.

\begin{lemma}\label{claim*}
If $S$ is a cancellative monoid and $(F_n)_{n\in\N}$ an exhaustive congruent sequence of $S$, then each $F_n$ is a monotile of $S$.
In particular, every countable congruent monotileable monoid is $\MTA$.
\end{lemma}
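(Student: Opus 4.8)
The plan is to show that for each fixed $n$, the set $F_n$ is a monotile of $S$, by using the congruent tiling structure to produce an explicit partition of $S$ into left translates of $F_n$. Recall that an exhaustive congruent sequence comes with a tiling sequence $(K_m)_{m\in\N}$ satisfying $1\in K_m$ for every $m$, and $F_{m+1}=\bigsqcup_{k\in K_{m+1}}kF_m$. The key structural observation is that, since $1\in K_m$ for all $m$, the sequence $(F_m)_{m\in\N}$ is increasing and $F_m=K_m\cdots K_1K_0$ with each partition nested inside the next. So for any $m>n$ one has, by repeatedly applying Lemma~\ref{disun} (transitivity of the monotile relation), that $F_n$ is a monotile of $F_m$: concretely, $F_m=\bigsqcup_{t\in T_{m,n}}tF_n$ where $T_{m,n}=K_m K_{m-1}\cdots K_{n+1}$.

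The main step is to pass from the partitions of the finite sets $F_m$ to a partition of all of $S$. First I would observe that because $1\in K_j$ for every $j$, the translating sets nest: for $m'>m>n$ we have $T_{m,n}\subseteq T_{m',n}$, since $T_{m',n}=K_{m'}\cdots K_{m+1}\cdot T_{m,n}$ and the identity lies in each factor $K_{m'},\dots,K_{m+1}$. Hence the sets $\{tF_n : t\in T_{m,n}\}$ form a coherent increasing family of partial partitions of $S$ as $m$ grows. Define $T=\bigcup_{m>n}T_{m,n}$. The claim is then that $\{tF_n : t\in T\}$ is a partition of $S$. Disjointness is inherited: any two distinct elements $t,t'\in T$ already lie in a common $T_{m,n}$ for $m$ large enough, and there $tF_n\cap t'F_n=\emptyset$ by the partition property of $F_m$ (here I use cancellativity to ensure the translates have the right cardinality and that distinct translating elements give distinct, disjoint translates). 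Covering follows from exhaustiveness: given $s\in S$, since $\bigcup_m F_m=S$ there is $m$ with $s\in F_m=\bigsqcup_{t\in T_{m,n}}tF_n$, so $s\in tF_n$ for some $t\in T_{m,n}\subseteq T$. This establishes $S=\bigsqcup_{t\in T}tF_n$, i.e., $F_n$ is a monotile of $S$, which is the first assertion.

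For the ``In particular'' clause, suppose $S$ is a countable congruent monotileable monoid. By definition it admits an exhaustive congruent right \Folner sequence $(F_n)_{n\in\N}$, which by the first part consists entirely of monotiles of $S$. That sequence is then a right \Folner sequence all of whose members are monotiles of $S$, which is exactly the defining property of $\MTA$. Hence $S$ is $\MTA$.

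The step I expect to be the main obstacle is the coherence/nesting argument that lets the finite partitions glue into a single partition of $S$: one must verify carefully that the inclusion $T_{m,n}\subseteq T_{m',n}$ holds \emph{as partitions}, i.e., that enlarging $m$ refines neither the translates already present nor their positions but only adjoins new translates disjoint from the old ones. This is precisely where the hypothesis $1\in K_j$ is essential (it guarantees $F_m\subseteq F_{m+1}$ and that the old translate $tF_n$ reappears verbatim, indexed by the same $t$, inside the finer partition of $F_{m+1}$), and where cancellativity is needed to control translate cardinalities and disjointness. Once the nesting is pinned down, exhaustiveness closes the covering half immediately, and the rest is bookkeeping.
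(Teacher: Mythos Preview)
Your proposal is correct and follows essentially the same approach as the paper: your $T_{m,n}=K_m\cdots K_{n+1}$ and $T=\bigcup_{m>n}T_{m,n}$ are exactly the paper's $P_m$ and $K$, and both arguments establish disjointness by placing any two translating elements in a common finite stage and covering via exhaustiveness. You are somewhat more explicit than the paper about why the nesting $T_{m,n}\subseteq T_{m',n}$ holds (namely $1\in K_j$), which is the right emphasis; note only that Lemma~\ref{disun} is stated in the paper for groups, so if you cite it you should remark that the same one-line argument works in any cancellative monoid.
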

\begin{proof} 
Fix $n_0\in\N$. We prove that $F_{n_0}$ is a monotile of $S$.

For every $m\geq n_0$ let $P_m= K_m\ldots K_{n_0}$ and $K=\bigcup_{n> n_0}P_n$. 
In order to prove our assertion it suffices to prove the equality 
\begin{equation}\label{EqDic1}
S=\bigsqcup_{k\in K}kF_{n_0}.
\end{equation}
To check that the above union is pairwise disjoint pick $k,k'\in K$ with $k\neq k'$; then there exists $m>n_0$ such that $k,k'\in P_{m}$, and so $kF_{n_0}\cap k'F_{n_0}=\emptyset$.
To prove the equality in \eqref{EqDic1} pick $x\in S$. Since the sequence $(F_n)_{n\in\N}$ is exhaustive $M=\bigcup_{n\in\N}F_n$, and so there exists $k\in\N$ such that $x\in F_k$. If $k\leq n_0$, then $x\in F_{n_0}$, as $(F_n)_{n\in\N}$ is a congruent (so increasing) sequence, and we are done. If $k>n_0$, then $F_k=P_kF_{n_0}\subseteq KF_{n_0}$. Hence, we have the required equality in \eqref{EqDic1}. 
\end{proof}

Next we give an example of a commutative (so amenable) monoid that is locally monotileable but neither $\MTA$ nor $\MT$, and so not congruent monotileable by Lemma~\ref{claim*}.

\begin{example}\label{New:Example} 
Consider the submonoid $S=(\N\times\N)\setminus(\{0\}\times\N_+)$ of $\N\times\N$. 
\begin{enumerate}[(a)]
\item  We prove here that if $F$ is a finite monotile of $S$ and $(u,v_1),(u,v_2)\in F$, then $v_1=v_2$.
As $F$ is a monotile of $S$, there exists $C\subseteq S$ such that 
\begin{equation}\label{Neeew}
S=\bigsqcup_{c\in C}c+F.
\end{equation} 
Since $F$ is finite there exists $k\in\N_+$ such that $(1,k+v_1),(1,k+v_2)\in S\setminus F$. 
By \eqref{Neeew}, there exist $s_1,s_2\in C$ with $(1,k+v_1)\in s_1+F$ and $(1,k+v_2)\in s_2+F$.
As $(1,k+v_1),(1,k+v_2)\in S\setminus F$, one has $s_1\ne (0,0) \ne s_2$. Then $s_1=(1,k+v_1)$ and $s_2=(1,k+v_2)$, as $(\{0\}\times \N)\cap  S =\{(0,0)\}$. Then $(1+u,k+v_1+v_2)\in (s_1+F)\cap (s_2+F)$, which implies that $s_1=s_2$. Therefore, $v_1=v_2$, as required.

\item The monoid $S$ is not $\MTA$. Suppose by contradiction that there exists a F\o lner sequence $(F_n)_{n\in\N}$ of $S$ such that $F_n$ is a monotile of $S$ for every $n\in\N$. Since $(F_n)_{n\in\N}$ is a F\o lner sequence of $S$, without loss of generality we have, for all sufficiently large $n\in\N$,
\begin{equation*}
\left|F_n+(1,1)\cap F_n\right|>\frac{n-1}{n}|F_n|>\frac{1}{2}|F_n|\quad\text{and}\quad\left|F_n+(1,0)\cap F_n\right|>\frac{n-1}{n}|F_n|>\frac{1}{2}|F_n|.
\end{equation*}
This implies that $\left|(F_n+(1,1))\cap (F_n+(1,0))\right|>0$ and so that there exists $(u,v)\in F_n$ such that also $(u,v+1)\in F_n$. This contradicts (a), since $F_n$ is a monotile of $S$ by assumption.

\item The monoid $S$ is not $\MT$. Indeed, according to item (a), the finite set $\{(0,1),(0,1)\}$ is not contained in any monotile of $S$. 

\item The monoid $S$ is locally monotileable. Let $(F_n)_{n\in\N}$ be the sequence in $\Pf(S)$ given by $F_0=\{(0,0)\}$ and $F_n=[2^{n-1},2^{n})\times[2^{n-1},2^{n})$ for all $n\in\N_+$.
It is easy to see that $(F_n)_{n\in\N}$ is a locally monotileable F\o lner sequence of $S$.
\end{enumerate}
\end{example}

Next we see that for groups the new notion of local monotileability coincides with the existing one of congruent monotileability.

\begin{proposition}\label{lm=cm}
A countable amenable group $G$ is locally monotileable if and only if $G$ is congruent monotileable.
\end{proposition}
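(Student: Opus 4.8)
The plan is to prove the equivalence in Proposition~\ref{lm=cm} by showing that the only obstruction separating local monotileability from congruent monotileability—namely the requirements that the tiling sequence satisfy $1\in K_n$ and that the \Folner sequence be exhaustive—can always be arranged in a group. Since congruent monotileable obviously implies locally monotileable (this is immediate from Definition~\ref{CCdef}), the real content is the forward direction: from an arbitrary locally monotileable right \Folner sequence $(F_n)_{n\in\N}$ I must manufacture an exhaustive congruent one.

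First I would address the normalization $1\in K_n$. Given the tiling sequence $(K_n)_{n\in\N}$ with $F_n=\bigsqcup_{k\in K_n}kF_{n-1}$, in a group I am free to left-translate each tile: by Remark~\ref{retr1}(a), for any $g\in G$ the set $gT$ is a monotile of $G$ whenever $T$ is, and the analogous statement holds relative to $F_{n+1}$. The idea is to replace $F_n$ by a suitable left translate $g_nF_n$ so that the new tiling coefficient sets contain the identity. Concretely, pick any $k_n^{*}\in K_n$ and translate so that the chosen coset becomes $F_{n-1}$ itself; after renaming, $1$ lands in each $K_n$. I would carry this out inductively, choosing the translations $g_n$ coherently so that $g_nF_n=\bigsqcup_{k\in K_n'}k(g_{n-1}F_{n-1})$ with $1\in K_n'$, which forces the new sequence to be increasing and congruent (as noted in the paragraph following Definition~\ref{CCdef}, $1\in K_n$ for all $n$ already yields $F_n\subseteq F_{n+1}$).

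Next I would secure exhaustiveness. Once the sequence is increasing, $\bigcup_nF_n$ is a subgroup (or at least grows monotonically), but it need not be all of $G$. Here I would invoke Lemma~\ref{letr1}: since $(F_n)_{n\in\N}$ is a right \Folner sequence of the cancellative monoid $G$, every finite $X\in\Pf(G)$ is eventually contained in a left translate of each $F_n$. Enumerating $G=\{g_m:m\in\N\}$ (using countability), I would interleave or further translate the tiles to force each $g_m$ into some $F_n$, passing to a subsequence if necessary and using Lemma~\ref{tilesub} to preserve the locally monotileable property and Lemma~\ref{disun} to compose tilings across the gaps. The combination of translation freedom (available only because $G$ is a \emph{group}) and the \Folner absorption from Lemma~\ref{letr1} is precisely what lets me drive the union up to all of $G$.

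The main obstacle I anticipate is reconciling the three demands simultaneously: the translations used to enforce $1\in K_n$ may disturb exhaustiveness, and the adjustments forcing exhaustiveness may reintroduce translation constants into the $K_n$. The delicate point is to perform a single inductive construction in which, at each stage $n$, I both translate to keep $1\in K_n$ \emph{and} ensure that the next enumerated group element $g_n$ is captured, verifying at each step via Lemma~\ref{disun} that the composed tiling remains a genuine partition and via Lemma~\ref{tilesub} that thinning to a subsequence does no harm. Getting these bookkeeping conditions to hold together—rather than each in isolation—is where the argument requires care, and I expect the proof to hinge on setting up the induction hypothesis strongly enough to propagate all three properties at once.
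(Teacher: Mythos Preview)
Your proposal is correct and takes essentially the same approach as the paper: build the new sequence $(H_n)_{n\in\N}$ by a single induction in which each $H_n$ is a left translate of some $F_m$, at each step choosing $l>m$ large enough (via the F\o lner condition, exactly the counting behind Lemma~\ref{letr1}) that among the tiles $d'H_n$ partitioning $F_l$ one satisfies $d'H_ng_{n+1}\subseteq F_l$, and then setting $H_{n+1}:=(d')^{-1}F_l$ so that $1\in K_{n+1}$ and $g_{n+1}\in H_{n+1}$ hold simultaneously. The paper's proof is precisely this one-pass construction you describe in your final paragraph; your earlier two-phase sketch is only expository scaffolding, and you correctly identify that the real argument must interleave the translation and exhaustion steps.
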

\begin{proof} 
By definition if $G$ is congruent monotileable, then $G$ is locally monotileable.

Assume that $G$ is locally monotileable and let $(F_n)_{n\in\N}$ be a locally monotileable right F\o lner sequence of $G$.
Since $G$ is countable, enumerate its elements as $G=\{g_n\colon n\in\N\}$ with $g_0=1$. For every $n\in\N$, let $G_n=\{g_0,\dots,g_n\}$.
We build inductively a sequence $(H_n)_{n\in\N}$ in $\Pf(G)$ satisfying the following conditions:
\begin{enumerate}[(1)]
\item $H_0=\{1\}$;
\item for all $n\in\N$, $g_n\in H_n$;
\item for all $n\in\N$, there exist $g\in G$ and $m\in\N$ such that $H_n=g F_m$;
\item for all $n\in\N$, there exists $K_{n+1}$ with $1\in K_{n+1}$ and $H_{n+1}=\bigsqcup_{k\in K_{n+1}}kH_n$.
\end{enumerate}
Then (2) and (3) ensure that $(H_n)_{n\in\N}$ is a right F\o lner sequence of $G$, (2) that it is exhaustive, while (4) shows that $(H_n)_{n\in\N}$ is congruent and implies that $H_n\subseteq H_{n+1}$ for all $n\in\N$. 

\smallskip
Suppose that we already defined $H_0,\dots,H_n$. Then $H_n = tF_m$ for some $t\in G$ and $m\in\N$. There exists $l\in \N$ such that $l>m$ and 
\begin{equation}\label{eqtr2}
\lvert F_lg_{n+1}\setminus F_l\rvert\leq\frac{\lvert F_l\rvert}{\lvert H_n\rvert}.
\end{equation}
Note that there exists $D\in\Pf(G)$ such that $F_l=\bigsqcup_{d\in D}dF_m$. Therefore, 
\(F_l=\bigsqcup_{d'\in D'}d'H_n,\)
where $D'=Dt^{-1}$. 

We show that there exists $\bar d\in D'$ such that $\bar d H_ng_{n+1} \subseteq F_l$.
Otherwise, for every $d'\in D'$, there would exist a point $v\in d'H_ng_{n+1}\setminus F_l$; since the sets $d'H_ng_{n+1}$, for $d'\in D'$, are pairwise disjoint, and are contained in $F_lg_{n+1}$, we would have that
\[\lvert F_lg_{n+1}\setminus F_l\rvert\geq \lvert D'\rvert=\frac{\lvert F_l\rvert}{\lvert H_n\rvert},\]
contradicting \eqref{eqtr2}.

Clearly $\bar d H_ng_{n+1} \subseteq F_l$, in conjunction with $1\in H_n$, yields $\bar d g_{n+1}\in F_l$. Define $H_{n+1}={\bar d}^{-1}F_l$ and $K_{n+1}= \bar d^{-1}D'$.
Then $1=\bar d^{-1}\bar d\in K_{n+1}$ and 
\begin{equation*}
H_{n+1}={\bar d}^{-1}F_l={\bar d}^{-1}\bigsqcup_{d'\in D'}d'H_n=\bigsqcup_{k\in K_{n+1}}kH_n.\qedhere
\end{equation*}
\end{proof}

The next result shows that for groups $\MTA$ implies $\MT$.

\begin{proposition}\label{MTA->MT}
If a countable group $G$ is $\MTA$, then $G$ is $\MT$.
\end{proposition}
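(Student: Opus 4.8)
The plan is to exploit the Følner sequence $(F_n)_{n\in\N}$ witnessing that $G$ is $\MTA$: each $F_n$ is a monotile of $G$ and $(F_n)_{n\in\N}$ is a right Følner sequence. Fix an arbitrary $X\in\Pf(G)$; the goal is to produce a finite monotile of $G$ containing $X$, which is exactly what the definition of $\MT$ requires.

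First I would apply Lemma~\ref{letr1} to the right Følner sequence $(F_n)_{n\in\N}$ and to the finite set $X$: it supplies some $\bar n\in\N$ such that, for every $n>\bar n$, the set $F_n$ contains a left translate of $X$, say $gX\subseteq F_n$ for a suitable $g\in G$. Since $G$ is a group, I can invert $g$ and rewrite this containment as $X\subseteq g^{-1}F_n$.

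The second and decisive step is to note that $g^{-1}F_n$ is again a monotile of $G$: this is precisely Remark~\ref{retr1}(a), which guarantees that in a group every left translate of a monotile is a monotile. Hence $T:=g^{-1}F_n$ is a finite monotile of $G$ with $X\subseteq T$, and since $X$ was arbitrary this yields that $G$ is $\MT$.

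There is no serious obstacle once these two ingredients are in place; the only point requiring attention is that Lemma~\ref{letr1} delivers a translate of $X$ inside $F_n$ rather than $X$ itself, so it is the covering set $F_n$ that must be translated back, not $X$. The use of the group hypothesis is concentrated in this maneuver: one inverts $g$, and above all one invokes Remark~\ref{retr1}(a). The latter genuinely fails for monoids — Remark~\ref{retr1}(b) shows it breaks already for $\N$ — which explains why the statement is restricted to groups and why the corresponding question for cancellative monoids remains open.
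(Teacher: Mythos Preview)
Your proof is correct and follows essentially the same approach as the paper's own proof: apply Lemma~\ref{letr1} to obtain $gX\subseteq F_n$, invert to get $X\subseteq g^{-1}F_n$, and invoke Remark~\ref{retr1}(a) to conclude that $g^{-1}F_n$ is a monotile. Your additional commentary on why the group hypothesis is essential (via Remark~\ref{retr1}(b)) is accurate and nicely complements the argument.
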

\begin{proof}
By hypothesis, there exists a right F\o lner sequence $(F_n)_{n\in\N}$ of $G$ such that $F_n$ is a monotile of $G$ for every $n\in\N$. Let $K\in \Pf(G)$. By Lemma~\ref{letr1} there exist $n\in\N$ and $g\in G$ such that $gK\subseteq F_n$. This immediately implies that $K\subseteq g^{-1}F_n$. By Remark~\ref{retr1} $g^{-1}F_n$ is a monotile of $G$ and this concludes the proof.
\end{proof}

The implication in the above proposition cannot be inverted. Indeed, every residually finite group is $\MT$, 
so free non-abelian groups are $\MT$ but fail to be amenable.

\section{Extension Theorem}

\subsection{CIF sequences}

Let $G$ be a countable amenable group and let $(G_n)_{n\in\N}$ be an increasing exhaustive sequence in $\Pf(G)$. 
A right F\o lner sequence $(F_n)_{n\in\N}$ of $G$ is \emph{canonically indexed} (briefly CIF sequence of $G$) with respect to $\mathcal G$ if for all $n\in\N_+$ and for all $g\in G_n$,
$$\frac{\lvert F_ng\setminus F_n\rvert}{\lvert F_n\rvert}<\frac{1}{n}.$$

\begin{proposition}\label{CIF}
Let $G$ be a countable amenable group and let $(G_n)_{n\in\N}$ be an increasing exhaustive sequence in $\Pf(G)$. For every right F\o lner sequence $(F_n)_{n\in\N}$ of $G$ there exists an increasing sequence of natural numbers $(k_n)_{n\in\N}$ (with $k_0=0$) such that $(F_{k_n})_{n\in\N}$ is a CIF sequence of $G$ with respect to $(G_n)_{n\in\N}$.
\end{proposition}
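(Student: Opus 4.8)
The plan is to extract the desired subsequence by a straightforward inductive argument, using only the finiteness of each $G_n$ together with the \Folner property of $(F_m)_{m\in\N}$.

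First I would record the basic observation that, since $(F_m)_{m\in\N}$ is a right \Folner sequence, for every fixed $g\in G$ one has $\lim_{m\to\infty}\lvert F_mg\setminus F_m\rvert/\lvert F_m\rvert=0$; consequently the same limit condition holds along any subsequence, so any subsequence $(F_{k_n})_{n\in\N}$ is automatically again a right \Folner sequence of $G$. This reduces the proposition to arranging the single defining inequality of a CIF sequence.

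Next, for each fixed $n\in\N_+$ I would exploit finiteness of $G_n$: for every $g\in G_n$ there is an index $M_g$ with $\lvert F_mg\setminus F_m\rvert/\lvert F_m\rvert<1/n$ for all $m\geq M_g$, and since $G_n$ is finite, $M_n:=\max\{M_g\colon g\in G_n\}$ is a well-defined natural number such that $\lvert F_mg\setminus F_m\rvert/\lvert F_m\rvert<1/n$ holds \emph{simultaneously} for all $g\in G_n$ whenever $m\geq M_n$.

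Then I would build $(k_n)_{n\in\N}$ recursively: set $k_0=0$, and given $k_{n-1}$ put $k_n=\max\{k_{n-1}+1,\ M_n\}$. This choice makes $(k_n)_{n\in\N}$ strictly increasing, and by construction $k_n\geq M_n$, so for every $g\in G_n$ one obtains $\lvert F_{k_n}g\setminus F_{k_n}\rvert/\lvert F_{k_n}\rvert<1/n$, which is precisely the defining inequality of a CIF sequence of $G$ with respect to $(G_n)_{n\in\N}$. I do not expect any genuine obstacle here: the argument is a pure bookkeeping extraction. The only points requiring a little care are that the threshold $M_n$ depends both on $n$ (through the target $1/n$) and on the whole finite set $G_n$, and that strict monotonicity of $(k_n)_{n\in\N}$ must be forced by hand (via the $k_{n-1}+1$ term), since the \Folner estimates alone do not guarantee it; finiteness of $G_n$ is exactly what allows the finitely many thresholds $M_g$ to be combined into a single $M_n$.
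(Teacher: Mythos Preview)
Your proposal is correct and follows essentially the same recursive extraction as the paper: set $k_0=0$, and at stage $n$ use the \Folner property together with finiteness of $G_n$ to choose $k_n>k_{n-1}$ so that $\lvert F_s g\setminus F_s\rvert/\lvert F_s\rvert<1/n$ for all $s\geq k_n$ and all $g\in G_n$. Your version is simply more explicit about the threshold $M_n=\max_{g\in G_n}M_g$ and about forcing strict monotonicity via $\max\{k_{n-1}+1,M_n\}$; the preliminary remark that subsequences of \Folner sequences remain \Folner is true but not needed here, since the CIF inequality is all that has to be verified.
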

\begin{proof} 
By recursion we build a suitable sequence $({k_n})_{n\in\N}$ of natural numbers. Set $k_0=0$. Assume that $n>0$ and suppose that $k_{n-1}$ is defined. Then there exists $k_n\in\mathbb{N}$ such that $k_n> k_{n-1}$ and, for all $s\geq k_n$ and for all $g\in G_n$,  \(\left|F_sg\setminus F_s\right|/\left| F_s\right|< 1/n\).
Clearly, $(F_{k_n})_{n\in\N}$ is a CIF sequence with respect to $(G_n)_{n\in\N}$.
\end{proof}

\begin{remark}\label{remsub}
Let $G$ be a countable amenable group and let $(G_n)_{n\in\N}$ be an increasing exhaustive sequence in $\Pf(G)$.
\begin{enumerate}[(a)]
\item By definition, a CIF sequence $(F_n)_{n\in\N}$ of $G$ with respect to $(G_n)_{n\in\N}$ is also a right F\o lner sequence of $G$.
\item Let $G$ be a countable amenable group and $(G_n)_{n\in\N}$ be an increasing exhaustive sequence in $\Pf(G)$.  Then any subsequence of a CIF sequence with respect to $(G_n)_{n\in\N}$ is still a CIF sequence with respect to $(G_n)_{n\in\N}$. 
\item One can easily deduce from (a) and (b) that if $(F_n)_{n\in\N}$  is a CIF sequence of $G$ with respect to $(G_n)_{n\in\N}$, then for every increasing exhaustive sequence $(H_n)_{n\in\N}$ of $G$,  an appropriate subsequence $(F_{k_n})_{n\in\N}$  of $(F_n)_{n\in\N}$ is a CIF sequence of $G$ with respect to $(H_n)_{n\in\N}$, hence simultaneously a CIF sequence of $G$ with respect to 	$(G_n)_{n\in\N}$ as well. So, in this sense, the notion of ``CIF sequence with respect to $(G_n)_{n\in\N}$" essentially does not depend on $(G_n)_{n\in\N}$ and one can loosely speak of CIF sequence of $G$. 
\end{enumerate}
\end{remark}


The next lemma is straightforward to prove.

\begin{lemma}\label{tidisun}
Let $G$ and $K$ be countable amenable groups. Let $\pi\colon G\rightarrow K$ be a surjective homomorphism and $H=\ker \pi$. Let $\sigma\colon K\rightarrow G$ a section for $\pi$ with $\sigma(1_K)=1_G$. Consider a right F\o lner sequence $(E_n)_{n\in\N}$ of $H$ and a right F\o lner sequence $(F_n)_{n\in\N}$ of $K$ with $1_K\in F_n$ for all $n\in\N$. Then, for all $n\in\N$:
\begin{enumerate}[(a)]
\item $H \cap (\sigma(F_n)\sigma(F_n)^{-1}) = \{1\}$;
\item for all $i, j\in\N$, $E_j\sigma(F_i)=\bigsqcup_{f\in \sigma(F_i)}E_jf;$
\item for all $i, j\in\N$, $\lvert E_j\sigma(F_i)\rvert=\lvert E_{j}\rvert\lvert\sigma(F_i)\rvert$.
\end{enumerate} 
\end{lemma}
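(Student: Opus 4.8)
The plan is to prove Lemma~\ref{tidisun} by unwinding the definitions, relying on the facts that $\pi$ is a homomorphism with kernel $H$, that $\sigma$ is a set-theoretic section, and that $H$ is a normal subgroup of $G$. The three statements are really about how the section $\sigma$ interacts with cosets of $H$, and part (a) is the engine that drives (b) and (c).

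First I would prove (a). The claim $H \cap (\sigma(F_n)\sigma(F_n)^{-1}) = \{1\}$ says that distinct elements of $\sigma(F_n)$ lie in distinct left cosets of $H$. To see this, suppose $\sigma(f)\sigma(f')^{-1} \in H$ for some $f,f' \in F_n$. Applying $\pi$ and using that $H = \ker\pi$, we get $\pi(\sigma(f))\pi(\sigma(f'))^{-1} = 1_K$, i.e. $ff'^{-1} = 1_K$ since $\pi\circ\sigma = \mathrm{id}_K$. Hence $f = f'$, so $\sigma(f) = \sigma(f')$ and the product is $1_G$. (One checks $1 \in H \cap (\sigma(F_n)\sigma(F_n)^{-1})$ trivially since $1_K \in F_n$ and $\sigma(1_K)=1_G$.)

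Next I would deduce (b), that $E_j\sigma(F_i)=\bigsqcup_{f\in \sigma(F_i)}E_jf$. The set equality $E_j\sigma(F_i) = \bigcup_{f \in \sigma(F_i)} E_j f$ is immediate from the definition of a product of subsets; the content is that the union is \emph{disjoint}. Suppose $e f = e' f'$ with $e,e' \in E_j$ and $f,f' \in \sigma(F_i)$. Then $f f'^{-1} = e^{-1} e' \in H$ (since $E_j \subseteq H$), and $f f'^{-1} \in \sigma(F_i)\sigma(F_i)^{-1}$, so by part (a) we get $f f'^{-1} = 1$, whence $f = f'$ and then $e = e'$ by cancellation. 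This gives disjointness. Part (c) then follows at once: each summand $E_j f$ has cardinality $\card{E_j}$ because right translation by $f$ is injective, and by (b) there are exactly $\card{\sigma(F_i)}$ pairwise disjoint summands, so $\card{E_j\sigma(F_i)}=\card{E_{j}}\card{\sigma(F_i)}$.

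Since the statement is flagged as ``straightforward to prove,'' there is no serious obstacle; the only point requiring a moment's care is part (a), which is the crux. Everything else is a formal consequence of the normal-subgroup/coset structure together with cancellativity of the group. One subtlety worth noting is that $\sigma$ need not be injective a priori, but part (a) in fact shows $\sigma$ is injective on $F_n$ (distinct elements map to distinct cosets, hence to distinct points), so $\card{\sigma(F_i)} = \card{F_i}$; I would record this only if it is used downstream, otherwise the statements as written hold verbatim.
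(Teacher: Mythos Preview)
Your proof is correct and is exactly the straightforward verification the paper has in mind; indeed the paper omits the proof entirely, stating only that the lemma ``is straightforward to prove.'' One tiny quibble: your closing remark that ``$\sigma$ need not be injective a priori'' is off, since any section of a surjection is automatically injective (from $\pi\circ\sigma=\mathrm{id}_K$), but this aside plays no role in your argument.
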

	%
%

The next theorem is implicitly contained in \cite[Theorem 2.27]{DFG-amac}, 
for the sake of completeness we provide a complete and independent proof. 

\begin{theorem}\label{teofolner}
Let $G$ and $K$ be countable amenable groups, $\pi\colon G\rightarrow K$ be a surjective homomorphism with $H=\ker \pi$ and $\sigma\colon K\rightarrow G$ be a section for $\pi$ with $\sigma(1_K)=1_G$. 
If  $(E_n)_{n\in\N}$ is a right F\o lner sequence of $H$ and $(F_n)_{n\in\N}$ is a right F\o lner sequence of $K$ with $1_K\in F_n$ for all $n\in\N$, then there exist  two increasing sequences of natural numbers $(m_n)_{n\in\N}$ and $(k_n)_{n\in\N}$ such that:
\begin{enumerate}[(1)]
\item the sequence given by
\begin{equation}\label{cifEq1}
\bar F_n=E_{m_n}\sigma(F_{k_n}) \text{ for all }n\in\N,
\end{equation}
is a right F\o lner sequence of $G$;
\item for all $n\in\N$,	$\lvert \bar F_n\rvert=\lvert E_{m_n}\rvert\lvert F_{k_n}\rvert$;
\item for every sequence $(a_n)_{n\in\N}$ in $\N$ there exists a sequence $(h_n)_{n\in\N}$ in $\N$ such that, for all $n\in\N$, $m_{h_{n+1}}>m_{h_n}+a_n$ and the sequence given by
\begin{equation}\label{cifEq4}
F^*_n=E_{m_{h_n}}\sigma(F_{k_n}) \text{ for all }n\in\N,
\end{equation}
is a right F\o lner sequence of $G$.
\end{enumerate}
\end{theorem}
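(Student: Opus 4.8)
The plan is to take $\bar F_n = E_{m_n}\sigma(F_{k_n})$ as the candidate \Folner sequence and to choose the index sequences $(k_n)$ and $(m_n)$ by a two-speed recursion in which the quotient side is fixed before the kernel side. Part (2) is immediate: by Lemma~\ref{tidisun}(c) we have $\card{\bar F_n} = \card{E_{m_n}}\card{\sigma(F_{k_n})}$, and $\card{\sigma(F_{k_n})} = \card{F_{k_n}}$ since $\sigma$ is injective. The whole difficulty lies in the \Folner condition of part (1). Since $G$ is countable, I would enumerate $G = \{g_j : j \in \N\}$; it then suffices to arrange that for each fixed $j$ the defect $\card{\bar F_n g_j \setminus \bar F_n}/\card{\bar F_n}$ tends to $0$.

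The core computation is the following. Fix $g \in G$ and write $\bar g = \pi(g)$. For $f \in F_{k_n}$ set $c_f = \sigma(f)\,g\,\sigma(f\bar g)^{-1}$; since $\pi(c_f)=1$ we have $c_f \in H$, and $e\sigma(f)g = (e c_f)\sigma(f\bar g)$ for every $e \in E_{m_n}$. By Lemma~\ref{tidisun} the map $(e,f)\mapsto e\sigma(f)$ is injective with pairwise disjoint blocks $E_{m_n}\sigma(f)$, so applying $\pi$ to a putative equality $e c_f \sigma(f\bar g) = e'\sigma(f')$ forces $f' = f\bar g$ and then $e' = e c_f$. Hence $e\sigma(f)g \in \bar F_n$ if and only if $f\bar g \in F_{k_n}$ and $e c_f \in E_{m_n}$. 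Counting the complementary pairs and dividing by $\card{\bar F_n}=\card{E_{m_n}}\card{F_{k_n}}$ yields
\[
\frac{\card{\bar F_n g \setminus \bar F_n}}{\card{\bar F_n}} \leq \frac{\card{F_{k_n}\bar g \setminus F_{k_n}}}{\card{F_{k_n}}} + \frac{1}{\card{F_{k_n}}}\sum_{f\in F_{k_n},\, f\bar g\in F_{k_n}} \frac{\card{E_{m_n}c_f\setminus E_{m_n}}}{\card{E_{m_n}}}.
\]
The first summand is a \Folner defect for $F$ against $\bar g = \pi(g)$, and the second is an average of \Folner defects for $E$ against the cocycle values $c_f$.

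Now I would run the recursion, and this ordering of choices is exactly the main obstacle: the cocycles $c_f$ depend on $f$ and sweep out a finite but $n$-dependent set $C_n = \{\,\sigma(f)g_j\sigma(f\pi(g_j))^{-1} : j\leq n,\ f\in F_{k_n}\,\}\subseteq H$ with no uniform bound, so the kernel index cannot be fixed in advance. The remedy is to freeze the quotient side first. At stage $n$ I would choose $k_n > k_{n-1}$ large enough, using the \Folner property of $(F_m)$ on the finite set $\{\pi(g_j):j\leq n\}$, that the first summand above is $< 1/(2n)$ for every $g_j$ with $j\leq n$; this also pins down the finite set $C_n$. Then, with $C_n$ now determined, I would choose $m_n > m_{n-1}$ large enough, using the \Folner property of $(E_m)$ on $C_n$, that $\card{E_{m_n}c\setminus E_{m_n}}/\card{E_{m_n}} < 1/(2n)$ for all $c\in C_n$. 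The displayed bound then gives a defect $\leq 1/n$ for every $g_j$ with $j\leq n$, so $(\bar F_n)$ is a right \Folner sequence of $G$, proving (1).

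For part (3), I would first record for each $n$ a threshold index $M_n$ such that $\card{E_m c\setminus E_m}/\card{E_m} < 1/(2n)$ for all $m\geq M_n$ and all $c\in C_n$; such $M_n$ exists because $C_n$ is finite and $(E_m)$ is \Folner. Given $(a_n)$, I would build $(h_n)$ recursively by choosing $h_{n+1}$ so large that simultaneously $m_{h_{n+1}} > m_{h_n}+a_n$ and $m_{h_{n+1}}\geq M_{n+1}$; this is possible since $(m_n)$ is strictly increasing, hence unbounded. Because $F^*_n = E_{m_{h_n}}\sigma(F_{k_n})$ keeps the same quotient index $k_n$, the first summand is unchanged ($<1/(2n)$), while $m_{h_n}\geq M_n$ controls the second summand by $1/(2n)$; the same estimate shows $(F^*_n)$ is \Folner, and $m_{h_{n+1}}>m_{h_n}+a_n$ holds by construction (forcing $(h_n)$ to be strictly increasing). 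This completes the plan.
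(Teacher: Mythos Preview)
Your proposal is correct and follows essentially the same route as the paper. The paper packages the two-speed recursion via its ``CIF sequence'' machinery (Proposition~\ref{CIF} and Remark~\ref{remsub}): it first passes to a subsequence $(F_{k_n})$ that is CIF with respect to $(\pi(G_n))$, then defines the finite cocycle sets $H_n=\sigma(F_{k_n})G_n\sigma(F_{k_n})^{-1}\cap H$ and passes to a subsequence $(E_{m_n})$ that is CIF with respect to $(H_n)$; your direct recursion with the sets $C_n$ does exactly the same thing without the extra vocabulary. The core estimate is identical: the paper splits $\bar F_n g\setminus\bar F_n$ according to whether the element lies in $\pi^{-1}(F_{k_n})$, which is precisely your split according to whether $f\bar g\in F_{k_n}$, and the paper's $h_f=\sigma(f)g\sigma(f')^{-1}$ is your $c_f$. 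For part~(3) the paper simply observes that any subsequence of a CIF sequence remains CIF, so one may thin $(m_n)$ to enforce the gap condition; your threshold argument with $M_n$ is the same idea spelled out by hand, and is in fact a bit more explicit than the paper's one-line appeal to Proposition~\ref{CIF}.
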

\begin{proof}
The group $G$ is countable so we enumerate its elements as $G=\{g_n\colon n\in\N\}$ with $g_0=1$; let also $G_n=\{g_0,\dots,g_n\}$ for all $n\in\mathbb{N}$. By Proposition~\ref{CIF} there exists an increasing sequence of natural numbers $(k_n)_{n\in\N}$ (with $k_0=0$) such that $(F_{k_n})_{n\in\N}$ is a CIF sequence of $K$ with respect to $(\pi(G_n))_{n\in\N}$.

For every $n\in\N$ define
\begin{equation*} 
H_n=\{h\in H: h\sigma(F_{k_n})\cap \sigma(F_{k_n})G_n\neq\emptyset \}=\sigma(F_{k_n})G_n\sigma(F_{k_n})^{-1}\cap H.
\end{equation*}
Since $\sigma(F_{k_n})$ and $G_n$ are finite, also $\sigma(F_{k_n})G_n\sigma(F_{k_n})^{-1}$ is finite. This means that also $H_n$ is finite. 

Clearly, $\bigcup_{n\in\N}H_n\subseteq H$. 
On the other hand, for every $h\in H$, there is an $n_h\in\N$ such that $h\in G_{n_h}$. So $h\in H_{n_h}$, as $\sigma(1_k)=1_G$ and $1_K\in F_n$ for all $n\in\N$. Then $H=\bigcup_{n\in\N}H_n$ and $H_n\subseteq H_{n+1}$ for all $n\in\N$.

By Proposition~\ref{CIF} there exists an increasing sequence of natural numbers $(m_n)_{n\in\N}$ (with $m_0=0$) such that $(E_{m_n})_{n\in\N}$ is a CIF sequence of $H$ with respect to $(H_n)_{n\in\N}$. 
Let $(\bar F_n)_{n\in\N}$ be the sequence given by \eqref{cifEq1}. Fix arbitrarily $n\in \N$. Lemma~\ref{tidisun}(c) implies that $\lvert \bar F_n\rvert=\lvert E_{m_n}\rvert\lvert \sigma(F_{k_n})\rvert$. Since $\sigma$ is injective, we immediately obtain item (2). 
	
The rest of the proof is dedicated to verify that $(F_n)_{n\in\N}$ is a right F\o lner sequence. Fix $n\in\N$, pick an element $g\in G_n$ and let $C_n= \bar F_ng \setminus \bar F_n$, $A_n = \pi^{-1}(F_{k_n})$, so that 
$$C_n = (C_n \setminus A_n) \sqcup (C_n\cap A_n).	$$ 
Since we need to estimate $|C_n|$, it is enough to separately estimate  the cardinalities $|C_n \setminus A_n |$ and  $|C_n\cap A_n |$.
	
Pick an $x\in C_n$, then $x=e\sigma(f)g$ with $f\in F_{k_n}$ and $e\in E_{m_n}$. 
If $x\in C_n \setminus A_n$, then $\pi(x)=f\pi(g)\in F_{k_n}\pi(g)\setminus F_{k_n}$.
Since $(F_{k_n})_{n\in\N}$ is a CIF sequence with respect to $(\pi(G_n))_{n\in\N}$, we have $|F_{k_n}\pi(g)\setminus F_{k_n}|\leq {|F_{k_n}|}/{n}$, and so there are at most ${|F_{k_n}|}/{n}$ choices for $f$. Since $e$ ranges in $E_{m_n}$ arbitrarily, this leads to
\begin{equation}\label{eqsequ2}
|C_n \setminus A_n | \leq \frac{|E_{m_n}||F_{k_n}|}{n} \leq \frac{|\bar F_n|}{n},
\end{equation}
Now suppose that $x\in A_n$, i.e., $f':= \pi(x)=f \pi(g)\in F_{k_n}$. As 
$$\pi(\sigma(f'))= f' = f\pi(g) = \pi(\sigma(f)g),$$
we get $\pi(\sigma(f)g\sigma(f')^{-1})=1$. Therefore,  $h_f:= \sigma(f)g\sigma(f')^{-1}\in H$. Actually, $h_f \in H_n$, as $g \in G_n$. From $\sigma(f)g =h_f\sigma(f')$ we deduce that $x=e\sigma(f)g= eh_f\sigma(f')$. As $x=(eh_f)\sigma( f')\notin\bar F_n = E_{m_n}\sigma(F_{k_n}) $, while $\sigma(f') \in \sigma(F_{k_n})$, we conclude that $eh_f\notin E_{m_n}$. Hence, $eh_f\in E_{m_n}h_f \setminus E_{m_n}$, so $x\in (eh_f)\sigma(f') \in (E_{m_n}h_f \setminus E_{m_n})\sigma(f')$. In view of $f' = f \pi(g)$ this proves that
$$C_n \cap A_n\subseteq \bigcup_{f\in F_{k_n}} ( E_{m_n}h_f \setminus E_{m_n})\sigma(f \pi(g)).$$
Since $(E_{m_n})_{n\in\N}$ is a CIF sequence with respect to $(H_n)_{n\in\N}$ and $h_f\in H_n$, we get $ | E_{m_n}h_f \setminus E_{m_n}| \leq {|E_{m_n}|}/{n}$. This gives 
\begin{equation}\label{eqsequ3}
|C_n\cap A_n | \leq	\frac{\lvert E_{m_n}\rvert\lvert F_{k_n}\rvert}{n}= \frac{\lvert\bar F_n\rvert}{n}.
\end{equation}
Combining \eqref{eqsequ2} and \eqref{eqsequ3} we obtain
\begin{equation*}
|C_n|  =	|C_n \cap A_n| + |C_n \setminus A_n |\leq \frac{2|\bar F_n|}{n}+\frac{2|\bar F_n|}{n}\leq \frac{2|\bar F_n|}{n},
\end{equation*}	
therefore
\begin{equation}\label{cifEq2}
\frac{|\bar F_ng\setminus \bar F_n|}{|\bar F_n|}\leq \frac{2}{n}.
\end{equation}
Since $n$ was chosen arbitrarily and \eqref{cifEq2} holds for all $g\in G_n$, we have proved that $(\bar F_n)_{n\in\mathbb{N}}$ is a right F\o lner sequence of $G$.
	
By Proposition~\ref{CIF} for any sequence of natural numbers $(a_n)_{n\in\N}$ there is another sequence of natural numbers $(h_n)_{n\in\N}$ such that such that $(E_{m_{h_n}})_{n\in\N}$ is still a CIF sequence of $H$ with respect to $(H_n)_{n\in\N}$. Hence, (3) follows from (2).
\end{proof}

\subsection{\texorpdfstring{$X$}{X}-monotileable sequences}

If $G$ is a countable amenable group and $(F_n)_{n\in\N}$ a right F\o lner sequence of $G$ invariant under conjugation by any element
$g\in G$ (i.e., $gF_n = F_ng$ for all $n\in\N$ and $g\in G$), then $G$ is $\text{Inn}(G)$-monotileable.

\begin{example}\label{iii}
Here are two examples of countable amenable groups $G$  having a right F\o lner sequence $(F_n)_{n\in\N}$ that is $\Aut(G)$-monotileable.
\begin{enumerate}[(a)]
\item Take an infinite collection $\{S_n\}_{n\in\N}$ of simple finite groups such that for $n\ne m$ the only homomorphism $S_n \to S_m$ is the trivial one and let $H=\bigoplus _{n\in\N} S_n$. By our choice of the family $\{S_n\}_{n\in\N}$ one has 	$\Aut(H) = \prod_{n\in\N}\Aut(S_n)$. Consider the sequence $(\bigoplus_{i=0}^nS_i)_{n\in\N}$; it is easy to see that it is an $\Aut(H)$-monotileable right F\o lner sequence of $H$.
\item Consider the group $K=\bigoplus_{i=1}^k\Z(p^\infty_{i})$,  where $p_1, \ldots, p_k$ are pairwise distinct primes. Then for every $n\in\N$ the subgroup $K[n]:=\{k\in K: nk=0\}$ of $K$ is finite and fully invariant. Therefore the sequence $(K[n!])_{n\in\N}$ is $\Aut(K)$-monotileable.
\end{enumerate}	
\end{example}

The following lemma is the counterpart for $X$-monotileable groups of Lemma~\ref{tilesub} and has a similar proof. 

\begin{lemma}\label{stilesub}
Let $G$ be a group and $(F_n)_{n\in\N}$ an $X$-monotileable sequence with $\mathrm{id}\in S$. Then every subsequence $(F_{k_n})_{n\in\N}$ is still an $X$-monotileable sequence.
\end{lemma}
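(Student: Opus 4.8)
The plan is to mirror the (omitted) proof of Lemma~\ref{tilesub}, the only new ingredient being that the bottom of the chain carries an arbitrary automorphism $\phi\in X$. Fix $n\in\N$ and $\phi\in X$; I must show that $\phi(F_{k_n})$ is a monotile of $F_{k_{n+1}}$, which is exactly the condition for $(F_{k_n})_{n\in\N}$ to be $X$-monotileable.

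First I would assemble a finite chain of monotile relations linking $\phi(F_{k_n})$ to $F_{k_{n+1}}$. Applying the $X$-monotileability of $(F_n)_{n\in\N}$ at index $k_n$ with the automorphism $\phi$ gives that $\phi(F_{k_n})$ is a monotile of $F_{k_n+1}$. Then, using $\mathrm{id}\in X$, the same property at each index $j$ with $k_n+1\le j<k_{n+1}$ yields that $F_j$ is a monotile of $F_{j+1}$. Writing ``mon.''\ for ``is a monotile of'', I thus obtain
\[
\phi(F_{k_n})\ \text{mon.}\ F_{k_n+1}\ \text{mon.}\ \cdots\ \text{mon.}\ F_{k_{n+1}}
\]
(when $k_{n+1}=k_n+1$ only the first link is present). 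Note that the $\phi=\mathrm{id}$ instance of the whole argument is just Lemma~\ref{tilesub} applied to the original sequence.

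The core step is to collapse this chain into a single monotile relation, i.e.\ to invoke the transitivity of ``being a monotile of'' encapsulated in Lemma~\ref{disun}: if a set is a monotile of a second set which in turn is a monotile of a third, then the first is a monotile of the third. Iterating Lemma~\ref{disun} along the chain (formally by induction on the gap $k_{n+1}-k_n$) gives that $\phi(F_{k_n})$ is a monotile of $F_{k_{n+1}}$. Since $n\in\N$ and $\phi\in X$ were arbitrary, this shows $(F_{k_n})_{n\in\N}$ is $X$-monotileable.

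The one point requiring care is the transitivity step at the bottom link, where the partitioned pieces are translates of the automorphic image $\phi(F_{k_n})$ rather than of some $F_m$. In a group this is harmless: left translations are bijections, so composing partitions $F_{k_n+1}=\bigsqcup_{u}u\,\phi(F_{k_n})$ and $F_{k_n+2}=\bigsqcup_{v}v\,F_{k_n+1}$ produces the partition $F_{k_n+2}=\bigsqcup_{v,u}(vu)\,\phi(F_{k_n})$, whose pieces are automatically pairwise disjoint. This is exactly the mechanism behind Lemma~\ref{disun}, so the bottom link behaves like every other link and the proof runs verbatim as for Lemma~\ref{tilesub}.
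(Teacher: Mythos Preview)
Your argument is correct and is exactly the approach the paper has in mind: it omits the proof, saying only that it ``has a similar proof'' to Lemma~\ref{tilesub}, which in turn is meant to follow from the transitivity statement Lemma~\ref{disun}. Your chain $\phi(F_{k_n})\ \text{mon.}\ F_{k_n+1}\ \text{mon.}\ \cdots\ \text{mon.}\ F_{k_{n+1}}$, collapsed via Lemma~\ref{disun}, is precisely that argument, and your final paragraph correctly notes that the inclusion hypothesis in Lemma~\ref{disun} is inessential for the partition-composition mechanism.
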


 
The following technical result is used in the proof of Theorem~\ref{expro1}. 
 
\begin{lemma}\label{exle1}
Let $H$ be a countable group and $K$ a finitely generated group with a symmetric generating set $X=\{f_1,\dots,f_m\}$. Consider a group homomorphism $\phi\colon K\rightarrow \Aut(H)$ and let $\tilde X=\{\mathrm{id},\phi(f_1),\dots,\phi(f_m)\}$. If $(E_n)_{n\in\N}$ is an $\tilde X$-monotileable sequence of $H$, then for all $f\in K$ the set $\phi(f)(E_n)$ is a monotile of $E_{n+s}$, where $s=\ell_S(f)$.
\end{lemma}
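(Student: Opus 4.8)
The plan is to argue by induction on the word length $s=\ell_S(f)$ of $f$ with respect to the symmetric generating set $X$. Two elementary facts about the monotile relation in a group drive the whole argument. First, the relation is \emph{invariant under automorphisms}: if $V=\bigsqcup_{c\in C}cT$ and $\psi\in\Aut(H)$, then
\[
\psi(V)=\bigsqcup_{c\in C}\psi(c)\,\psi(T),
\]
where the union is still disjoint because $\psi$ is a bijection and $\psi(cT)=\psi(c)\psi(T)$ because $\psi$ is a homomorphism; hence $\psi(T)$ is a monotile of $\psi(V)$. Second, the relation is \emph{transitive}, which is precisely the content of Lemma~\ref{disun} (as recorded in the remark following it). For the base case $s=0$ we have $f=1_K$, so $\phi(f)=\mathrm{id}$ and $\phi(f)(E_n)=E_n$, which is trivially a monotile of $E_{n+0}=E_n$ via the index set $\{1\}$.

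For the inductive step, assume the statement holds for every element of length $s$ and for every $n\in\N$, and let $\ell_S(f)=s+1$. Since $X$ is a symmetric generating set, a geodesic word for $f$ starts with a generator, so we may write $f=f_i\,f'$ with $f_i\in X$ and $\ell_S(f')=s$ (removing the first letter leaves a word of length exactly $s$, as $\ell_S(f)\le 1+\ell_S(f')$). Because $\phi$ is a homomorphism into $\Aut(H)$,
\[
\phi(f)(E_n)=\phi(f_i)\bigl(\phi(f')(E_n)\bigr).
\]
By the inductive hypothesis $\phi(f')(E_n)$ is a monotile of $E_{n+s}$; applying the automorphism $\phi(f_i)$ and using the first fact, $\phi(f)(E_n)$ is a monotile of $\phi(f_i)(E_{n+s})$. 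Since $f_i\in X$ we have $\phi(f_i)\in\tilde X$, so the $\tilde X$-monotileability of $(E_n)_{n\in\N}$ (applied at level $n+s$) gives that $\phi(f_i)(E_{n+s})$ is a monotile of $E_{n+s+1}$. Composing the two relations by transitivity (Lemma~\ref{disun}) yields that $\phi(f)(E_n)$ is a monotile of $E_{n+s+1}=E_{n+(s+1)}$, completing the induction.

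The only point that needs care is the interplay between the monotile relation and the automorphisms $\phi(f_i)$; once it is recorded that an automorphism carries a monotile partition to a monotile partition of the image, the proof is just a clean composition of the partition supplied by the inductive hypothesis with the one supplied by $\tilde X$-monotileability. I would also note, when invoking Lemma~\ref{disun}, that its genuinely needed content is transitivity of the monotile relation rather than the ambient inclusions $X\subseteq Y\subseteq Z$: in a group the composite family $\{vu\,T : v\in V,\ u\in U\}$ is automatically a partition whenever $\{uT\}_{u\in U}$ partitions $Y$ and $\{vY\}_{v\in V}$ partitions $Z$, which is exactly the situation here (the index sets being $\phi(f_i)(C)$ and the one from $\tilde X$-monotileability).
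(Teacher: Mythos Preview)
Your proof is correct and follows essentially the same approach as the paper: induction on the word length $\ell_S(f)$, peeling off a leading generator $f_i$ so that $f=f_i f'$, then combining the inductive hypothesis with the $\tilde X$-monotileability at the next level via transitivity of the monotile relation (Lemma~\ref{disun}). The only cosmetic difference is that the paper carries out the explicit computation of applying $\phi(f_i)$ to the disjoint union and substituting, whereas you abstract this into the two lemmas ``automorphisms preserve monotile partitions'' and ``monotile is transitive''; the underlying argument is identical.
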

\begin{proof}
Fix $n\in\N$. We proceed by induction on the length $\ell_S$. Let $f\in K$. If $f$ has length $0$ or $1$ the statement follows by hypothesis. Suppose we have already proved the thesis for all the elements of $K$ of length $s-1$ and let $f$ such that $\ell_X(f)=s$. This means that exist an index $i\in\{1,\dots,m\}$ and an element $\bar f$ such that $f=f_i\bar f$ and $\ell_X(\bar f)=s-1$. So there exists a subset $\tilde E$ of $H$ such that
\begin{equation}\label{exeq10}
E_{n+s-1}=\bigsqcup_{\tilde e\in\tilde E} \tilde e\phi(\bar f)(E_n).
\end{equation}
By hypothesis, $\phi(f_i)(E_{n+s-1})$ is a monotile of $E_{n+s}$, therefore there exists a subset $E'$ of $H$ such that
\begin{equation}\label{exeq11}
E_{n+s}=\bigsqcup_{e'\in E'}e'\phi(f_i)(E_{n+s-1}).
\end{equation}
Combining \eqref{exeq10} and \eqref{exeq11}, we obtain
\begin{equation*}
\begin{split}
E_{n+s}&=\bigsqcup_{e'\in E'}e'\phi(f_i)\left(\bigsqcup_{\tilde e\in\tilde E} \tilde e\phi(\bar f)(E_n)\right)\\&=\bigsqcup_{e'\in E'}e'\left(\bigsqcup_{\tilde e\in\tilde E} \phi(f_i)(\tilde e)\phi(f_i)(\phi(\bar f)(E_n))\right) \\ &=\bigsqcup_{e'\in E'}e'\left(\bigsqcup_{\phi(f_i)(\tilde e)\in\phi(f_i)(\tilde E)} \phi(f_i)(\tilde e)\phi(f)(E_n)\right).
\end{split}
\end{equation*}
Let $E''= E'\phi(f_i)(\tilde E)$. By Lemma~\ref{disun}, $E_{n+s}=\bigsqcup_{e''\in E''}e''\phi(f)(E_n)$.
\end{proof}

\subsection{Proof of the Extension Theorem}

Consider two countable groups $G$ and $K$ and a surjective homomorphism $\pi\colon G\rightarrow K$. Fix a section $\sigma\colon K\rightarrow G$ for $\pi$, such that $\sigma(1_K)=1_G$. Let $(F_n)_{n\in\N}$ be a \tileable\ sequence of $K$ and let $(K_n)_{n\in\N}$ be a tiling sequence associated to $(F_n)_{n\in\N}$. We recall that any element  $f\in F_n$ can be written in a unique way as $f=\prod_{j=1}^nk_j$, where $k_j\in K_{n+1-j}$ for $1\leq j\leq n$. For all $n\in\N$ we define 
\begin{equation*}
\sigma_n\colon F_n\rightarrow G\ \text{ as }\ \sigma_n(f)=\prod_{j=1}^n\sigma(k_j)\ \text{ where }f=\prod_{j=1}^nk_j.
\end{equation*}
Since $F_n\subseteq F_{n+1}$ for all $n\in\N$, we obtain that $\sigma_{n+1}\rest_{F_n}=\sigma_n$ for all $n\in\N$. Then the map $\bar \sigma\colon \bigcup_{n\in\N} F_n\rightarrow G$, given by  $\bar \sigma(f)=\sigma_n(f)$ for $n\in\N$ such that $f\in F_n$, is well defined. 

It is straightforward to verify that $\bar\sigma(1_K)=\sigma_0(1_K)=\sigma(1_K)=1_G$ and that, for every $f\in \bigcup_{n\in\N}F_n$, $\pi(\bar\sigma(f))=f$, that is, 
$\pi\circ\bar\sigma=\mathrm{id}_K\rest_{\bigcup_{n\in\N}F_n}$; in particular $\bar \sigma$ is injective.
Hence, we can extend the map $\bar \sigma$ to a section $\tilde\sigma\colon K\rightarrow G$ for $\pi$ and we call it the \emph{section associated} to $(F_n)_{n\in\N}$ and $\sigma$. Clearly, this $\tilde\sigma$ need not be unique. In case $\tilde\sigma = \sigma$, we simply say that $\sigma$
is associated to $(F_n)_{n\in\N}$. 

\begin{lemma}\label{tile1}
Let  $G$ and $K$ be countable groups, $\pi\colon G\rightarrow K$ a surjective homomorphism and $\sigma\colon K\rightarrow G$ a section  for $\pi$ with  $\sigma(1_K)=1_G$. For a \tileable\ sequence $(F_n)_{n\in\N}$  of $K$ with associated tiling sequence 
$(K_n)_{n\in\N}$ consider also the section $\tilde \sigma$ associated to $(F_n)_{n\in\N}$ and $\sigma$.
Then $(\tilde\sigma(F_n))_{n\in\N}$ is a \tileable\ sequence of $G$ with associate tiling sequence $(\sigma(K_n))_{n\in\N}$.
\end{lemma}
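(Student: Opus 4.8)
The plan is to verify directly the two conditions of Definition~\ref{til-def} for the sequence $(\tilde\sigma(F_n))_{n\in\N}$, exhibiting $(\sigma(K_n))_{n\in\N}$ as an associated tiling sequence. The base condition is immediate: since $F_0=\{1_K\}$ and $\tilde\sigma(1_K)=1_G$, we have $\tilde\sigma(F_0)=\{1_G\}$. The whole content of the lemma therefore reduces to showing, for each $n\in\N$, the disjoint decomposition
\[\tilde\sigma(F_{n+1})=\bigsqcup_{k\in K_{n+1}}\sigma(k)\,\tilde\sigma(F_n),\]
which says precisely that $\tilde\sigma(F_n)$ is a monotile of $\tilde\sigma(F_{n+1})$ tiled by $\sigma(K_{n+1})$.

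The key step, and the heart of the argument, is the multiplicativity identity
\[\tilde\sigma(kf)=\sigma(k)\,\tilde\sigma(f)\qquad\text{for all }k\in K_{n+1},\ f\in F_n.\]
To prove it I would track unique factorizations. Recall $F_{n+1}=\bigsqcup_{k\in K_{n+1}}kF_n$, so each $f'\in F_{n+1}$ is uniquely $f'=kf$ with $k\in K_{n+1}$ and $f\in F_n$; writing the unique factorization $f=\prod_{j=1}^n k_j$ with $k_j\in K_{n+1-j}$, the element $f'=k\,k_1\cdots k_n$ is then the unique factorization of $f'$ in $K_{n+1}K_n\cdots K_1$. By the very definition of $\tilde\sigma=\sigma_{n+1}$ on $F_{n+1}$ and of $\tilde\sigma=\sigma_n$ on $F_n$,
\[\tilde\sigma(f')=\sigma(k)\prod_{j=1}^n\sigma(k_j)=\sigma(k)\,\tilde\sigma(f),\]
which is the claimed identity.

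Granting the identity, the decomposition follows at once. Applying $\tilde\sigma$ to $F_{n+1}=\bigcup_{k\in K_{n+1}}kF_n$ gives $\tilde\sigma(F_{n+1})=\bigcup_{k\in K_{n+1}}\sigma(k)\,\tilde\sigma(F_n)$. For disjointness I would invoke that $\tilde\sigma$ is injective, as already recorded before the statement: an injective map sends disjoint sets to disjoint images, so from $kF_n\cap k'F_n=\emptyset$ for $k\ne k'$ in $K_{n+1}$ we obtain $\sigma(k)\,\tilde\sigma(F_n)\cap\sigma(k')\,\tilde\sigma(F_n)=\tilde\sigma(kF_n)\cap\tilde\sigma(k'F_n)=\emptyset$. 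Since $\sigma$ is a section, it is injective, so $\sigma(K_{n+1})$ indexes the tiles faithfully; this yields the required partition and completes the verification that $(\tilde\sigma(F_n))_{n\in\N}$ is a \tileable\ sequence with tiling sequence $(\sigma(K_n))_{n\in\N}$.

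The only delicate point is the bookkeeping behind the multiplicativity identity: one must check that the factorization of $kf$ in $F_{n+1}$ is genuinely $k$ prepended to the factorization of $f$ in $F_n$, i.e.\ that the iterated tiling factorizations are compatible across consecutive levels. This is exactly the compatibility $\sigma_{n+1}\rest_{F_n}=\sigma_n$ recorded in the construction of $\tilde\sigma$, so no real obstacle arises and everything else is formal.
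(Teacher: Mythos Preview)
Your proof is correct and follows essentially the same route as the paper's. You make the multiplicativity identity $\tilde\sigma(kf)=\sigma(k)\,\tilde\sigma(f)$ explicit (the paper treats the covering $\tilde\sigma(F_{n+1})=\sigma(K_{n+1})\tilde\sigma(F_n)$ as immediate from the definition of $\sigma_{n+1}$), and you deduce disjointness from the already-recorded injectivity of $\tilde\sigma$, whereas the paper checks the equivalent condition $\sigma(K_{n+1})^{-1}\sigma(K_{n+1})\cap\tilde\sigma(F_n)\tilde\sigma(F_n)^{-1}=\{1\}$ by applying $\pi$ directly; these are minor stylistic variants of the same argument.
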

\begin{proof}
To verify that $(\tilde\sigma(F_n))_{n\in \N}$ is a \tileable\ sequence we have to prove that 
\begin{equation}\label{abcd}
\sigma(K_{n+1})^{-1}\sigma(K_{n+1}) \cap \tilde\sigma(F_n)\tilde\sigma(F_n)^{-1} = \{1\},
\end{equation}
for every $n\in\N$.
Fix $n\in\N$ and $g\in\sigma(K_{n+1})^{-1}\sigma(K_{n+1}) \cap \tilde\sigma(F_n)\tilde\sigma(F_n)^{-1}$. So there are $k_1,k_2\in K_{n+1}$ and $f_1,f_2\in \tilde\sigma(F_n)$ such that $g=f_1 f_2^{-1}$ and $\sigma(k_1)f_1=\sigma(k_2)f_2$. Thus, applying $\pi$,
\begin{equation*}
k_1\pi(f_1)=\pi(\sigma(k_1)f_1)=\pi(\sigma(k_2)f_2)=k_2\pi(f_2).
\end{equation*}
Since $(F_n)_{n\in\N}$ is a \tileable\ sequence of $K$, from $k_1\pi(f_1)=k_2\pi(f_2)$ we conclude that $k_1=k_2$ and $\pi(f_1)=\pi(f_2)$. This implies $\sigma(k_1)=\sigma(k_2)$ and $f_1=f_2$. Therefore, $g=1$ and then \eqref{abcd} holds.
\end{proof}

We are now in position to prove Theorem~\ref{monosequ1}, that is the Extension Theorem stated in the introduction. Actually, here we prove the more technical claim below, from which the theorem follows immediately.

\begin{claim}\label{monosequ}
Suppose that $0 \rightarrow H \xrightarrow{\iota} G \xrightarrow{\pi}K \rightarrow 0$ is a short exact sequence of countable groups, where $K,H$ are \tileable. Fix a section $\sigma\colon K\rightarrow G$ for $\pi$ such that $\sigma(1_K)=1_G$, a \tileable\ F\o lner sequence $(F_n)_{n\in\N}$ of $K$ and a \tileable \ F\o lner sequence $(E_n)_{n\in\N}$ of $H$. Let $(K_n)_{n\in\N}$ be a tiling sequence associated to $(F_n)_{n\in\N}$ and $\tilde\sigma$ a section associated to $(F_n)_{n\in\N}$ and $\sigma$. For convenience, for every $n\in\N$, we set $F_n'= \tilde\sigma(F_n)$ and $K_n'=\sigma(K_n)$.  If one of the following two conditions holds:
\begin{enumerate}[(a)]
\item  $(E_n)_{n\in\N}$ is $\mathrm{Inn}(G)$-monotileable,
\item  $K'_n=\sigma(K_n) \subseteq c_G(H)$ for all $n\in\N$,
\end{enumerate}
then there exist a \tileable\ sequence $(F^{\#}_n)_{n\in\N}$ of $G$ and a strictly increasing sequence $(m_n)_{n\in\N}$ in $\N$ such that:
\begin{enumerate}[(1)]
\item $(\pi(F^{\#}_n))_{n\in\N}$ is a \tileable\ right F\o lner sequence of $K$;
\item $(\bar F_n)_{n\in\N}=(E_{m_n}F^{\#}_n)_{n\in\N}$ is a \tileable\ right F\o lner sequence of $G$;
\item for all $n\in\N$, $\bar F_n=\bigsqcup_{f\in F^{\#}_n}E_{m_n}f$.
\end{enumerate}
\end{claim}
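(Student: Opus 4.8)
The plan is to realise the desired sequence as $\bar F_n=E_{m_n}F^{\#}_n$ with $F^{\#}_n=\tilde\sigma(F_{k_n})$ for a suitable subsequence, obtaining the \Folner property from Theorem~\ref{teofolner} and the tiling structure from Lemmas~\ref{tile1}, \ref{tilesub} and \ref{disun}. First I would apply Theorem~\ref{teofolner} to the section $\tilde\sigma$ (which is legitimate since $\tilde\sigma(1_K)=1_G$, assuming as we may that $1_K\in F_n$ for all $n$; in case (a) this is harmless by Proposition~\ref{lm=cm}) to produce strictly increasing sequences $(m_n)_{n\in\N}$ and $(k_n)_{n\in\N}$ with $k_0=m_0=0$ such that $\bar F_n=E_{m_n}\tilde\sigma(F_{k_n})$ is a right \Folner sequence of $G$. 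Setting $F^{\#}_n=\tilde\sigma(F_{k_n})$ gives $F^{\#}_0=\{1\}$, and conclusion (1) is immediate: $\pi(F^{\#}_n)=F_{k_n}$ is a subsequence of the \tileable\ \Folner sequence $(F_n)$ of $K$, hence \tileable\ by Lemma~\ref{tilesub} and \Folner\ as a subsequence of a \Folner\ sequence.

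Next I would establish (3) and the fact that $(F^{\#}_n)$ is a \tileable\ sequence of $G$. For (3), the right cosets $\{E_{m_n}f:f\in F^{\#}_n\}$ are pairwise disjoint because $E_{m_n}f_1\cap E_{m_n}f_2\neq\emptyset$ forces $f_1f_2^{-1}\in E_{m_n}^{-1}E_{m_n}\subseteq H$, while $\pi$ is injective on $F^{\#}_n=\tilde\sigma(F_{k_n})$ (as $\pi\circ\tilde\sigma=\mathrm{id}_K$), whence $f_1=f_2$; this is the $\tilde\sigma$-analogue of Lemma~\ref{tidisun}(a)--(b), and it yields $\bar F_n=\bigsqcup_{f\in F^{\#}_n}E_{m_n}f$. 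That $(F^{\#}_n)$ is \tileable\ follows from Lemma~\ref{tile1} applied to $(F_n)$ — giving $(\tilde\sigma(F_n))$ \tileable\ with tiling sequence $(\sigma(K_n))$ — together with Lemma~\ref{tilesub}; moreover, by Lemma~\ref{disun} the set $C_{n+1}$ realising $F^{\#}_{n+1}=\bigsqcup_{c\in C_{n+1}}cF^{\#}_n$ is the product $\sigma(K_{k_{n+1}})\cdots\sigma(K_{k_n+1})$.

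The crux is the \tileable\ part of (2): proving that $\bar F_n$ is a monotile of $\bar F_{n+1}$. Using (3) at level $n+1$ and $F^{\#}_{n+1}=\bigsqcup_{c\in C_{n+1}}cF^{\#}_n$, I would first write $\bar F_{n+1}=E_{m_{n+1}}F^{\#}_{n+1}=\bigsqcup_{c\in C_{n+1}}E_{m_{n+1}}cF^{\#}_n$, the union being disjoint since the cosets $E_{m_{n+1}}(cf)$ are distinct over distinct pairs $(c,f)$. It then suffices to tile each block $E_{m_{n+1}}cF^{\#}_n$ by left translates of $\bar F_n=E_{m_n}F^{\#}_n$. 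In case (b), the subgroup $c_G(H)$ contains every $\sigma(K_j)$, hence $C_{n+1}\subseteq c_G(H)$; writing $E_{m_{n+1}}=\bigsqcup_{d\in D_{n+1}}dE_{m_n}$ (the subsequence $(E_{m_n})$ is \tileable\ by Lemma~\ref{tilesub}) and using $E_{m_n}c=cE_{m_n}$ gives $E_{m_{n+1}}cF^{\#}_n=\bigsqcup_{d}dc\,\bar F_n$. In case (a), the subsequence $(E_{m_n})$ is $\mathrm{Inn}(G)$-monotileable by Lemma~\ref{stilesub}, and since $H$ is normal in $G$ the conjugation $\phi_c$ by $c$ restricts to an automorphism of $H$, so $\phi_c(E_{m_n})=cE_{m_n}c^{-1}$ is a monotile of $E_{m_{n+1}}$, say $E_{m_{n+1}}=\bigsqcup_{d\in D^{(c)}}d\,cE_{m_n}c^{-1}$; multiplying on the right by $c$ again gives $E_{m_{n+1}}cF^{\#}_n=\bigsqcup_{d}dc\,\bar F_n$. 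In both cases, amalgamating over $c\in C_{n+1}$ yields $\bar F_{n+1}=\bigsqcup_{t\in T_{n+1}}t\bar F_n$ with $T_{n+1}=\bigcup_{c\in C_{n+1}}D^{(c)}c$, the disjointness across distinct $c$ being inherited from the decomposition above. Hence $\bar F_n$ is a monotile of $\bar F_{n+1}$, and together with (3) this proves (2).

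I expect this last step to be the main obstacle, and within it the case (a) bookkeeping: one must convert the right translate $E_{m_{n+1}}c$ into a disjoint union of left translates of $cE_{m_n}$ by pushing the conjugation $\phi_c$ through the $\mathrm{Inn}(G)$-monotileability, and then verify that the per-block tiling sets $D^{(c)}c$ amalgamate without overlap into a single tiling set for $\bar F_n$ inside $\bar F_{n+1}$. The disjointness across distinct $c$ is the delicate point, and it is exactly what conclusion (3) at level $n+1$ supplies.
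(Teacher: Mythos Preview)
Your proposal is correct and follows essentially the same route as the paper: apply Theorem~\ref{teofolner} with the associated section $\tilde\sigma$ to obtain the subsequences $(m_n)$ and $(k_n)$, set $F^{\#}_n=\tilde\sigma(F_{k_n})$, get (1) and (3) from Lemmas~\ref{tilesub} and~\ref{tidisun}, get the \tileable\ structure of $(F^{\#}_n)$ from Lemmas~\ref{tile1} and~\ref{tilesub}, and then prove the tiling in (2) by decomposing $\bar F_{n+1}$ over $c\in C_{n+1}$ and, for each $c$, using that $cE_{m_n}c^{-1}$ is a monotile of $E_{m_{n+1}}$ (via $\mathrm{Inn}(G)$-monotileability in case (a), and trivially via $c\in c_G(H)$ in case (b)). Your sets $C_{n+1}$ and $D^{(c)}$ are precisely the paper's $\bar K_n$ and $\bar E_{f,n}$, and your amalgamated tiling set $T_{n+1}=\bigcup_c D^{(c)}c$ is the paper's $\bar G_n$; the disjointness verification you single out as the delicate point is exactly what the paper handles via Lemma~\ref{tidisun}(b), and your direct coset argument for it is equivalent.
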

\begin{proof}
By Theorem~\ref{teofolner} applied to the section $\tilde\sigma$, there exist two strictly increasing sequences of natural numbers $(m_n)_{n\in\N}$ and $(k_n)_{n\in\N}$ such that the sequence $(\bar F_n)_{n\in\N}$ given by $\bar F_n=E_{m_n}F'_{k_n}$ for all $n\in\N$ is a right F\o lner sequence of $G$. So, for all $n\in\N$ let $F^{\#}_n= F'_{k_n}$.  Moreover, Lemma~\ref{tidisun} (with $j=m_n$ and $i=n$) gives $\bar F_n=\bigsqcup_{f^{\#}\in F^{\#}_n}E_{m_n}f^{\#}$ for all $n\in\N$, and so also (3) holds.
 Since, for every $n\in\N$, $\pi(F^{\#}_n)=\pi(F'_{k_n})=\pi(\tilde\sigma(F_{k_n}))=F_{k_n}$, the condition in (1) is satisfied by Lemma~\ref{tilesub}.

It remains to prove only that $(\bar F_n)_{n\in\N}$ is \tileable, so that also (2) is satisfied. The sequence $(F'_n)_{n\in\N}$ is \tileable\ by Lemma~\ref{tile1} 
and so also $(F'_{k_n})_{n\in\N}$ is \tileable\ by Lemma~\ref{tilesub}, that is, for every $n\in\N$ there exists a finite subset $\bar K_n$ of $G$ such that 
\begin{equation}\label{tieq4}
F_n^{\#}=F'_{k_n}=\bigsqcup_{f\in\bar K_n}fF'_{k_{n-1}}.
\end{equation} 
If (a) holds, then $(E_{m_n})_{n\in\N}$ is $\mathrm{Inn}(G)$-monotileable by Lemma~\ref{stilesub} and so, for every $n\in\N$ and $f\in\bar K_n$, the subset $fE_{m_{n-1}}f^{-1}$ is a monotile of $E_{m_n}$. If (b) holds, we immediately conclude that $\bar K_n\subseteq c_G(H)$, and so, for every $n\in\N$ and $f\in\bar K_n$,  $E_{m_{n-1}}=fE_{m_{n-1}}f^{-1}$ is a monotile of $E_{m_n}$. Hence, in both cases, for every $n\in\N$ there exists a finite subset $\bar E_{f,n}$ of $H$ such that
\begin{equation}\label{tieq5}
E_{m_n}=\bigsqcup_{e\in\bar E_{f,n}}\bar efE_{m_{n-1}}f^{-1}.
\end{equation} 
For every $n\in\N$ let
$\bar G_n=\bigsqcup_{f\in \bar K_n}\bar E_{f,n}f.$
Fixed $n\in\N_+$, \eqref{tieq4} and Lemma~\ref{tidisun}(b) yield
\begin{equation}\label{eqsist}
\bar F_n= E_{m_n}F'_{k_n}=E_{m_n}\bigsqcup_{f\in\bar K_n}fF'_{k_{n-1}}=\bigsqcup_{f\in\bar K_n}E_{m_n}fF'_{k_{n-1}},
\end{equation}
and by \eqref{tieq5}, \eqref{eqsist} and Lemma~\ref{tidisun}(b), we have 
\begin{equation*}\begin{split}
\bar F_n&=\bigsqcup_{f\in\bar K_n}E_{m_n}fF'_{k_{n-1}}=\bigsqcup_{f\in\bar K_n}\left(\left(\bigsqcup_{e\in \bar E_{f,n}}efE_{m_{n-1}}f^{-1}\right)fF'_{k_{n-1}}\right)=\\
&=\bigsqcup_{f\in\bar K_n}\bigsqcup_{e\in \bar E_{f,n}}efE_{m_{n-1}}F'_{k_{n-1}}= \bigsqcup_{f\in\bar K_n}\bigsqcup_{e\in \bar E_{f,n}}ef\bar F_{n-1}=\bigsqcup_{g\in\bar G_n}g\bar F_{n-1}.
\end{split}	\end{equation*}
Then $(\bar F_n)_{n\in\N}$ is \tileable.
\end{proof}

The following corollary about the stability of $\M$ under central extensions coincides with \cite[Lemma 6]{CC}.

\begin{corollary}\label{ticor3}
Consider two countable groups $G$ and $K$. Let $G$ be a central extension of $K$, and let $H\leq Z(G)$ such that
$0 \rightarrow H \rightarrow G \rightarrow K \rightarrow 0$ is a short exact sequence. If $H$ and $K$ are \tileable, then also $G$ is \tileable.
\end{corollary}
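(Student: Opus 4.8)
My plan is to obtain this corollary immediately from the Extension Theorem (Theorem~\ref{monosequ1}); the whole point is that centrality of the kernel $H$ makes the hypothesis ``$H$ is $\mathrm{Inn}(G)$-monotileable'' automatic, so no fresh construction of F\o lner sequences is needed.

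First I would fix a \tileable\ right F\o lner sequence $(E_n)_{n\in\N}$ of $H$, which exists because $H$ is \tileable; thus $E_n$ is a monotile of $E_{n+1}$ for every $n\in\N$. Since $H=\ker\pi$ is normal, each inner automorphism of $G$ restricts to an automorphism of $H$; and because $H\leq Z(G)$, this restriction is the identity, as $ghg^{-1}=h$ for all $g\in G$ and $h\in H$. Consequently the image of $\mathrm{Inn}(G)$ in $\Aut(H)$ is trivial, so for every $\phi\in\mathrm{Inn}(G)$ and every $n\in\N$ one has $\phi(E_n)=E_n$, which is a monotile of $E_{n+1}$ by the choice of the sequence. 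Hence $(E_n)_{n\in\N}$ is an $\mathrm{Inn}(G)$-monotileable sequence, and $H$ is $\mathrm{Inn}(G)$-monotileable. (This is precisely the remark preceding Example~\ref{iii}, specialized to the case where the conjugation action on $H$ is trivial.)

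Having verified this, both hypotheses of Theorem~\ref{monosequ1} hold: $K$ is \tileable\ by assumption and $H$ is $\mathrm{Inn}(G)$-monotileable by the previous step. The theorem then yields that $G$ is \tileable, as required. Alternatively, one can appeal directly to Claim~\ref{monosequ} under its condition (b): since $H\leq Z(G)$ forces $c_G(H)=G$, for any section $\sigma$ of $\pi$ and any tiling sequence $(K_n)_{n\in\N}$ associated to a \tileable\ sequence of $K$, the inclusion $\sigma(K_n)\subseteq c_G(H)$ holds trivially, so the Claim produces a \tileable\ right F\o lner sequence of $G$. I expect no genuine obstacle here: the only substantive observation is that a central kernel makes the conjugation action on $H$ trivial, so that local monotileability of $H$ and $\mathrm{Inn}(G)$-monotileability of $H$ coincide, and the corollary is a routine specialization of the Extension Theorem.
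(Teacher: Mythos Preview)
Your proof is correct and follows essentially the same approach as the paper's: the paper observes that any \tileable\ sequence of $H$ is trivially $\mathrm{Inn}(G)$-monotileable (because $H\leq Z(G)$ makes conjugation act as the identity on $H$) and then applies Claim~\ref{monosequ}. Your argument spells out the same observation in more detail and invokes Theorem~\ref{monosequ1} (equivalently Claim~\ref{monosequ}(a)), with Claim~\ref{monosequ}(b) noted as an alternative; both routes are valid and amount to the same specialization of the Extension Theorem.
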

\begin{proof}
Since $H$ is abelian, every \tileable\ sequence of $H$  is trivially $\mathrm{Inn}(G)$-monotileable. Therefore Claim~\ref{monosequ} applies.
\end{proof}

\begin{example}
The Heisenberg group $H_3(\mathbb{Z})$ is the group of $3\times 3$ upper unitriangular matrices in $M_3(\mathbb{Z})$. 
Since $H_3(\mathbb{Z})$ is a central extension of $\Z$ by $\mathbb{Z}^2$, so $H_3(\mathbb{Z})$ is locally monotileable by Corollary~\ref{ticor3}.  
\end{example}

If in Claim~\ref{monosequ} $H$ is finite, we can consider the \tileable\ F\o lner sequence $(E_n)_{n\in\N}$ given by $E_0=\{1_G\}$ and $E_n=H$ for all $n>0$. Since $H=\ker\pi$, we have that $H$ is normal in $G$. Therefore the sequence $(E_n)_{n\in\N}$ is invariant under conjugation by any element $g\in G$ and so it is $\mathrm{Inn}(G)$-monotileable. Thus we can apply Claim~\ref{monosequ} and  we obtain the following result.

\begin{corollary}
Consider two countable groups $G$ and $K$. Let $\pi\colon G\rightarrow K$ be a surjective homomorphism with $\ker \pi$ finite. If $K$ is \tileable\ then also $G$ is \tileable. 
\end{corollary}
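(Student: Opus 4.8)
The final statement to prove is the following corollary:

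\medskip
\noindent\textbf{Corollary.}\emph{ Consider two countable groups $G$ and $K$. Let $\pi\colon G\rightarrow K$ be a surjective homomorphism with $\ker\pi$ finite. If $K$ is locally monotileable, then also $G$ is locally monotileable.}

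\medskip
The plan is to reduce this statement directly to Claim~\ref{monosequ} (equivalently, to the Extension Theorem, Theorem~\ref{monosequ1}) by exhibiting an $\mathrm{Inn}(G)$-monotileable F\o lner sequence of the finite kernel $H=\ker\pi$. The key observation is that when $H$ is finite, it has a trivially nice F\o lner sequence, namely the eventually constant one. Since $H=\ker\pi$ is normal in $G$, this constant sequence is automatically invariant under every inner automorphism of $G$, which is exactly the hypothesis needed to invoke case (a) of Claim~\ref{monosequ}.

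First I would set $H=\ker\pi$ and record that $H$ is normal in $G$. Then I would define the F\o lner sequence $(E_n)_{n\in\N}$ of $H$ by $E_0=\{1_G\}$ and $E_n=H$ for all $n>0$; this is a locally monotileable right F\o lner sequence of the finite group $H$ (it is even exhaustive and congruent, as in Example~\ref{finitemonot}, and trivially F\o lner since $H$ is finite). Next I would verify that $(E_n)_{n\in\N}$ is $\mathrm{Inn}(G)$-monotileable: for any $g\in G$ the conjugation $c_g$ is an automorphism of $G$ carrying $H$ onto itself (by normality), so $c_g(E_n)=gE_ng^{-1}=H=E_n$ for $n>0$ and $c_g(E_0)=\{1_G\}=E_0$; hence $c_g(E_n)$ is a (mono)tile of $E_{n+1}$ for every $n$, as required. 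Finally, with $K$ locally monotileable by hypothesis and $H$ now equipped with an $\mathrm{Inn}(G)$-monotileable F\o lner sequence, condition (a) of Claim~\ref{monosequ} is satisfied, so Claim~\ref{monosequ} produces a locally monotileable right F\o lner sequence $(\bar F_n)_{n\in\N}$ of $G$, witnessing that $G$ is locally monotileable.

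There is essentially no hard part here, since all the real work is encapsulated in Claim~\ref{monosequ} and Theorem~\ref{teofolner}: the corollary is a clean specialization in which the finiteness of the kernel both supplies a trivial F\o lner sequence for $H$ and forces the $\mathrm{Inn}(G)$-invariance for free. The only point requiring a moment's care is to confirm that the constant sequence genuinely qualifies as a locally monotileable F\o lner sequence of $H$ in the sense of Definition~\ref{til-def} (the F\o lner condition is vacuous for finite groups, and $E_0=\{1\}$ is a monotile of $E_1=H$ via the coset decomposition $H=\bigsqcup_{h\in H}h\{1\}$), and that Claim~\ref{monosequ} may be applied with this $(E_n)_{n\in\N}$ as its chosen F\o lner sequence of $H$.
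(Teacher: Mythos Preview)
Your proposal is correct and follows essentially the same approach as the paper: both take $E_0=\{1_G\}$ and $E_n=H$ for $n>0$, observe that normality of $H=\ker\pi$ makes this sequence invariant under conjugation in $G$ (hence $\mathrm{Inn}(G)$-monotileable), and then apply Claim~\ref{monosequ}(a). Your write-up is slightly more detailed in checking that the constant sequence is indeed a locally monotileable F\o lner sequence, but the argument is the same.
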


\subsection{Countable virtually nilpotent groups are locally monotileable}

In this section we use the Extension Theorem (actually Claim~\ref{monosequ}) to prove first that a countable group with a locally monotileable normal subgroup of finite index is necessarily \tileable, and then that all countable abelian groups are \tileable. These results together give that all countable virtually nilpotent groups are \tileable. Note that the virtually nilpotent finitely generated groups are precisely those of polynomial growth by the celebrated Gromov's Theorem.

\begin{proposition}\label{ticor1}
If $G$ is a countable group having a normal subgroup of finite index $H$ which is \tileable, then so is $G$. 
\end{proposition}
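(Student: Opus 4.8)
The plan is to build a \tileable\ right F\o lner sequence of $G$ directly from one of $H$, exploiting that $H$ is normal of finite index. Set $\delta=[G:H]$ and, since $H$ is normal, fix a common transversal $R=\{r_0=1,r_1,\dots,r_{\delta-1}\}$ for the left (equivalently right) cosets, so that $G=\bigsqcup_{r\in R}Hr$. Because $H$ is \tileable, fix a \tileable\ right F\o lner sequence $(E_n)_{n\in\N}$ of $H$, with $E_0=\{1\}$ and associated tiling sequence $(D_n)_{n\in\N}$, so that $E_n=\bigsqcup_{d\in D_n}dE_{n-1}$ with $D_n\subseteq E_n\subseteq H$ for every $n\in\N_+$. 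I would then define $\bar F_0=\{1\}$ and $\bar F_n=E_{n-1}R$ for every $n\in\N_+$.

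First I would check that $(\bar F_n)_{n\in\N}$ is \tileable. The starting condition $\bar F_0=\{1\}$ holds by construction, and $\{1\}$ is trivially a monotile of $\bar F_1=E_0R=R$ (take $C=R$). The key point is a ``lifting'' of tilings: because every $d\in D_n$ lies in $H$, left multiplication by $d$ preserves the coset decomposition of $G$ into the $Hr$; hence for $d\neq d'$ in $D_n$ the sets $dE_{n-1}R$ and $d'E_{n-1}R$ are disjoint, since $h_1r=h_2r'$ with $h_1\in dE_{n-1}$, $h_2\in d'E_{n-1}$ forces $r=r'$ (same coset) and then $h_1=h_2$, contradicting $dE_{n-1}\cap d'E_{n-1}=\emptyset$. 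Consequently, for every $n\in\N_+$,
\[\bar F_{n+1}=E_nR=\Big(\bigsqcup_{d\in D_n}dE_{n-1}\Big)R=\bigsqcup_{d\in D_n}d\,(E_{n-1}R)=\bigsqcup_{d\in D_n}d\,\bar F_n,\]
so $\bar F_n$ is a monotile of $\bar F_{n+1}$ with tiling set $D_n$. Thus $(\bar F_n)_{n\in\N}$ is a \tileable\ sequence.

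Next I would verify that $(\bar F_n)_{n\in\N}$ is a right F\o lner sequence of $G$. Note that $\bar F_n=\bigsqcup_{r\in R}E_{n-1}r$, so $\card{\bar F_n}=\delta\,\card{E_{n-1}}$. Fix $g\in G$; right multiplication by $g$ induces a permutation of the cosets, and writing $rg=h_r\,r'_r$ with $h_r\in H$ and $r'_r\in R$ (where $r\mapsto r'_r$ is a bijection of $R$), one obtains, coset by coset,
\[\bar F_ng\setminus\bar F_n=\bigsqcup_{r\in R}\big(E_{n-1}h_r\setminus E_{n-1}\big)\,r'_r,\]
whence $\card{\bar F_ng\setminus\bar F_n}=\sum_{r\in R}\card{E_{n-1}h_r\setminus E_{n-1}}$. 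Since $\{h_r:r\in R\}$ is a fixed finite subset of $H$ and $(E_n)_{n\in\N}$ is right F\o lner for $H$, each quotient $\card{E_{n-1}h_r\setminus E_{n-1}}/\card{E_{n-1}}$ tends to $0$, and therefore
\[\frac{\card{\bar F_ng\setminus\bar F_n}}{\card{\bar F_n}}=\frac{1}{\delta}\sum_{r\in R}\frac{\card{E_{n-1}h_r\setminus E_{n-1}}}{\card{E_{n-1}}}\longrightarrow 0\]
as $n\to\infty$. This shows that $(\bar F_n)_{n\in\N}$ is a \tileable\ right F\o lner sequence of $G$, so $G$ is \tileable.

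The main thing to get right is the interaction between the left tiling of $H$ and the right transversal $R$: everything works precisely because the tiling elements $D_n$ sit in $H$ and hence act on the left without mixing the right cosets $Hr$, which is what keeps both the disjointness in the tiling step and the telescoping in the F\o lner estimate under control. This direct construction sidesteps the need to produce an $\mathrm{Inn}(G)$-monotileable F\o lner sequence of $H$; alternatively, one could invoke Claim~\ref{monosequ} with the finite quotient $K=G/H$, whose tiling sequence is trivial from the second step on, but the hands-on argument above is shorter and self-contained.
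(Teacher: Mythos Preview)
Your proof is correct and follows essentially the same strategy as the paper: define $\bar F_0=\{1\}$ and $\bar F_n=E_{?}\,R$ with $R$ a transversal of $H$ in $G$, then verify separately the F\o lner property and the local monotileability. The monotileability argument is identical to the paper's: since the tiling elements lie in $H$, left multiplication by them does not mix the right cosets $Hr$, so the $H$-tiling lifts to a $G$-tiling.

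The one genuine difference is in the F\o lner verification. The paper invokes the general machinery of Theorem~\ref{teofolner} (the extension theorem for F\o lner sequences), which forces passing to a subsequence $(E_{m_n})_{n\in\N}$. You instead give a direct coset-by-coset computation: writing $rg=h_r r'_r$ and observing that $\bar F_ng\setminus\bar F_n$ decomposes as $\bigsqcup_{r\in R}(E_{n-1}h_r\setminus E_{n-1})r'_r$, you reduce the F\o lner condition for $G$ to finitely many instances of the F\o lner condition for $H$. This is cleaner and avoids both the subsequence and the appeal to Theorem~\ref{teofolner}, at the cost of being specific to the finite-quotient case. One small cosmetic point: the inclusion $D_n\subseteq E_n$ you state need not hold in general (it would require $1\in E_{n-1}$), but your argument only uses $D_n\subseteq H$, which does follow from $E_{n-1},E_n\subseteq H$.
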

\begin{proof}
Let $(E_n)_{n\in\N}$ be a \tileable\ right F\o lner sequence of $H$. Let $K =G/H$, let $\pi:G\rightarrow K$ be the canonical projection and $\iota:H\to G$ be the inclusion of $H$ in $G$. Fix a section $\sigma\colon K\rightarrow G$ for $\pi$ such that $\sigma(1_K)=1_G$, and let $\sigma(K)=R$. 
Since $K$ is finite, consider the \tileable\ F\o lner sequence $(F_n)_{n\in\N}$ given by $F_0=\{1_K\}$ and $F_n=K$ for all $n\in\N_+$. By Theorem~\ref{teofolner} there is an increasing sequence of natural numbers $(m_n)_{n\in\N}$ such that the sequence $(\bar F_n)_{n\in\N}$, given by $\bar F_0=\{1\}$ and $\bar F_n=E_{m_n}R$ for all $n\in\N_+$, is a right F\o lner sequence of $G$ (note that $R$ is finite and so also $\bar F_n$ is finite for every $n\in\N$). 
	
It remains to prove that $(\bar F_n)_{n\in\N}$ is \tileable. Clearly $\bar F_0$ is a monotile of $\bar F_1$, so we can suppose $n>1$. The sequence $(E_n)_{n\in\N}$ is \tileable\ by hypothesis so by Lemma~\ref{tilesub} also $(E_{m_n})_{n\in\N}$ is \tileable. Therefore for every $n>1$ there is a finite subset $\bar E_n$ of $H$ such that 
\begin{equation}\label{tieq31}
E_{m_{n}}=\bigsqcup_{\bar e\in\bar E_{n}}\bar eE_{m_{n-1}}.
\end{equation} 
For $\bar e_1,\bar e_2\in\bar E_n$ with $\bar e_1\neq\bar e_2$, we have $\bar e_1E_{m_{n-1}}\cap \bar e_2E_{m_{n-1}}=\emptyset$. Therefore, since $R$ is a set of right coset representatives of $H$ in $G$ and $E_{m_n}\subseteq H$,
\begin{equation}\label{tieq32}
\bar e_1E_{m_{n-1}}R\cap\bar e_2E_{m_{n-1}}R=\emptyset.
\end{equation}
By \eqref{tieq31} and \eqref{tieq32} we conclude that, for all $n\in\N$, $\bar F_n=E_{m_n}R=\bigsqcup_{\bar e\in\bar E_n}\bar eE_{m_{n-1}}R=\bigsqcup_{\bar e\in\bar E_n}\bar e\bar F_{n-1}.$
\end{proof}

\begin{example}
Consider the group $G = \Z \rtimes \Z_2$, where $\Z_2$ acts on $\Z$ as the automorphism $t(x)=-x$. The group $G$ is the semidirect product between $\Z$ and $\Z_2$ therefore
\(0\rightarrow \Z\rightarrow G\rightarrow \Z_2\rightarrow0\) is a short exact sequence. We know that $\Z$ and $\Z_2$ are both \tileable\ groups, so by Proposition~\ref{ticor1} we have that also $G$ is \tileable.
%
%
\end{example}

We apply twice the following easy observation in the proof of Theorem~\ref{tileable}.

\begin{lemma}\label{tilecard}
Let $G$ be a group and $A, B\in\mathcal{P}_{fin}(G)$. If $g\in G$, then
$$|(AgB)\setminus(AB)|\leq|(Ag)\setminus A|\,|B|\quad\text{and}\quad |(ABg)\setminus(AB)|\leq|(Bg)\setminus B|\,|A|.$$
\end{lemma}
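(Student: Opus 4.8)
The plan is to establish each of the two inequalities separately, in each case by slicing the double product into single translates and exploiting cancellativity of $G$. Since the two inequalities are mirror images of one another (interchanging left and right translation), I would prove the first one in detail and obtain the second by the symmetric argument.

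For the first inequality, I would start from the decomposition $AgB=\bigcup_{b\in B}Agb$ and observe that, since $Ab\subseteq AB$ for each $b\in B$,
$$(AgB)\setminus(AB)=\bigcup_{b\in B}\bigl((Agb)\setminus(AB)\bigr)\subseteq\bigcup_{b\in B}\bigl((Agb)\setminus(Ab)\bigr).$$
The key step is then the identity $(Agb)\setminus(Ab)=\bigl((Ag)\setminus A\bigr)b$: indeed, for $a\in A$ the element $agb$ lies in $Ab$ if and only if $agb=a'b$ for some $a'\in A$, which by right cancellation is equivalent to $ag\in A$. Since right multiplication by $b$ is injective, $\abs{(Agb)\setminus(Ab)}=\abs{(Ag)\setminus A}$, and summing the displayed inclusion over $b\in B$ yields $\abs{(AgB)\setminus(AB)}\le\abs{B}\,\abs{(Ag)\setminus A}$.

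For the second inequality I would argue symmetrically, writing $ABg=\bigcup_{a\in A}aBg$ and using $aB\subseteq AB$ together with the left-cancellation identity $(aBg)\setminus(aB)=a\bigl((Bg)\setminus B\bigr)$, whose verification is the exact analogue of the one above with cancellation performed on the right factor; summing over $a\in A$ then gives $\abs{(ABg)\setminus(AB)}\le\abs{A}\,\abs{(Bg)\setminus B}$.

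There is no genuine obstacle here, as the statement is elementary; the only point requiring care is the combination of the union decomposition with the inclusion $Ab\subseteq AB$ (respectively $aB\subseteq AB$), which is precisely what allows one to replace the hard-to-control set $(Agb)\setminus(AB)$ by the clean translate $\bigl((Ag)\setminus A\bigr)b$ before summing. Cancellativity of $G$ is invoked exactly once in each half, to turn membership in a translate into membership in $A$ (respectively $B$).
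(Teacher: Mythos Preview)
Your proof is correct and follows essentially the same route as the paper: both establish the inclusion $(AgB)\setminus(AB)\subseteq((Ag)\setminus A)B$ and then bound the cardinality of the right-hand side. The paper does this in one line by observing that $agb\notin AB$ forces $ag\notin A$, whereas you reach the same inclusion by slicing over $b\in B$ and using the identity $(Agb)\setminus(Ab)=((Ag)\setminus A)b$; the arguments are equivalent, and the second inequality is handled symmetrically in both.
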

\begin{proof}
Fix $g\in G$ and let $a\in A$ and $b\in B$ such that $agb\notin AB$. This clearly implies that $ag\notin A$. So, $(AgB)\setminus(AB)\subseteq (Ag\setminus A)B$, and it is straightforward to deduce the first inequality. 
The second inequality can be proved analogously.
\end{proof}

\begin{theorem}\label{tileable}
Assume that the group $G$ is increasing union of its subgroups $G_0\leq G_1\leq G_2\leq\dots$, and that:
\begin{enumerate}[(1)]
\item every $G_n$ is \tileable,
\item every quotient $G_{n+1}/G_n$ is \tileable,
\item every quotient $G_{n+1}/G_n$ admits a section $\sigma_n:G_{n+1}/G_n \to G_{n+1}$ such that $\sigma_n(1)=1$ and $\sigma_n(G_{n+1}/G_n) \leq c_{G_{n+1}}(G_n)$.
\end{enumerate}
Then $G$ is \tileable.
\end{theorem}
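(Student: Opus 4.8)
The plan is to iterate Claim~\ref{monosequ} up the tower $G_0\le G_1\le\cdots$ and then to diagonalize. Note first that $G=\bigcup_n G_n$ is countable, since each $G_n$ is \tileable\ and hence countable, so it suffices to produce a \tileable\ right \Folner sequence of $G$. I would construct inductively, for every $j\in\N$, a \tileable\ right \Folner sequence $(\Phi^j_n)_{n\in\N}$ of $G_j$ together with a strictly increasing index map $m^{(j)}\colon\N\to\N$. Let $(\Phi^0_n)_{n\in\N}$ be any \tileable\ right \Folner sequence of $G_0$, which exists by (1). Given $(\Phi^j_n)_{n\in\N}$, apply Claim~\ref{monosequ} to $0\to G_j\to G_{j+1}\to G_{j+1}/G_j\to0$, taking $H=G_j$ with the already built kernel sequence $(\Phi^j_n)_{n\in\N}$, taking for $K=G_{j+1}/G_j$ any \tileable\ right \Folner sequence (which exists by (2)), and taking the section $\sigma_j$ from hypothesis (3). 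Since $\sigma_j(G_{j+1}/G_j)\subseteq c_{G_{j+1}}(G_j)$, condition (b) of Claim~\ref{monosequ} holds; the Claim then yields a \tileable\ right \Folner sequence $(\Phi^{j+1}_n)_{n\in\N}$ of $G_{j+1}$ and a strictly increasing $(m^{(j)}_n)_{n\in\N}$ with $\Phi^{j+1}_n=\bigsqcup_{f\in F^{\#}_n}\Phi^j_{m^{(j)}_n}f$ for all $n$, where the lift $F^{\#}_n=\tilde\sigma(F_{k_n})$ lies in $c_{G_{j+1}}(G_j)$ because $\tilde\sigma$ is built from $\sigma_j$.

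The step I expect to carry the most weight is turning this decomposition into a genuine monotile relation between consecutive levels. Because $F^{\#}_n\subseteq c_{G_{j+1}}(G_j)$ centralizes $G_j\supseteq\Phi^j_{m^{(j)}_n}$, every factor $f\in F^{\#}_n$ commutes with $\Phi^j_{m^{(j)}_n}$, so $\Phi^j_{m^{(j)}_n}f=f\,\Phi^j_{m^{(j)}_n}$ and hence
\[\Phi^{j+1}_n=\bigsqcup_{f\in F^{\#}_n}f\,\Phi^j_{m^{(j)}_n}.\]
Thus $\Phi^j_{m^{(j)}_n}$ is a monotile of $\Phi^{j+1}_n$ for every $n\in\N$. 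This cross-level monotile relation, enabled precisely by the centralizer hypothesis (3) which converts the right translates of Claim~\ref{monosequ}(3) into left translates, is what allows the separate tilings of the $G_j$ to be spliced into a single one for $G$.

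Next I would build a diagonal ``staircase''. Starting from $\Phi^0_0=\{1\}$, I pass through the terms $\Phi^j_n$ with $n$ increasing within level $j$ (each a monotile of the next, since $(\Phi^j_n)_n$ is \tileable), and whenever the index reaches $m^{(j)}_{p_j}$ I jump to $\Phi^{j+1}_{p_j}$, which is a monotile step by the relation above, then continue within level $j+1$ from index $p_j$. The jump indices $p_0<p_1<\cdots$ are fixed below; since $m^{(j)}_n\ge n$ for a strictly increasing sequence, each within-level run is nonempty for $p_j$ large. By construction every consecutive pair of the resulting sequence $(F_i)_{i\in\N}$ is a monotile, $F_0=\{1\}$, and the levels tend to infinity, so $(F_i)_{i\in\N}$ is a \tileable\ sequence of $G$.

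It remains to secure the right \Folner property for $G$, which is the main technical obstacle: the jump from level $j$ to $j+1$ replaces $\Phi^j_{m^{(j)}_{p_j}}$ by the strictly larger set $\Phi^{j+1}_{p_j}$ and could a priori spoil the \Folner estimates. Having already built all the $(\Phi^j_n)_n$, I fix the $p_j$ by look-ahead: enumerate $G=\{g_1,g_2,\dots\}$ and, using that $(\Phi^{j+1}_n)_n$ is a right \Folner sequence of $G_{j+1}$, choose $p_j$ so large that $\card{\Phi^{j+1}_ng\setminus\Phi^{j+1}_n}/\card{\Phi^{j+1}_n}<1/(j+1)$ for all $n\ge p_j$ and all $g\in\{g_1,\dots,g_j\}\cap G_{j+1}$. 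Then every term of $(F_i)$ at level $j+1$ has \Folner ratio $<1/(j+1)$ against each such $g$. Fixing $g=g_t\in G$, say $g_t\in G_{j_0}$, once the staircase has climbed past level $\max(t,j_0-1)$ all its terms satisfy this bound, so $\card{F_ig_t\setminus F_i}/\card{F_i}\to0$. Hence $(F_i)_{i\in\N}$ is a \tileable\ right \Folner sequence of $G$, proving that $G$ is \tileable. (Alternatively, the \Folner estimates at the jumps can be carried out by hand from the product structure $\Phi^{j+1}_n=\Phi^j_{m^{(j)}_n}F^{\#}_n$ via Lemma~\ref{tilecard}.)
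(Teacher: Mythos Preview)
Your proof is correct and follows the same overall strategy as the paper: iterate Claim~\ref{monosequ} up the tower $G_0\le G_1\le\cdots$ to produce \tileable\ right \Folner sequences at each level, exploit hypothesis~(3) to flip the right translates of Claim~\ref{monosequ}(3) into left translates (so that kernel terms become monotiles of the next-level terms), and then diagonalize. The executions differ only in how the diagonalization is carried out. The paper takes the pure diagonal $(E_{n,n})_n$ and verifies the \Folner property by proving the monotonicity inequality
\[
\frac{|E_{n+1,j}g\setminus E_{n+1,j}|}{|E_{n+1,j}|}\le\frac{|E_{n,j}g\setminus E_{n,j}|}{|E_{n,j}|}\qquad(g\in G_n),
\]
obtained from Lemma~\ref{tilecard} together with the commutation $gF^{\#}=F^{\#}g$ coming from centrality. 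You instead build a staircase that keeps the intermediate within-level terms and secure the \Folner condition by a look-ahead choice of the jump indices $p_j$. Your route makes the local monotileability of the resulting sequence immediate (every step is either a within-level step or a jump, both monotile steps by construction), at the price of slightly heavier index bookkeeping; the paper's route keeps the sequence shorter but needs the monotonicity argument. Your closing parenthetical remark about handling the jumps via Lemma~\ref{tilecard} is precisely the paper's method.
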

\begin{proof}
We build recursively a \tileable\ right F\o lner sequence $(E_{n,j})_{j\in\mathbb{N}}$ of $G_n$ for every $n\in\N$.
Let $(E_{0,j})_{j\in\mathbb{N}}$ be a \tileable\ right F\o lner sequence of $G_0$. Fix $n\in\mathbb{N}$ and suppose we have already defined a \tileable\ right F\o lner sequence $(E_{n,j})_{j\in\mathbb{N}}$ of $G_n$. Consider the exact sequence
\begin{equation}\label{tieqexse}
0\rightarrow G_n\rightarrow G_{n+1}\rightarrow {G_{n+1}}/{G_n}\rightarrow 0.
\end{equation} 
Since $\sigma_n(G_{n+1}/G_n) \leq c_{G_{n+1}}(G_n)$, by Claim~\ref{monosequ} there exist a \tileable\ sequence $(F^{\#}_{n+1,j})_{j\in\N}$ of $G_{n+1}$ and a strictly increasing sequence of natural numbers $(m_{n+1,j})_{j\in\N}$ with $m_{n+1,j}\geq j$ for all $j\in\N$, such that the sequence $(E_{n+1,j})_{j\in\N}$, defined letting, for every $j\in\N$,
\begin{equation*}
E_{n+1,j}=E_{n,m_{n+1,j}}F^{\#}_{n+1,j},
\end{equation*}
is a \tileable\ right F\o lner sequence of $G_{n+1}$. Since $F_{n+1,j}^{\#}\subseteq \sigma_n(G_{n+1}/G_n) \leq c_{G_{n+1}}(G_n)$, then
\begin{equation}\label{tieqabe0}
E_{n+1,j}=\bigsqcup_{f\in F^{\#}_{n+1,j}}fE_{n,m_{n+1,j}}.
\end{equation}	
It remains to prove that the diagonal sequence $(E_{n,n})_{n\in\mathbb{N}}$ is a \tileable\ right F\o lner sequence of $G$.

\begin{claim}\label{claim2}
$(E_{n,n})_{n\in\mathbb{N}}$ is a \tileable\ sequence of $G$.
\end{claim}
For every $n\in\N$, \eqref{tieqabe0} yields
\begin{equation}\label{ult1}
E_{n+1,n+1}=\bigsqcup_{f\in F^{\#}_{n+1,n+1}}fE_{n,m_{n+1,n+1}}.
\end{equation}
Let $(K_{n,j})_{n\in\N}$ be a tiling sequence associated to $(E_{n,j})_{n\in\N}$. Since for every $n\in\N$ and $j\in\N$, we have $m_{n+1,j}> j$, the product $M_{n,j}=K_{n,m_{n+1,j}}\ldots K_{n,j+1}$ makes sense and 
\begin{equation}\label{eqnew}
E_{n,m_{n+1,j}}=M_{n,j}E_{n,j}.
\end{equation}

Fix $n\in\N$ and $j=n+1$. Then $M_{n,n+1} = K_{n,m_{n+1,n+1}}\ldots K_{n,n+2}$, and letting $M=M_{n,n+1}K_{n,n+1}$,
\begin{equation}\label{ult2}
E_{n,m_{n+1,n+1}}=M_{n,n+1}K_{n,n+1}E_{n,n}= ME_{n,n}=\bigsqcup_{m\in M} mE_{n,n}.
\end{equation}
Combining \eqref{ult1} and \eqref{ult2}, we get
\begin{equation*}
E_{n+1,n+1}=\bigsqcup_{f\in F^{\#}_{n+1,n+1}}f\left(\bigsqcup_{m\in M}mE_{n,n}\right).
\end{equation*}	
For $A_{n+1}=F^{\#}_{n+1,n+1}M$, by Lemma~\ref{disun} we obtain that $E_{n+1,n+1}=\bigsqcup_{a\in A_{n+1}}aE_{n,n}$. This proves Claim~\ref{claim2}.

\begin{claim}\label{claim1}
$(E_{n,n})_{n\in\mathbb{N}}$ is a right F\o lner sequence of $G$.
\end{claim}
Take $g\in G$. Since $G=\bigcup_{n\in\mathbb{N}}G_n$, there exists $t\in \N$ such that $g\in G_t$. Pick $n\in\N$ with $n\geq t$. Then $g\in G_n$, so $gF^{\#}_{n+1,j}=F^{\#}_{n+1,j}g$ for every $j\in\N$. Moreover, for every $j\in\N$,
\begin{equation}\label{eqabe1}
\begin{split}
\frac{|E_{n+1,j}g\setminus E_{n+1,j}|}{|E_{n+1,j}|}&=	\frac{|E_{n,m_{n+1,j}}F^{\#}_{n+1,j}g\setminus (E_{n,m_{n+1,j}}F^{\#}_{n+1,j})|}{|E_{n,m_{n+1,j}}F^{\#}_{n+1,j}|}=\frac{|E_{n,m_{n+1,j}}gF^{\#}_{n+1,j}\setminus (E_{n,m_{n+1,j}}F^{\#}_{n+1,j})|}{|E_{n,m_{n+1,j}}F^{\#}_{n+1,j}|}\leq\\
&\leq\frac{|E_{n,m_{n+1,j}}g\setminus E_{n,m_{n+1,j}}||F^{\#}_{n+1,j}|}{|E_{n,m_{n+1,j}}||F^{\#}_{n+1,j}|}=\frac{|E_{n,m_{n+1,j}}g\setminus E_{n,m_{n+1,j}}|}{|E_{n,m_{n+1,j}}|},
\end{split}\end{equation}
where the inequality holds by Lemma~\ref{tilecard}.
By Remark~\ref{tire1} and by \eqref{eqnew} we obtain that
\begin{equation}\label{tieq9}
|E_{n,m_{n+1,j}}|=|M_{n,j}E_{n,j}|=|K_{n,m_{n+1,j}}\ldots K_{n,j+1}E_{n,j}|=|K_{n,m_{n+1,j}}|\ldots|K_{n,j+1}||E_{n,j}|=|M_{n,j}||E_{n,j}|.\end{equation}
Now \eqref{tieq9} and Lemma~\ref{tilecard} give
\begin{equation}\label{eqabe2}
\frac{|E_{n,m_{n+1,j}}g\setminus E_{n,m_{n+1,j}}|}{|E_{n,m_{n+1,j}}|}=\frac{|M_{n,j}E_{n,j}g\setminus (M_{n,j}E_{n,j})|}{|M_{n,j}E_{n,j}|}\leq\frac{|E_{n,j}g\setminus E_{n,j}||M_{n,j}|}{|E_{n,j}||M_{n,j}|}=\frac{|E_{n,j}g\setminus E_{n,j}|}{|E_{n,j}|},
\end{equation}
and so \eqref{eqabe1} and \eqref{eqabe2} yield
\begin{equation*}\label{eqabe3}
\frac{|E_{n+1,j}g\setminus E_{n+1,j}|}{|E_{n+1,j}|}\leq\frac{|E_{n,j}g\setminus E_{n,j}|}{|E_{n,j}|}.
\end{equation*}
Using this inequality for $n=t$, by induction we get that, for every $n\geq t$,
\begin{equation}\label{eqabe4}
\frac{|E_{n,j}g\setminus E_{n,j}|}{|E_{n,j}|}\leq\frac{|E_{t,j}g\setminus E_{t,j}|}{|E_{t,j}|}.
\end{equation} 
By the choice of the sequence $(E_{t,j})_{j\in\N}$, 
\begin{equation}\label{eqabe5}
\lim_{j\to\infty}\frac{\left| E_{t,j}g\setminus E_{t,j}\right|}{\left| E_{t,j}\right|}=0.
\end{equation}
By \eqref{eqabe4} and \eqref{eqabe5} we conclude that
\begin{equation*}
\lim_{n\to\infty}\frac{| E_{ n,n}g\setminus E_{n,n}|}{|E_{ n,n}|}=\lim_{j\to\infty}\frac{|E_{t+j,t+j}g\setminus E_{t+j,t+j}|}{|E_{t+j,t+j}|}
\leq\lim_{j\to\infty}\frac{| E_{t,t+j}g\setminus E_{t,t+j}|}{|E_{t,t +j}|}=\lim_{j\to\infty}\frac{|E_{t,j}g\setminus E_{t,j}|}{|E_{t,j}|}=0.
\end{equation*}
This proves Claim~\ref{claim1}, and so concludes the proof of the theorem.
\end{proof}

Now Corollary~\ref{monofingen} can be obtained as a consequence of Theorem~\ref{tileable}.
Moreover, by Theorem~\ref{tileable} we have that $\Q \in \M$ (for an alternative proof see Theorem~\ref{tipro3}) and the following more general results.

\begin{corollary}\label{abelian}
If $G$ is a countable abelian group, then $G\in\M$.
\end{corollary}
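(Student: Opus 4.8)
The plan is to realize $G$ as an increasing union of a chain of finitely generated subgroups and then invoke Theorem~\ref{tileable}, which is tailor-made for exactly this situation. Since $G$ is countable, I would first enumerate its elements as $G=\{g_n\colon n\in\N\}$ and set $G_n=\langle g_0,\ldots,g_n\rangle$ for every $n\in\N$. Then $(G_n)_{n\in\N}$ is an increasing chain of subgroups with $G=\bigcup_{n\in\N}G_n$, and, crucially, each $G_n$ is a \emph{finitely generated} abelian group.

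Next I would verify the three hypotheses of Theorem~\ref{tileable}. For hypothesis (1), each $G_n$ is finitely generated abelian, hence locally monotileable by Corollary~\ref{monofingen}. For hypothesis (2), each quotient $G_{n+1}/G_n$ is again finitely generated abelian, being a quotient of the finitely generated abelian group $G_{n+1}$, so it lies in $\M$ by Corollary~\ref{monofingen} as well. (Note that I am not using any closure of $\M$ under quotients, which is not known in general; I only use the concrete structural fact that quotients of finitely generated abelian groups are finitely generated abelian.)

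The decisive simplification comes from hypothesis (3): since $G$ is abelian, for every $n$ the centralizer $c_{G_{n+1}}(G_n)$ equals all of $G_{n+1}$. Hence any set-theoretic section $\sigma_n\colon G_{n+1}/G_n\to G_{n+1}$ of the canonical projection with $\sigma_n(1)=1$ automatically satisfies $\sigma_n(G_{n+1}/G_n)\subseteq G_{n+1}=c_{G_{n+1}}(G_n)$; such a section exists by choosing one coset representative for each coset and picking $1$ for the trivial coset. With all three hypotheses of Theorem~\ref{tileable} in place, I would conclude $G\in\M$.

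I do not anticipate a genuine obstacle here: the entire difficulty has been absorbed into Theorem~\ref{tileable}, and the abelian hypothesis trivializes condition (3) because all centralizers are the whole group. The only point requiring a moment of care is that the sections in Theorem~\ref{tileable} are set-theoretic right inverses rather than group homomorphisms, so no splitting of the extension \eqref{tieqexse} is needed and the argument goes through even when $G_{n+1}/G_n$ has torsion.
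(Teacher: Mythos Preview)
Your proof is correct and follows essentially the same approach as the paper: write $G$ as an increasing union of finitely generated subgroups, invoke Corollary~\ref{monofingen} for hypotheses (1) and (2) of Theorem~\ref{tileable}, and note that hypothesis (3) is automatic because $G$ is abelian. The paper's proof is terser and leaves the verification of (3) implicit, but the argument is the same.
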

\begin{proof}
By hypothesis $G$ is increasing union of and increasing chain $\{G_n\colon n\in\N\}$ of finitely generated subgroups; moreover, $G_{n+1}/G_n$ is finitely generated for every $n\in\N$. By Corollary~\ref{monofingen}, for every $n\in\N$, $G_n$ and $G_{n+1}/G_n$ are \tileable. Hence $G$ is \tileable\ by Theorem~\ref{tileable}. 
\end{proof}

\begin{corollary}\label{hypercentral}
If $G$ is a countable hypercentral group of length $<\omega^2$, then $G\in\M$.
\end{corollary}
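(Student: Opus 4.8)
The plan is to argue by transfinite induction on the hypercentral length $\alpha<\omega^2$ of $G$, the case $\alpha=0$ (so $G$ trivial) being clear. Throughout I will use three standard facts about the upper central series of a countable hypercentral group: every term $Z_\gamma(G)$ is normal in $G$; one has $Z_\delta(G/Z_\gamma(G))=Z_{\gamma+\delta}(G)/Z_\gamma(G)$; and any term of, or quotient by a term of, the upper central series is again a countable hypercentral group of length no larger than that of $G$. I also record the ordinal arithmetic that the limit ordinals below $\omega^2$ are exactly the $\omega\cdot m$ with $1\le m<\omega$, so the induction really meets only finitely many limit stages.

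For the successor case $\alpha=\beta+1$ I would look at the short exact sequence $0\to Z(G)\to G\to G/Z(G)\to 0$. Since $Z_\delta(G/Z(G))=Z_{\delta+1}(G)/Z(G)$, the quotient $G/Z(G)$ is hypercentral of length $\beta<\alpha$, hence lies in $\M$ by the inductive hypothesis, while $Z(G)$ is a countable abelian group and so lies in $\M$ by Corollary~\ref{abelian}. As $Z(G)$ is \emph{central} in $G$, this is a central extension and Corollary~\ref{ticor3} gives $G\in\M$. This step is clean and already disposes of every group of finite length, i.e.\ of all countable nilpotent groups.

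For the limit case $\alpha=\omega\cdot m$ I would realise $G$ as the increasing union $G=\bigcup_n G_n$ with $G_n=Z_{\gamma_n}(G)$ for a sequence $\gamma_n\nearrow\alpha$ cofinal in $\alpha$, and then invoke the direct-limit result Theorem~\ref{tileable}. Conditions~(1) and~(2) of that theorem hold by the inductive hypothesis: each $G_n$ is hypercentral of length $\gamma_n<\alpha$, and writing $\gamma_n+\delta_n=\gamma_{n+1}$ each quotient $G_{n+1}/G_n=Z_{\gamma_{n+1}}(G)/Z_{\gamma_n}(G)=Z_{\delta_n}(G/Z_{\gamma_n}(G))$ is hypercentral of length $\delta_n<\alpha$; both are countable and hence in $\M$. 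It is precisely here, in keeping all the auxiliary lengths below $\alpha$ while staying above $\omega$, that the hypothesis $\alpha<\omega^2$ is consumed.

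The hard part will be condition~(3) of Theorem~\ref{tileable}: one must produce sections $\sigma_n\colon G_{n+1}/G_n\to G_{n+1}$ with image in the centraliser $c_{G_{n+1}}(G_n)$, equivalently $G_{n+1}=G_n\cdot c_{G_{n+1}}(G_n)$. This \emph{fails} for the naive filtration $G_n=Z_n(G)$ — already in a free nilpotent group of class $3$ one computes $c_{Z_3}(Z_2)=Z_2$, so $Z_2\cdot c_{Z_3}(Z_2)=Z_2\ne Z_3$ — so the filtration and the sections have to be chosen with care. The mechanism I would use is that each consecutive factor $Z_{\gamma+1}(G)/Z_\gamma(G)$ is central in $G/Z_\gamma(G)$, so the conjugation action of $G_{n+1}$ on the normal subgroup $G_n$ is trivial on the successive central factors, i.e.\ ``unipotent''; this is what one leverages either to build centralising transversals directly, or, should that route stall, to replace condition~(3) by the alternative condition~(a) of Claim~\ref{monosequ}, constructing $\mathrm{Inn}(G_{n+1})$-monotileable \Folner sequences on the nilpotent layers $G_n$ via the $X$-monotileable machinery (Lemma~\ref{exle1}, Lemma~\ref{stilesub}) and the same diagonal argument used in the proof of Theorem~\ref{tileable}. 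Verifying this centralising/invariance property of the sections is the technical heart of the argument. Finally, the virtually hypercentral strengthening (Theorem~\ref{vhy}) would follow at once by combining the result just proved with Proposition~\ref{ticor1} on locally monotileable normal subgroups of finite index.
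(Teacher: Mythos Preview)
Your overall strategy mirrors the paper's: transfinite induction on the length, successor steps via Corollary~\ref{ticor3}, limit steps via Theorem~\ref{tileable} applied to the filtration $G_n=Z_{(m-1)\omega+n}(G)$. Your successor step differs only cosmetically from the paper's (you quotient by $Z(G)$ and apply the hypothesis to $G/Z(G)$; the paper instead extends $Z_\beta(G)$ to $Z_{\beta+1}(G)$), and both versions are correct uses of Corollary~\ref{ticor3}. At the limit step the paper simply asserts that ``those subgroups satisfy the hypotheses of Theorem~\ref{tileable}'' and moves on, whereas you isolate condition~(3) as the crux and give a counterexample to its automatic validity.

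Your caution is warranted: condition~(3) genuinely fails for the filtration the paper writes down. Your class-$3$ free-nilpotent example is correct, and the obstruction survives to length~$\omega$. For instance, take $G=\bigoplus_{k\ge 2}UT_k(\Z/p\Z)$, which is countable with $Z_\omega(G)=G$; in the $UT_4$ summand one computes $c_{UT_4}(Z_2(UT_4))=\{g:a_{12}=a_{34}=0\}$, whence $Z_2(G)\cdot c_{Z_3(G)}(Z_2(G))\subsetneq Z_3(G)$ and no centralising section $\sigma_2$ can exist. So the paper's own proof glosses over exactly the point you flag. That said, your proposal does not close the gap either: your first suggested route (``build centralising transversals directly'') is not carried out and is blocked, for the obvious filtration, by the example just given; your fallback via condition~(a) of Claim~\ref{monosequ} (manufacturing $\mathrm{Inn}(G_{n+1})$-monotileable F\o lner sequences on each $G_n$) is only a programme --- you give no argument that the inductively constructed sequences on the nilpotent layers can be chosen invariant, up to monotiling, under conjugation by the next layer. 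In short, your write-up is more scrupulous than the paper's about where the difficulty lies, but both arguments are incomplete at the same step.
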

\begin{proof}
 By hypothesis $G=Z_\alpha(G)$ for some countable ordinal $\alpha<\omega^2$. We prove by induction that $Z_\kappa(G)$ is locally monotileable for every ordinal $\kappa<\omega^2$. Indeed, $Z_0(G)=\{1\}$ is locally monotileable. Moreover, if $\kappa=\beta+1$ for some ordinal $\beta$, since $Z_\beta(G)$ is locally monotileable by the inductive hypothesis and $Z_\kappa(G)/Z_\beta(G)$ is locally monotileable by Corollary~\ref{abelian}, $Z_\kappa(G)$ is locally monotileable by Corollary~\ref{ticor3}. If $\kappa$ is a limit ordinal, then $\kappa=m\omega$ for some $m\in\N_+$, and so $Z_\kappa(G)$ is increasing union of its subgroups $\{Z_{(m-1)\omega+n}(G)\colon n\in\N\}$. By the inductive hypothesis and by Corollary~\ref{abelian}, those subgroups satisfy the hypotheses of Theorem~\ref{tileable}, so $Z_\kappa(G)$ is \tileable.
\end{proof}

 Corollary~\ref{hypercentral} and Proposition~\ref{ticor1} give Theorem~\ref{vhy}.



\section{Applications of the Extension Theorem}

\subsection{Extensions of $\Q$ by $\Z$}

The following technical lemma is needed in the proof of Theorem~\ref{expro1}.

\begin{lemma}\label{newle}
Let $H, K$ be countable groups, $\phi\colon K\rightarrow \Aut(H)$ a group homomorphism and $(E_n)_{n\in\N}$ a \tileable\ sequence of $H$. Consider also a \tileable\ sequence $(F_n)_{n\in\N}$ of $K$ and let $(K_n)_{n\in\N}$ be an associated tiling sequence. Let $(\bar F_n)_{n\in\N}$ be the sequence in $\Pf(H\times K)$ given by $\bar F_n= E_n\times F_n$. If for all $n\in\N$ and $k\in K_{n+1}$, $\phi(k)(E_n)$ is a monotile of $E_{n+1}$,
then $(\bar F_n)_{n\in\N}$ is a \tileable\ sequence of $H\rtimes_{\phi}K$.
\end{lemma}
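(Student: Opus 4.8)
The plan is to verify directly the two defining conditions of a \tileable\ sequence for $(\bar F_n)_{n\in\N}$ inside the group $G=H\rtimes_{\phi}K$, whose underlying set is $H\times K$ with multiplication $(h_1,k_1)(h_2,k_2)=(h_1\,\phi(k_1)(h_2),\,k_1k_2)$. First I would note that, since $(E_n)_{n\in\N}$ and $(F_n)_{n\in\N}$ are \tileable, one has $E_0=\{1_H\}$ and $F_0=\{1_K\}$, so that $\bar F_0=\{1_H\}\times\{1_K\}=\{1_G\}$. It then remains to show that $\bar F_n$ is a monotile of $\bar F_{n+1}$ for every $n\in\N$.

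The key step, and the only genuine idea in the argument, is the description of a left translate of the product set $\bar F_n=E_n\times F_n$. For $(c,k)\in G$ and $(h,f)\in E_n\times F_n$ one computes $(c,k)(h,f)=(c\,\phi(k)(h),\,kf)$, whence
\[(c,k)(E_n\times F_n)=\bigl(c\,\phi(k)(E_n)\bigr)\times\bigl(kF_n\bigr).\]
This twisted-product formula is exactly the reason why the hypothesis is stated for the monotiles $\phi(k)(E_n)$ rather than for $E_n$ itself.

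With this in hand I would assemble the tiling set. Since $(K_n)_{n\in\N}$ is the tiling sequence of $(F_n)_{n\in\N}$, we have $F_{n+1}=\bigsqcup_{k\in K_{n+1}}kF_n$, and by hypothesis, for each $k\in K_{n+1}$ there is $C_{n,k}\subseteq H$ with $E_{n+1}=\bigsqcup_{c\in C_{n,k}}c\,\phi(k)(E_n)$. Setting $\bar C=\{(c,k)\colon k\in K_{n+1},\ c\in C_{n,k}\}$ and using the displayed formula, the union of translates $(c,k)\bar F_n$ over $c\in C_{n,k}$ yields $E_{n+1}\times(kF_n)$ for each fixed $k$, and the further union over $k\in K_{n+1}$ yields $E_{n+1}\times F_{n+1}=\bar F_{n+1}$.

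The remaining point, which I expect to be the only place requiring a little care rather than a real obstacle, is disjointness of the family $\{(c,k)\bar F_n\colon(c,k)\in\bar C\}$; this I would check along the two indices separately: translates with distinct $k$ are disjoint because their second coordinates lie in the disjoint blocks $kF_n$ of $F_{n+1}$, while for a fixed $k$ translates with distinct $c$ are disjoint because their first coordinates lie in the disjoint blocks $c\,\phi(k)(E_n)$ of $E_{n+1}$. This gives that $\bar C$ realizes $\bar F_n$ as a monotile of $\bar F_{n+1}$, completing the verification that $(\bar F_n)_{n\in\N}$ is a \tileable\ sequence of $G$.
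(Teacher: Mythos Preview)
Your proof is correct and follows essentially the same approach as the paper: both construct the tiling set for $\bar F_{n+1}$ as $\bigsqcup_{k\in K_{n+1}} C_{n,k}\times\{k\}$ (the paper writes $\tilde E_{n+1,k}$ for your $C_{n,k}$) and use the identity $(c,k)(E_n\times F_n)=(c\,\phi(k)(E_n))\times(kF_n)$ to verify $\bar F_{n+1}=\bigsqcup_{(c,k)}(c,k)\bar F_n$. Your write-up is in fact more explicit, spelling out $\bar F_0=\{1_G\}$, the translate formula, and the disjointness check, all of which the paper leaves implicit.
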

\begin{proof}
Fix $n\in\N$.
By hypothesis $\phi(k)(E_{{n}})$ is a monotile of $E_{n+1}$ for all $k\in K_{n+1}$. Therefore for all $k\in K_{n+1}$, there is $\tilde E_{n+1,k}$ such that 
$E_{{n+1}}=\bigsqcup_{\tilde e\in\tilde E_{n+1,k}}\tilde e\phi(k)(E_{n})$. 
Consider 
$\bar K_{n+1}=\bigsqcup_{k\in K_{n+1}} \tilde E_{n+1,k}\times \{k\}.$
Then 
\[\bar F_{n+1}=E_{{n+1}}\times F_{n+1}=\bigsqcup_{k\in K_{n+1}}E_{{n+1}}\times (kF_n)=\bigsqcup_{\bar k\in\bar K_{n+1}} \bar k(E_{n}\times F_n)=\bigsqcup_{\bar k\in \bar K_{n+1}} \bar k\bar F_n.\qedhere\]
\end{proof}

\begin{theorem}\label{expro1}
Let $H$ be a countable group and $K$ a \tileable\ finitely generated group with a symmetric generating set $X=\{f_1,\ldots,f_m\}$. Consider a group homomorphism $\phi\colon K\rightarrow \Aut(H)$ and let $\tilde X=\{\mathrm{id},\phi(f_1),\ldots,\phi(f_m)\}$. If $H$ is $\tilde X$-monotileable then the group $G= H\rtimes_{\phi}K$ is \tileable.
\end{theorem}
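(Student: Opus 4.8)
The plan is to realize $G=H\rtimes_\phi K$ as an extension with a homomorphic section and then feed the resulting product \Folner sequence into Lemma~\ref{newle}, using Lemma~\ref{exle1} to turn the finite-generation hypothesis into the index-shifted monotile relations that Lemma~\ref{newle} demands. Let $\pi\colon G\to K$ be the projection with kernel $H$, and take the homomorphic section $\sigma\colon K\to G$, $\sigma(k)=(1,k)$, which satisfies $\sigma(1_K)=1_G$; since $(e,1)(1,f)=(e,f)$, one has $E\sigma(F)=E\times F$ as subsets of $H\times K$ for all $E\subseteq H$ and $F\subseteq K$. Fix a $\tilde X$-monotileable right \Folner sequence $(E_n)_{n\in\N}$ of $H$. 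Since $K$ is a \tileable\ group, by Proposition~\ref{lm=cm} it is congruent monotileable, so I may choose $(F_n)_{n\in\N}$ to be a congruent right \Folner sequence of $K$, with tiling sequence $(K_n)_{n\in\N}$; in particular $1_K\in F_n$ for all $n$, as required by Theorem~\ref{teofolner}.

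First I would apply Theorem~\ref{teofolner} to $\pi$, $\sigma$, $(E_n)_{n\in\N}$ and $(F_n)_{n\in\N}$, obtaining increasing sequences $(m_n)_{n\in\N}$ and $(k_n)_{n\in\N}$ for which $E_{m_n}\sigma(F_{k_n})=E_{m_n}\times F_{k_n}$ is a right \Folner sequence of $G$. The subsequence $(F_{k_n})_{n\in\N}$ is \tileable\ by Lemma~\ref{tilesub}; let $(K'_n)_{n\in\N}$ be its tiling sequence, so that $F_{k_{n+1}}=\bigsqcup_{k\in K'_{n+1}}kF_{k_n}$, and set $a_n=\max\{\ell_X(k)\colon k\in K'_{n+1}\}$. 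Then I would invoke Theorem~\ref{teofolner}(3) with this $(a_n)_{n\in\N}$ to pass to a further thinning $(m_{h_n})_{n\in\N}$ of the $H$-side, with $h_0=0$, such that $m_{h_{n+1}}>m_{h_n}+a_n$ for all $n$ and such that $F^{*}_n:=E_{m_{h_n}}\sigma(F_{k_n})=E_{m_{h_n}}\times F_{k_n}$ is still a right \Folner sequence of $G$ (note $F^{*}_0=\{1_G\}$).

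It remains to show that $(F^{*}_n)_{n\in\N}$ is \tileable, for which I would apply Lemma~\ref{newle} to the $E$-sequence $(E_{m_{h_n}})_{n\in\N}$ and the $F$-sequence $(F_{k_n})_{n\in\N}$ with tiling sequence $(K'_n)_{n\in\N}$; both are \tileable\ subsequences by Lemma~\ref{tilesub}, and $(E_{m_{h_n}})_{n\in\N}$ is even $\tilde X$-monotileable by Lemma~\ref{stilesub}. The hypothesis to verify is that $\phi(k)(E_{m_{h_n}})$ is a monotile of $E_{m_{h_{n+1}}}$ for every $n$ and every $k\in K'_{n+1}$. Applying Lemma~\ref{exle1} to the original sequence $(E_n)_{n\in\N}$ shows that $\phi(k)(E_{m_{h_n}})$ is a monotile of $E_{m_{h_n}+\ell_X(k)}$; since $m_{h_{n+1}}>m_{h_n}+a_n\ge m_{h_n}+\ell_X(k)$, local monotileability of $(E_n)_{n\in\N}$ makes $E_{m_{h_n}+\ell_X(k)}$ a monotile of $E_{m_{h_{n+1}}}$, and transitivity of the monotile relation (Lemma~\ref{disun}) then yields that $\phi(k)(E_{m_{h_n}})$ is a monotile of $E_{m_{h_{n+1}}}$, as needed. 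Lemma~\ref{newle} now gives that $(F^{*}_n)_{n\in\N}$ is \tileable; being also a right \Folner sequence of $G$, it witnesses that $G$ is \tileable.

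The main obstacle I anticipate is the coordination of the two thinnings. The $K$-side sequence $(k_n)$ determines, through its tiling sets $K'_{n+1}$, exactly how fast the $H$-side indices must grow, namely by at least $\max_{k\in K'_{n+1}}\ell_X(k)$ per step, so that the index shift inherent in Lemma~\ref{exle1} can be absorbed into a genuine monotile relation between consecutive terms of the $H$-sequence; matching this growth against the need to keep $E_{m_{h_n}}\times F_{k_n}$ \Folner inside the non-abelian product $H\rtimes_\phi K$ is precisely what part (3) of Theorem~\ref{teofolner} supplies. A secondary subtlety is that the monotile relations produced by Lemma~\ref{exle1} need not respect inclusions, so one must use transitivity of monotileability in its general (inclusion-free) form rather than only the nested version literally displayed in Lemma~\ref{disun}.
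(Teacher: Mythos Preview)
Your proposal is correct and follows essentially the same route as the paper: apply Theorem~\ref{teofolner} to produce a product-form F\o lner sequence, use part~(3) of that theorem together with the word-length bounds $a_n$ to thin the $H$-side so that Lemma~\ref{exle1} and transitivity of monotiles yield the hypothesis of Lemma~\ref{newle}, and conclude. Your invocation of Proposition~\ref{lm=cm} to secure $1_K\in F_n$ and your remark that the transitivity step needs the inclusion-free form of Lemma~\ref{disun} are both points the paper leaves implicit.
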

\begin{proof}
Let $(E_n)_{n\in\N}$ be an $\tilde X$-monotileable right F\o lner sequence of $H$ and $(F_n)_{n\in\N}$ a \tileable\ right F\o lner sequence of $K$.
By Theorem~\ref{teofolner} there exist two sequences $(m_n)_{n\in\N}$ and $(k_n)_{n\in\N}$ in $\N$ such that the sequence $(\bar F_n)_{n\in\N}$, given by $\bar F_n= E_{m_n}\times F_n$ for every $n\in\N$, is a right F\o lner sequence of $H\rtimes_{\phi} K$.
	
Consider a tiling sequence $(K_n)_{n\in\N}$ associated to $(F_{k_n})_{n\in\N}$.
For all $n\in\N$ let $a_n=\max\{\ell_S(k): k\in K_{n+1}\}$.
By Theorem~\ref{teofolner} there exists a strictly increasing sequence $(h_n)_{n\in\N}$ in $\N$ such that:
\begin{enumerate}[(1)]
\item $m_{h_{n+1}}>m_{h_n}+a_n$ for every $n\in\N$;
\item the sequence $(F^*_n)_{n\in\N}$, given by $F^*_n= E_{m_{h_n}}\times F_n$ for every $n\in\N$, is a right F\o lner sequence of $H\rtimes_{\phi} K$.
\end{enumerate}	
By Lemma~\ref{stilesub} the subsequence $(E_{m_{h_n}})_{n\in\N}$ is still an $\tilde X$-monotileable sequence of $H$. 
By Lemma~\ref{exle1} we know that $\phi(k)(E_{m_{h_n}})$ is a monotile of $E_{m_{h_n}+\ell_S(k)}$ for all $k\in K_{n+1}$. On the other hand, since $m_{h_n}+\ell_S(k)\leq m_{h_{n+1}}$, by Lemma~\ref{tilesub} $E_{m_{h_n}+\ell_S(k)}$ is a monotile of $E_{m_{h_{n+1}}}$. Finally by Lemma~\ref{disun} we have that $\phi(k)(E_{m_{h_n}})$ is a monotile of $E_{m_{h_{n+1}}}$ for all $k\in K_{n+1}$.
	
The sequences $(E_{m_{h_n}})_{n\in\N}$, $(F_{k_n})_{n\in\N}$ and $(F^*_n)_{n\in\N}$ satisfy the hypothesis of Lemma~\ref{newle}, so we apply it to conclude.
\end{proof}		

The following is an immediate consequence of Theorem~\ref{expro1} (take $X=\{1,-1\}$ as a finite generating set of $\Z$).

\begin{corollary}\label{exco1}
Let $H$ be a countable \tileable\  group and $\phi\colon H\rightarrow H$ an automorphism. If $H$ is $\phi$-monotileable, then $H\rtimes_{\phi}\Z$ is \tileable.
\end{corollary}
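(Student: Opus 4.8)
The plan is to obtain the corollary as a direct specialization of Theorem~\ref{expro1} to the case $K=\Z$. First I would check that $\Z$ satisfies the hypotheses imposed on $K$ there: it is finitely generated by the symmetric set $X=\{1,-1\}$, and it is \tileable, for instance because $\M$ contains all finitely generated abelian groups by Corollary~\ref{monofingen}. Second, I would promote the single automorphism $\phi$ to a group homomorphism $\Phi\colon\Z\to\Aut(H)$ by setting $\Phi(n)=\phi^n$; this is well defined and a homomorphism precisely because $\phi$ is an automorphism, and it satisfies $\Phi(1)=\phi$ and $\Phi(-1)=\phi^{-1}$.

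With these identifications the auxiliary set $\tilde X$ of Theorem~\ref{expro1} becomes $\tilde X=\{\mathrm{id},\Phi(1),\Phi(-1)\}=\{\mathrm{id},\phi,\phi^{-1}\}$. By the convention fixed just after the definition of $X$-monotileability, the hypothesis ``$H$ is $\phi$-monotileable'' means exactly that $H$ is $\tilde X$-monotileable for this $\tilde X$. Likewise the semidirect product $H\rtimes_\Phi\Z$ built from $\Phi$ is, by construction, the same group as $H\rtimes_\phi\Z$ appearing in the statement, since the $\Z$-action on $H$ is the one generated by $\phi$.

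Having matched every hypothesis, I would conclude by applying Theorem~\ref{expro1} verbatim: $H$ is countable, $K=\Z$ is \tileable\ and finitely generated with symmetric generating set $\{1,-1\}$, and $H$ is $\tilde X$-monotileable, hence $G=H\rtimes_\phi\Z$ is \tileable. I do not expect any genuine obstacle here: the whole argument is the observation that the single-automorphism hypothesis of the corollary unpacks into the $\tilde X$-monotileability hypothesis of the theorem once $\phi$ is replaced by the cyclic homomorphism $n\mapsto\phi^n$. The only points requiring care are the routine bookkeeping that $\{\mathrm{id},\phi,\phi^{-1}\}$ is precisely the $\tilde X$ of the theorem and that $\Z$ qualifies as a \tileable\ finitely generated group.
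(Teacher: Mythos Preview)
Your proposal is correct and follows exactly the same approach as the paper, which simply remarks that the corollary is an immediate consequence of Theorem~\ref{expro1} upon taking $X=\{1,-1\}$ as a symmetric generating set of $\Z$. Your write-up merely unpacks this in full detail, verifying each hypothesis explicitly.
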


Now we prove Theorem~\ref{tipro3} stating that for every automorphism $\phi$ of $\Q$, the group $\Q \rtimes_{\phi}\Z$ is \tileable. First we need the following folklore lemma; recall that an a-sequence $(a_n)_{n\in\N}$ is \emph{geometric} if for every $k\in\N$ there exists $\bar n\in\N$ with $k\mid a_{\bar n}$.

\begin{lemma}\label{geotil}
Let $(a_n)_{n\in\N}$ be a geometric a-sequence.
\begin{enumerate}[(a)]
\item For every $q\in\Q$ there is a minimum $n_q\in \N$ such that $q\in\langle\frac{1}{a_{n_q}}\rangle$.
\item For $q$ and $n_q$ as in item (a), there exist unique $k_1,\dots,k_{n_q}\in\N$ and $k_0\in\Z$ such that $q=\sum_{i=0}^{n_q}\frac{k_i}{a_i}$ and $0\leq k_i<q_i=a_i/a_{i-1}$ for all $i\in\{1,\dots,n_q\}$.
\end{enumerate}
\end{lemma}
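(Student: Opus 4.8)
The plan is to handle the two parts separately, using the geometric hypothesis only for the existence step in (a) and a mixed-radix (Horner-type) expansion of an integer for (b). Throughout, recall that $q_i=a_i/a_{i-1}$ is an integer with $q_i\geq 2$ (since the a-sequence is strictly increasing and $a_{i-1}\mid a_i$), and that $a_i=\prod_{j=1}^i q_j$ because $a_0=1$.

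For part (a), I would first record the monotonicity $\langle\frac{1}{a_n}\rangle\subseteq\langle\frac{1}{a_{n+1}}\rangle$: since $a_n\mid a_{n+1}$, we have $\frac{1}{a_n}=\frac{a_{n+1}/a_n}{a_{n+1}}\in\langle\frac{1}{a_{n+1}}\rangle$. Hence $N_q=\{n\in\N:q\in\langle\frac{1}{a_n}\rangle\}$ is an up-set in $\N$. To see it is non-empty, write $q=p/s$ with $p\in\Z$ and $s\in\N_+$; by the geometric hypothesis there is $\bar n$ with $s\mid a_{\bar n}$, so $q=\frac{p(a_{\bar n}/s)}{a_{\bar n}}\in\langle\frac{1}{a_{\bar n}}\rangle$ and $\bar n\in N_q$. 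Being a non-empty subset of $\N$, $N_q$ has a minimum $n_q$ by well-ordering, which is exactly (a).

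For part (b), set $n=n_q$ and use $q\in\langle\frac{1}{a_n}\rangle$ to fix the unique integer $M$ with $q=M/a_n$. Multiplying the desired identity $q=\sum_{i=0}^n\frac{k_i}{a_i}$ by $a_n$ and using $a_n/a_i=\prod_{j=i+1}^n q_j$ turns it into the single integer equation
\begin{equation*}
M = k_n + q_n\bigl(k_{n-1} + q_{n-1}(k_{n-2} + \cdots + q_2(k_1 + q_1 k_0)\cdots)\bigr),
\end{equation*}
that is, a mixed-radix expansion of $M$ with bases $q_n,q_{n-1},\dots,q_1$ and a free leading coefficient $k_0$. For existence I would run the evident division algorithm: put $M^{(0)}=M$ and, for $i=n,n-1,\dots,1$, let $k_i$ be the remainder and $M^{(n-i+1)}$ the quotient of the Euclidean division of $M^{(n-i)}$ by $q_i$, finally setting $k_0=M^{(n)}\in\Z$. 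By construction $0\leq k_i<q_i$ for $i\geq 1$, and unwinding the nested divisions recovers the displayed Horner identity, hence the original sum.

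For uniqueness I would reduce modulo $q_n$: every summand on the right except $k_n$ is divisible by $q_n$, so $k_n\equiv M\pmod{q_n}$, and the constraint $0\leq k_n<q_n$ determines $k_n$ uniquely as the remainder; subtracting $k_n$ and dividing by $q_n$ yields the same problem with $n$ replaced by $n-1$ and $M$ by $(M-k_n)/q_n$, so an induction on $n$ (base case $n=0$, where $k_0=M$) finishes the argument. I expect no genuine obstacle here: the only delicate points are the index bookkeeping in the nested Horner form and confirming $q_i\geq 2$ so the digit ranges are well-defined. Notably, minimality of $n_q$ is \emph{not} needed for uniqueness at the fixed length $n_q$; it is, however, precisely what forces $k_{n_q}\neq 0$ when $n_q\geq 1$, since $q\in\langle\frac{1}{a_{n_q-1}}\rangle$ would be equivalent to $q_{n_q}\mid M$, i.e.\ $k_{n_q}=0$.
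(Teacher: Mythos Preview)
Your argument is correct and follows essentially the same route as the paper: part (b) in both cases is the mixed-radix expansion obtained by iterated Euclidean division (the paper extracts $k_{n_q}$ as the remainder of $s$ modulo $q_{n_q}$ and inducts on $n_q$, which is exactly your loop unrolled). For part (a) the paper computes $n_q$ explicitly from the prime factorisation of the reduced denominator, whereas you simply invoke well-ordering on the non-empty up-set $N_q$; both are fine, and your version is arguably cleaner.
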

\begin{proof} 
(a) Fix $q=\frac{s}{t}\in\Q$, where $(s,t)=1$ and $0<t=p_1^{k_1}\dots p_m^{k_m}$. Since $(a_n)_{n\in\N}$ is a geometric a-sequence, 
for every $i = 1,2,\ldots, m$ there exists a minimal $n_i\in\N$ such that $p_i^{k_i}\mid a_{n_i}$. Let $n_q=\max\{n_i: 1\leq i\leq m\}$, then $q\in\langle 1/a_{n_q}\rangle$ and $n_q$ is  minimal with this property.
		
(b) We proceed by induction on $n_q$. If $n_q=0$ then $q\in\Z$ and the statement is known to be true. Fix $n\in\N$ and suppose that we already proved the statement for all the $q\in\Q$ such that $n_q\leq n$. Fix $q=\frac{s}{a_{n+1}}\in \langle\frac{1}{a_{n+1}}\rangle$ such that $n_q=n+1$, then there are unique $0\leq k_{n+1}<q_{n+1}$ and $s'\in\Z$ such that $s=k_{n+1}+s'q_{n+1}$ and so, by inductive hypothesis,
\begin{equation*}\label{tieq1.9}
q=\frac{k_{n+1}}{a_{n+1}}+\frac{s'}{a_n}=\frac{k_{n+1}}{a_{n+1}}+\sum_{i=0}^{n}\frac{k_i}{a_i}=\sum_{i=0}^{n+1}\frac{k_i}{a_i}.\qedhere
\end{equation*}
\end{proof}

In order to prove Theorem~\ref{tipro3}, we start with the following fact.

\begin{claim}\label{monoqc}
Let $(a_n)_{n\in\N}$ be a geometric a-sequence and $(c_n)_{n\in\N}$ an a-sequence, then the sequence $(F_n)_{n\in\N}$, with
$F_0=\{0\}$ and $F_n=\langle\frac{1}{a_n}\rangle\cap[0,c_n)$ for every $n\in\N_+$, is F\o lner sequence of $\Q$.
\end{claim}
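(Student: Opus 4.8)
The plan is to reduce the whole statement to counting integers in an interval. First I would identify $F_n$ with a block of consecutive integers: multiplication by $a_n$ carries $F_n=\langle\frac1{a_n}\rangle\cap[0,c_n)$ bijectively onto $\{0,1,\dots,a_nc_n-1\}$, since $\frac{k}{a_n}\in[0,c_n)$ if and only if $0\le k<a_nc_n$. As $a_n,c_n\in\N_+$, this already gives $\card{F_n}=a_nc_n$ (and in particular each $F_n$ is finite and contains $0$, consistently with $F_0=\{0\}$).

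Next I would fix an arbitrary $q\in\Q$ and show $\card{(F_n+q)\setminus F_n}/\card{F_n}\to 0$. By Lemma~\ref{geotil}(a) there is $n_q\in\N$ with $q\in\langle\frac1{a_{n_q}}\rangle$, and since $a_{n_q}\mid a_n$ for all $n\ge n_q$, we get $q\in\langle\frac1{a_n}\rangle$ for every such $n$; equivalently $m_n:=qa_n\in\Z$. Under the same bijection $x\mapsto a_nx$, the translate $F_n+q$ corresponds to the shifted integer interval $\{m_n,m_n+1,\dots,m_n+a_nc_n-1\}$, which has the same length $a_nc_n$ as $\{0,\dots,a_nc_n-1\}$. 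Because the bijection is injective on $\frac1{a_n}\Z$, it respects set differences, so $(F_n+q)\setminus F_n$ is carried exactly to the integers in the shifted interval that miss $\{0,\dots,a_nc_n-1\}$; the cardinality of this overflow is $\min(\abs{m_n},a_nc_n)$, hence at most $\abs{m_n}=\abs{q}\,a_n$.

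Combining the two counts yields, for all $n\ge n_q$,
\[
\frac{\card{(F_n+q)\setminus F_n}}{\card{F_n}}\le\frac{\abs{q}\,a_n}{a_nc_n}=\frac{\abs{q}}{c_n}.
\]
Since $(c_n)_{n\in\N}$ is a strictly increasing sequence of natural numbers, $c_n\to\infty$, so the right-hand side tends to $0$; as $q$ was arbitrary, $(F_n)_{n\in\N}$ is a \Folner sequence of $\Q$ (no ``right'' qualifier is needed, $\Q$ being commutative). The argument is essentially routine; the only points requiring care are the passage to the integer model and the observation that for $n\ge n_q$ translation by $q$ is a genuine shift of an integer block of length $a_nc_n$, so the overflow grows only linearly in $a_n$ while $\card{F_n}$ grows like $a_nc_n$. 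The ``hard part'' is thus merely conceptual rather than computational: it is precisely the extra growth of the factor $c_n$ that forces the ratio to vanish, and one must keep the harmless bound $\abs{m_n}$ (rather than an exact count) since $m_n$ depends on $n$ and may exceed $a_nc_n$.
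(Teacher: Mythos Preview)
Your argument is correct. The identification of $F_n$ with the integer block $\{0,\dots,a_nc_n-1\}$ via multiplication by $a_n$ is valid, and for $n\ge n_q$ translation by $q$ becomes an integer shift by $m_n=qa_n$, so the overflow count $\card{(F_n+q)\setminus F_n}\le\abs{m_n}=\abs{q}\,a_n$ is exactly right. Dividing by $\card{F_n}=a_nc_n$ gives the bound $\abs{q}/c_n\to 0$.

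Your route differs from the paper's in execution, though the underlying idea is the same. The paper works directly in $\Q$: it first enlarges the threshold to some $\bar n$ with both $q\in\langle 1/a_{\bar n}\rangle$ and $\abs{q}<c_{\bar n}$, then decomposes $F_{\bar n+k}$ into $c_{\bar n+k}/c_{\bar n}$ consecutive blocks of ``width'' $c_{\bar n}$, computes the overflow of the shift by $c_{\bar n}$ exactly (one block), and finally uses $\abs{q}<c_{\bar n}$ to argue that the overflow of the shift by $q$ is contained in that of the shift by $c_{\bar n}$; this yields the bound $c_{\bar n}/c_{\bar n+k}$, and the cases $q\ge 0$ and $q<0$ are treated separately. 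Your integer-model argument collapses all of this into a single line and handles both signs uniformly via $\abs{q}$, at the cost of a slightly less explicit constant ($\abs{q}$ versus $c_{\bar n}$). Either way the crucial point is that the numerator grows like $a_n$ while $\card{F_n}$ grows like $a_nc_n$, so the extra factor $c_n\to\infty$ does the work; your presentation makes this transparent.
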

\begin{proof}
Fix $0\leq q=\frac{s}{t}\in\Q$. There exists $n'$ such that $q<c_{n'}$ and by Lemma~\ref{geotil} there is $n_q$ such that $\frac{1}{t}\in F_{n_q}$. Let $\bar n=\max\{n_q, n'\}$, then $q\in F_{\bar n}$.
Note that $F_{\bar n+k}$ can be covered by $c_{\bar n+k}/c_{\bar n}$ disjoint sets of the form 
\begin{equation*}
F_{\bar n+k}\cap[lc_{\bar n},(l+1)c_{\bar n})=\left\langle\frac{1}{a_{\bar n+k}}\right\rangle\cap[lc_{\bar n},(l+1)c_{\bar n}),
\end{equation*}
where $l\in\{0,c_{\bar n+k}/c_{\bar n}-1\}$. In each of these sets there are exactly $\left |F_{\bar n+k}\right|c_{\bar n}/c_{\bar n+k}$ elements. If we translate $F_{\bar n+k}$ by $c_{\bar n}$, all these sets, except the last one, are shifted exactly in the next one. So,
\begin{equation*}
(c_{\bar n}+F_{\bar n+k})\setminus F_{\bar n+k}=\left(\left\langle\frac{1}{a_{\bar n+k}}\right\rangle\cap[(c_{k+\bar n}-c_{\bar n},c_{k+\bar n})\right)+c_{\bar n},
\end{equation*}
and then 
\begin{equation}\label{eqqq2}
\left|(c_{\bar n}+F_{\bar n+k})\setminus F_{\bar n+k}\right|=\frac{\left|F_{\bar n+k}\right|c_{\bar n}}{c_{\bar n+k}}.
\end{equation}
Moreover $q<c_{\bar n}$, so $q\in\langle\frac{1}{a_{\bar n+k}}\rangle$ for all $k\in\N$. Thus, $(q+F_{\bar n+k})\cup F_{\bar n+k}\leq (c_{\bar n}+F_{\bar n+k})\cup F_{\bar n+k}$,
and this implies
\begin{equation}\label{eqqq1}
(q+F_{\bar n+k})\setminus F_{\bar n+k}\subseteq c_{\bar n}+F_{\bar n+k}\setminus F_{\bar n+k}.
\end{equation}
By \eqref{eqqq2} and \eqref{eqqq1} we obtain that
\begin{equation*}
\left|(q+F_{\bar n+k})\setminus F_{\bar n+k}\right|\leq \left|(c_{\bar n}+F_{\bar n+k})\setminus F_{\bar n+k}\right|=\frac{\left |F_{\bar n+k}\right|c_{\bar n}}{c_{\bar n+k}},
\end{equation*}
and so
\begin{equation*}
\lim_{n\to\infty}\frac{\left|q+F_{n}\setminus F_{n}\right|}{\left|F_n\right|}=\lim_{k\to\infty}\frac{\left|q+F_{\bar n+k}\setminus F_{\bar n+k}\right|}{\left|F_{\bar n+k}\right|}\leq \lim_{k\to\infty}\frac{c_{\bar n}}{c_{\bar n+k}}=0.
\end{equation*}
If we consider instead a negative $q\in \Q$, we can proceed in a similar way. We find $\bar n\in\N$ as before but using $-q$ and then we have
\begin{equation*}
(F_{\bar n+k}-c_{\bar n})\setminus F_{\bar n+k}=\left(\left\langle\frac{1}{a_{\bar n+k}}\right\rangle\cap[0,c_{\bar n})\right)-c_{\bar n}.
\end{equation*}
As before we conclude that
\begin{equation*}
\left|(q+F_{\bar n+k})\setminus F_{\bar n+k}\right|\leq \left|(F_{\bar n+k}-c_{\bar n})\setminus F_{\bar n+k}\right|=\frac{\left |F_{\bar n+k}\right|c_{\bar n}}{c_{\bar n+k}},
\end{equation*}
and then also in this case
\begin{equation*}
\lim_{n\to\infty}\frac{\left|(q+F_{n})\setminus F_{n}\right|}{\left|F_n\right|}\leq\lim_{k\to\infty}\frac{c_{\bar n}}{c_{\bar n+k}}=0.
\end{equation*}
Therefore $(F_n)_{n\in\N}$ is a F\o lner sequence of $\Q$.
\end{proof}

One can verify that the above F\o lner sequence of $\Q$ is also locally monotileable (see \cite{Flavio}). This is written in details in the next proof in a particular case (but in a more general setting), where one should take $q=1$.

\begin{proof}[\bf Proof of Theorem~\ref{tipro3}]
Each automorphism $\phi$ of the group $(\Q,+)$ is of the form $\phi_q\colon x\mapsto qx$ for some $q\in\Q\setminus\{0\}$. 
Fix $q=a/b\in\Q\setminus\{0\}$, with $(a,b)=1$, $a\in\Z\setminus\{0\}$ and $b\in\N\setminus\{0\}$.
Let $(E_n)_{n\in\N}$ with, for every $n\in\N$, 
$$E_n=\left\langle \frac{1}{|a|^nb^n(n!)}\right\rangle\cap[0,2^n|a|^nb^n).$$
First $(E_n)_{n\in\N}$ is a F\o lner sequence of $\Q$, since $(|a|^nb^n(n!))_{n\in\N}$ is a geometric a-sequence and $(2^n|a|^nb^n)_{n\in\N}$ is an a-sequence, and so Claim~\ref{monoqc} applies.
	
Now we verify that $(E_n)_{n\in\N}$ is $\phi_q$-monotileable, To this end, six $n\in\N$. If $a=|a|>0$, then
$$\phi_q(E_n)=qE_n=\left\langle \frac{1}{b^{n+1}|a|^{n-1}(n!)}\right\rangle\cap[0,2^n|a|^{n+1}b^{n-1}).$$
We note that
\begin{equation}\label{exeq1}
E_{n+1}=\bigsqcup_{j=0}^{2b^2}2^n|a|^{n+1}b^{n-1}j+\left(E_{n+1}\cap[0,2^n|a|^{n+1}b^{n-1})\right).
\end{equation}
Moreover,
\begin{equation}\label{exeq2}
E_{n+1}\cap[0,2^n|a|^{n+1}b^{n-1})=\bigsqcup_{i=0}^{|a|^2(n+1)-1}\frac{i}{|a|^{n+1}b^{n+1}(n+1)!}+\phi_q(E_n)
\end{equation}
Combining \eqref{exeq1} and \eqref{exeq2}, we obtain
\begin{equation}\label{exeq3}
E_{n+1}=\bigsqcup_{j=0}^{2b^2-1}\biggl( 2^n|a|^{n+1}b^{n-1}j+\biggl(\bigsqcup_{i=0}^{|a|^2(n+1)-1}\frac{i}{|a|^{n+1}b^{n+1}(n+1)!}+\phi_q(E_n)\biggr)\biggr).
\end{equation}
Define
\[\bar E^+_{n+1}=\biggl\{2^n|a|^{n+1}b^{n-1}j+\frac{i}{|a|^{n+1}b^{n+1}(n+1)!}: 0\leq i<|a|^2(n+1),\ 0\leq j<2b^2\biggr\}.\]
By Lemma~\ref{disun} and \eqref{exeq3},
\begin{equation}\label{exeq5}
E_{n+1}=\bigsqcup_{\bar e\in\bar E^+_{n+1}}\bar e+\phi_q(E_n),
\end{equation}
i.e., $\phi_q(E_n)$ is a monotile of $E_{n+1}$. Exactly in the same way we obtain that $\phi_q^{-1}(E_n)$ is a monotile of $E_{n+1}$.
	
If $a<0$, then 
$$\phi_q(E_n)=qE_n=\left\langle \frac{1}{|a|^{n-1}b^{n+1}(n!)}\right\rangle\cap(-2^n|a|^{n+1}b^{n-1},0]\quad\text{and}\quad
\phi_{-q}(E_n)=2^n|a|^{n+1}b^{n-1}-\frac{1}{|a|^{n-1}b^{n+1}(n!)}+\phi_q(E_n).$$
Since $a<0$, clearly $-q>0$, and so \eqref{exeq5} yields
\begin{equation}\label{exeq6}
E_{n+1}=\bigsqcup_{\bar e\in\bar E^+_{n+1}}\bar e+\phi_{-q}(E_n)=\bigsqcup_{\bar e\in\bar E^+_{n+1}}\left(\bar e+2^n|a|^{n+1}b^{n-1}-\frac{1}{|a|^{n-1}b^{n+1}(n!)}+\phi_q(E_n)\right).
\end{equation}
Define
$$\bar E^-_{n+1}=2^n|a|^{n+1}b^{n-1}-\frac{1}{|a|^{n-1}b^{n+1}(n!)}+\bar E^+_{n+1}.$$
By \eqref{exeq6} we have
$E_{n+1}=\bigsqcup_{\bar e\in\bar E^-_{n+1}}\bar e+\phi_q(E_n),$
and so $\phi_q(E_n)$ is a monotile of $E_{n+1}$. In the same way we find that $\phi_q^{-1}(E_n)$ is a monotile of $E_{n+1}$.
Hence, $(E_n)_{n\in\N}$ is $\phi_q$-monotileable. 

To conclude, apply Corollary~\ref{exco1}.
\end{proof}

Given a finitely generated subgroup $K$ of $\Aut(\Q)$, one can prove that $\Q \rtimes K$ is \tileable. Suppose that $K$ is generated by $\{\frac{a_1}{b_1},\ldots,\frac{a_m}{b_m}\}$. Consider the sequence $(E_n)_{n\in\N}$ given by
$$E_n=\left\langle \frac{1}{(|a_1\ldots a_m|b_1\ldots b_m)^n(n!)}\right\rangle\cap[0,(2^n|a_1\ldots a_m|^n(b_1\ldots b_m)^n).$$
Proceeding as in the proof of Theorem~\ref{tipro3} one could prove that $(E_n)_{n\in\N}$ is $K$-monotileable and then apply Theorem~\ref{expro1}. 

%
%

\subsection{Examples of locally monotileable groups that are not virtually nilpotent}\label{ex-sec}

Here we show that the general Extension Theorem can be proved for extensions
\begin{equation}\label{Christmas}
0 \rightarrow H \xrightarrow{\iota} G \xrightarrow{\pi} K \rightarrow 0
\end{equation}
when $H$ has a property a bit stronger than local finiteness. 

\begin{corollary}\label{new:corol}
Suppose that in the short exact sequence \eqref{Christmas} the group $H$ has the property that every finite subset of $H$ is contained in a finite characteristic subgroup of $H$. If $K$ is \tileable, then $G$ is \tileable\ as well. 
\end{corollary}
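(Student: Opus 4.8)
The plan is to apply Claim~\ref{monosequ} under its condition (a), by producing a \tileable\ right F\o lner sequence of $H$ that is simultaneously $\mathrm{Inn}(G)$-monotileable. First I would record that the hypothesis forces $H$ to be locally finite: any finite subset of $H$ lies in a finite characteristic subgroup, hence generates a finite subgroup. Since $H$ is countable, Proposition~\ref{monotor} gives that $H$ is \tileable, so both groups in \eqref{Christmas} are \tileable\ and the standing hypotheses of Claim~\ref{monosequ} are met once (a) is checked.

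Next, enumerating $H=\{h_n\colon n\in\N\}$ with $h_0=1$, I would build recursively an increasing exhaustive sequence $(E_n)_{n\in\N}$ of finite characteristic subgroups of $H$ with $E_0=\{1\}$: given $E_n$, the hypothesis provides a finite characteristic subgroup $E_{n+1}$ containing the finite set $E_n\cup\{h_0,\dots,h_{n+1}\}$, so that $E_n\leq E_{n+1}$ and $\bigcup_{n\in\N}E_n=H$. As in the proof of Proposition~\ref{monotor}, exhaustiveness yields that $(E_n)_{n\in\N}$ is a right F\o lner sequence of $H$, and since $E_n$ is a subgroup of the finite group $E_{n+1}$, a set of left coset representatives witnesses that $E_n$ is a monotile of $E_{n+1}$; together with $E_0=\{1\}$ this makes $(E_n)_{n\in\N}$ a \tileable\ F\o lner sequence.

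The key point, and the only place where the \emph{characteristic} (rather than merely finite) hypothesis is used, is the verification of condition (a). Here I would invoke the standard fact that a characteristic subgroup of a normal subgroup is normal in the ambient group: identifying $H$ with $\iota(H)\triangleleft G$, each inner automorphism $c_g$ of $G$ restricts to an automorphism of $H$, and since $E_n$ is characteristic in $H$ this restriction fixes $E_n$ setwise, i.e.\ $c_g(E_n)=E_n$. Consequently $\phi(E_n)=E_n$ is a monotile of $E_{n+1}$ for every $\phi\in\mathrm{Inn}(G)$, which is precisely $\mathrm{Inn}(G)$-monotileability of $(E_n)_{n\in\N}$. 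I expect this to be the crux of the argument, although it reduces to the two elementary observations above; the remaining steps are routine bookkeeping.

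Finally, fixing a \tileable\ F\o lner sequence $(F_n)_{n\in\N}$ of the \tileable\ group $K$ and a section $\sigma\colon K\to G$ with $\sigma(1_K)=1_G$, and using the sequence $(E_n)_{n\in\N}$ constructed above, Claim~\ref{monosequ} under condition (a) produces a \tileable\ right F\o lner sequence $(\bar F_n)_{n\in\N}=(E_{m_n}F^{\#}_n)_{n\in\N}$ of $G$. Hence $G$ is \tileable, as required.
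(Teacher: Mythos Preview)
Your proposal is correct and follows essentially the same approach as the paper: build an exhaustive increasing chain of finite characteristic subgroups of $H$, observe that this chain is a \tileable\ right F\o lner sequence of $H$ invariant under conjugation by elements of $G$, and then invoke Claim~\ref{monosequ} under condition~(a). Your write-up is simply more explicit about why characteristic-in-$H$ implies $\mathrm{Inn}(G)$-invariance and about the recursive construction of the $E_n$.
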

\begin{proof}
Clearly, $H$ is locally finite, hence \tileable\ by Proposition~\ref{monotor}. Moreover, there exists an exhausting increasing sequence $(E_n)_{n\in\N}$ of finite characteristic subgroups of $H$. In particular, $E_ n$ is invariant under conjugations in $G$. So $(E_n)_{n\in\N}$ is an  $\Aut(G)$-monotileable right F\o lner sequence of $G$. Therefore Claim~\ref{monosequ} gives that $G$ is \tileable.
\end{proof}

This corollary provides some new examples  of \tileable\  groups. 
 
\begin{example}
Take an infinite collection $\{S_n\colon n\in\N\}$ of simple finite groups such that for $n\ne m$ the only homomorphism $S_n \to S_m$ is the trivial one and $\exp(S_n)$ is not bounded. Let $H=\bigoplus_{n\in\N}S_n$ be as in Example~\ref{iii}(a). Then $H$ satisfies the hypotheses of Corollary~\ref{new:corol}. In particular, $H$ is locally finite, hence $H$ is \tileable\ by Proposition~\ref{monotor}. 

Let us see that for every residually finite countable abelian group $K$ one can define an appropriate faithful action $\theta$ of $K$ on $H$, such that  $H\rtimes_\theta K$ is \tileable. 
Indeed, $K$ can be identified with a subgroup of $P = \prod_{m\in\N_+}\Z(m)$.
Since $\Aut(H) = \prod_{n\in\N}\Aut(S_n)$, each $\Aut(S_n)$ contains a copy of $S_n$, hence the orders of these groups are not bounded.
Therefore, the product $\Aut(H)$ contains a subgroup isomorphic to $P$.
In particular, $P$ (hence, $\Aut(H)$ as well) contains an isomorphic copy of the group $K$, which gives rise to a faithful action $\theta$ of $K$ on $H$. 
Since the group $K$ is \tileable\ by Corollary~\ref{abelian}, we deduce that $G = H\rtimes_\theta K$ is \tileable by Corollary~\ref{new:corol}. 

For example, we can just pick a non-torsion element $\phi \in P$ and put $K = \langle \phi \rangle \cong \Z$; then $G = H\rtimes K\in \M$. 
Note that $G$ is neither locally finite, nor virtually solvable, nor residually solvable. 
\end{example}

\begin{example} 
We build here examples of \tileable\ amenable groups that are neither virtually nilpotent nor residually finite. 
\begin{enumerate}[(a)]
\item Suppose that in the short exact sequence \eqref{Christmas} the group $H$ is abelian and satisfies the descending chain condition on subgroups. If $K$ is \tileable, then $G$ is \tileable\ as well. 
Indeed, it is well known that the hypothesis on $H$ implies that $H\cong F \oplus \bigoplus_{i=1}^k\Z(p^\infty_{i})$, where $p_1, \ldots, p_k$ are not necessarily distinct primes (see \cite{Fuchs}). Then for every $n \in \N$ the subgroup $H[n]= \{h\in H\colon nh = 0\}$ is finite and fully invariant. Since every finite subset of $H$ is contained in some of the subgroups $H[n]$, the group $H$ satisfies the hypotheses of Corollary~\ref{new:corol}. Hence, $G$ is \tileable.

\item Take as in Example~\ref{iii}(b) $H =\bigoplus_{i=1}^k\Z(p^\infty_{i})$, where $p_1, \ldots, p_k$ are distinct primes. Then 
$K^*= \Aut(H) \cong \prod_{i=1}^kU(\mathbb J_{p_i})$,
where $U(\mathbb J_{p_i})$ is the group of units of the ring $\mathbb J_{p_i}$, and in particular $K^*= \Aut(H)$ is abelian. So any countable subgroup $K$ of $K^*$ is \tileable. 
Now consider the semidirect product $G = H \rtimes K$, where the action of $K$ is that induced by the natural one of $\Aut(H)$ on $H$. By item (a) $G$ is \tileable. 

\item Now take for simplicity $k=1$ and $p=p_1>2$ in item (b), so $H=\Z(p^\infty)$ and $K^* = U(\mathbb J_p)$. Since $p>2$, we can choose $K \not \subseteq 1 + p\mathbb J_p$, and so the natural action of $K$ on $H$ is fixed-point free. Therefore, $G = H \rtimes K$ is center-free, so $G$ is \tileable\ and $G$ is not nilpotent. 

To see that $G$ is not virtually nilpotent consider a finite index subgroup $N$ of $G$ contained in $K_1 = (1+ p^k \mathbb J_p)\cap K$. It is enough to show that $N$ is not nilpotent. Since $H$ is divisible, we deduce that $H\leq N$. Hence, $N = H \rtimes (N\cap K)$ and $m= [K:(N\cap K)]< \infty$, so $N \subseteq mK$. In particular, if $m=p^lm_1$, with $l\in \N$ and  $(m_1,p) = 1$, then there exists $ \xi \in N \cap (1+p^l\mathbb J_p)$. This implies that $Z(N) = Z_1(N) = \Z(p^l) \times \{1\}$. 
Arguing by induction one can see that $Z_s(N) = Z_1(N) = \Z(p^{ls}) \times \{1\}$. Hence, $N \ne Z_s(N)$ for every $s\in \N$. Thus $N$ is not nilpotent. Since $H = \bigcup _{s\in \N}  Z_s(N)$, $N$ is hypercentral. Therefore, $G$ is virtually hypercentral, yet not virtually nilpotent.  

Finally, it is easy to see that $G$ is not residually finite, as $H$ is a non-trivial divisible subgroup of $G$.
\end{enumerate}
\end{example}

\thebibliography{10}

{\footnotesize

\bibitem{AKM} R. Adler, A. Konheim, M. McAndrew, \emph{Topological entropy}, Trans. Amer. Math. Soc. 114 (1965) 309--319.

\bibitem{BG} M. Bertelson, M. Gromov, \emph{Dynamical Morse entropy. Modern dynamical systems and applications}, 27--44, Cambridge Univ. Press, Cambridge, 2004. 

\bibitem{B} R. Bowen, \emph{Entropy for group endomorphisms and homogeneous spaces}, Trans. Amer. Math. Soc. 153 (1971) 401--414.

\bibitem{CC} P. Cecchi, M. I. Cortez, \emph{Invariant measures for actions of congruent monotileable amenable groups}, Groups Geom. Dyn. (2019) doi:10.4171/GGD/506.

\bibitem{CCK} T. Ceccherini-Silberstein, M. Coornaert, F. Krieger, \emph{An analogue of Fekete's lemma for subadditive functions on cancellative amenable semigroups}, J. Anal. Math. 124 (2014) 59--81.


\bibitem{Con} J. P. Conze, \emph{Entropie d'un groupe ab\'{e}lien de transformations}, Z. Wahrscheinlichkeitstheorie und verwandte Gebiete 25 (1972) 11--30.

\bibitem{CP} M. I. Cortez, S. Petite, \emph{Invariant measures and orbit equivalence for generalized Toeplitz subshifts}, Groups Geom. Dyn. 8 (2014) 1007--1045.

\bibitem{CT} N. Chung, A. Thom, \emph{Some remarks on the entropy for algebraic actions of amenable groups}, Trans. Amer. Math. Soc. 367 (2015) 8579--8595. 

\bibitem{Dan} A. Danilenko, {\em Amenable groups are finitileable. Dynamical proof}, arXiv:1503.01568v1.



\bibitem{DFG-amac}  D. Dikranjan, A. Fornasiero, A. Giordano Bruno, \emph{The algebraic entropy of amenable semigroup actions}, submitted.
\bibitem{DFG} D. Dikranjan, A. Fornasiero, A. Giordano Bruno, \emph{Entropy of generalized shifts and related topics}, preprint.

\bibitem{DGB} D. Dikranjan, A. Giordano Bruno, \emph{The Pinsker subgroup of an algebraic flow}, J. Pure Appl. Algebra (2012) 364--376.
\bibitem{DGBpak} D. Dikranjan, A. Giordano Bruno, \emph{Topological and algebraic entropy on groups}, Proceedings Islamabad ICTA 2011, Cambridge Scientific Publishers 2012, 133--214.
\bibitem{DGB1} D. Dikranjan, A. Giordano Bruno, \emph{The connection between topological and algebraic entropy}, Topology Appl. 159 (2012) 2980--2989.
\bibitem{DGB2} D. Dikranjan, A. Giordano Bruno, \emph{The Bridge Theorem for totally disconnected LCA groups}, Topology Appl. 169 (2014) 21--32.
\bibitem{DGB0} D. Dikranjan, A. Giordano Bruno, \emph{Entropy on abelian groups}, Adv. Math. 298 (2016) 612--653.
\bibitem{DGSZ} D. Dikranjan, B. Goldsmith, L. Salce, P. Zanardo, \emph{Algebraic entropy for abelian groups}, Trans. Amer. Math. Soc. 361 (2009) 3401--3434.

\bibitem{Dik+Manolo} D. Dikranjan, M. Sanchis, \emph{Bowen's entropy for endomorphisms of totally bounded abelian Groups}, Descriptive Topology and Functional Analysis, Springer Proceedings in Mathematics \& Statistics, Volume 80 (2014) 143--162.

\bibitem{Din}  E. Dinaburg, \emph{On the relations among various entropy characteristics of dynamical systems}, Izv. Akad. Nauk SSSR 35 (1971) 324--366. 


\bibitem{DH} T. Downarowicz, D. Huczek,  {\em Dynamical quasitilings of amenable groups}, Bull. Pol. Acad. Sci. Math. 66 (2018) 45--55.
\bibitem{DHZ} T. Downarowicz, D. Huczek, G. Zhang, \emph{Tilings of amenable groups}, J. Reine Angew. Math. 747 (2019) 277--298.

\bibitem{Ebli} S. Ebli, \emph{On coarse tilings for actions of discrete groups}, Universit\`a di Padova, Master Thesis, 2016.


\bibitem{Fuchs} L. Fuchs, \emph{Abelian groups}, Springer Monographs in Mathematics, 2015.

\bibitem{GB} A. Giordano Bruno, \emph{A Bridge Theorem for the entropy of semigroup actions}, submitted.
\bibitem{GBSp} A. Giordano Bruno, P. Spiga, \emph{Some properties of the growth and of the algebraic entropy of group endomorphisms}, J. Group Theory 20 (4) (2017) 763--774.
\bibitem{GBSp1} A. Giordano Bruno, P. Spiga, \emph{Milnor-Wolf Theorem for group endomorphisms}, J. Algebra 546 (2020) 85--118.
\bibitem{GBSal} A. Giordano Bruno, F. Salizzoni, \emph{Additivity of the algebraic entropy for locally finite groups with permutable finite subgroups}, submitted.



\bibitem{KW} Y. Katznelson, B. Weiss, \emph{Commuting measure-preserving transformations}, Israel J. Math. 12 (1972) 161--173.
\bibitem{KL} D. Kerr, H. Li, \emph{Erogidic Theory, Independence and dichotomies}, Springer Monographs in Mathematics, Springer, Cham, 2016.

\bibitem{Li} H. Li, \emph{Compact group automorphisms, addition formulas and Fuglede-Kadison determinants}, Ann. of Math. (2) 176 (2012) 303--347. 
\bibitem{LL} H. Li, B. Liang, \emph{Sofic mean length}, Adv. Math. 353 (2019) 802--858. 

\bibitem{LSW} D. Lind, K. Schmidt, T. Ward, \emph{Mahler measure and entropy for commuting automorphisms of compact groups}, Invent. Math. 101 (1990) 593--629. 
\bibitem{N} I. Namioka, \emph{F\o lner's conditions for amenable semigroups}, Math. Scand. 15 (1964) 18--28.

\bibitem{Oll} J.M. Ollagnier, \emph{Ergodic theory and statistical mechanics}, Lecture Notes in Math. 1115, Springer Verlag, Berlin-Heidelberg-New York, 1985.

\bibitem{OW0} D. Ornstein, B. Weiss, \emph{Ergodic theory of amenable group actions I: The Rokhlin lemma}, Bull. Amer. Math. Soc. (N.S.) 2 (1980) 161--164.

\bibitem{OW} D. Ornstein, B. Weiss, \emph{Entropy and isomorphism theorems for actions of amenable groups}, J. Analyse Math. 48 (1987) 1--141.

\bibitem{P1} J. Peters, \emph{Entropy on discrete abelian groups}, Adv. Math. 33 (1979) 1--13.
\bibitem{P2} J. Peters, \emph{Entropy of automorphisms on LCA groups}, Pacific J. Math. 96 (1981) 475--488.

\bibitem{SVV} L. Salce, P. V\'amos, S. Virili, \emph{Length functions, multiplicities and algebraic entropy}, Forum Math. 25 (2013) 255--282.
\bibitem{SZ1} L. Salce, P. Zanardo, \emph{A general notion of algebraic entropy and the rank-entropy}, Forum Math. 21 (2009) 579--599.
\bibitem{SV} L. Salce, S. Virili, \emph{The addition theorem for algebraic entropies induced by non-discrete length functions}, Forum Math. 28 (2016) 1143--1157. 

\bibitem{Flavio} F. Salizzoni, \emph{Algebraic entropy of amenable semigroup actions}, Master Thesis, University of Udine, 2019.



\bibitem{V1} S. Virili, \emph{Entropy for endomorphisms of LCA groups}, Topology Appl. 159 (2012) 2546--2556.
\bibitem{V3} S. Virili, \emph{Algebraic entropy of amenable group actions}, Math. Z. 291 (2019) 1389--1417.
\bibitem{V2} S. Virili, \emph{Algebraic and topological entropy of group actions}, preprint.

\bibitem{Weiss} B. Weiss, \emph{Monotileable amenable groups}, Topology, ergodic theory, real algebraic geometry, 257--262, Amer. Math. Soc. Transl. Ser. 2, 202, Adv. Math. Sci., 50, Amer. Math. Soc., Providence, RI, 2001.


\bibitem{W} M. D. Weiss, \emph{Algebraic and other entropies of group endomorphisms}, Math. Systems Theory 8 (3) (1974/75) 243--248.

\bibitem{Y} S. Yuzvinski, \emph{Metric properties of endomorphisms of compact groups}, Izv. Acad. Nauk SSSR, Ser. Mat. 29 (1965) 1295--1328.

}

\end{document}